\numberwithin{equation}{section}
\declaretheoremstyle[
  bodyfont=\normalfont\itshape,
  headformat=\NAME\ \NUMBER\NOTE,
]{myplain}
\declaretheoremstyle[
  headformat=\NAME\ \NUMBER\NOTE,
]{mydefinition}
\newcommand{\envqed}{{\lower-0.3ex\hbox{$\triangleleft$}}}
\declaretheorem[style=myplain,numberwithin=section]{theorem}
\declaretheorem[style=myplain,numberlike=theorem]{lemma}
\declaretheorem[style=myplain,numberlike=theorem]{corollary}
\declaretheorem[style=mydefinition,numberlike=theorem,qed=\envqed]{definition}
\declaretheorem[style=mydefinition,numberlike=theorem,qed=\envqed]{remark}
\declaretheorem[style=mydefinition,numberlike=theorem,qed=\envqed]{example}
\let\epsilon\varepsilon
\let\phi\varphi
\let\rho\varrho
\newcommand{\cf}[0]{{cf.\@}\xspace}
\newcommand{\eg}[0]{{e.g.\@}\xspace}
\newcommand{\ie}[0]{{i.e.\@}\xspace}
\renewcommand{\O}{\mathcal{O}}
\newcommand{\e}{\mathrm{e}}
\newcommand{\N}{\mathbb{N}}
\newcommand{\R}{\mathbb{R}}
\newcommand{\scp}[2]{\left\langle{#1,\, #2}\right\rangle}
\newcommand{\I}{\operatorname{I}}
\DeclarePairedDelimiterX\newset[1]\lbrace\rbrace{\setaux #1||\endsetaux}
\def\setaux#1|#2|#3\endsetaux{\if\relax\detokenize{#2}\relax #1 \else #1 \;\delimsize\vert\; #2 \fi}
\renewcommand{\set}[1]{\newset*{#1}}
\newcommand{\dt}{\Delta t}
\renewcommand{\vec}[1]{\pmb{#1}}
\newcommand{\mat}[1]{\vec{#1}}
\NewDocumentCommand{\D}{m+g}{%
  \IfNoValueTF{#2}
    {D_{#1}}
    {D_{#1,#2}}%
}
\newcommand{\diag}{\operatorname{diag}}
\NewDocumentCommand{\A}{m+g}{%
  \IfNoValueTF{#2}
    {A_{#1}}
    {A_{#1,#2}}%
}
\NewDocumentCommand{\M}{g}{%
  \IfNoValueTF{#1}
    {M}
    {M_{#1}}%
}
\NewDocumentCommand{\proj}{g}{%
  \IfNoValueTF{#1}
    {P}
    {P_{#1}}%
}
\NewDocumentCommand{\eL}{g}{%
  \IfNoValueTF{#1}
    {\vec{e}_{L}}
    {\vec{e}_{L,#1}}%
}
\NewDocumentCommand{\eR}{g}{%
  \IfNoValueTF{#1}
    {\vec{e}_{R}}
    {\vec{e}_{R,#1}}%
}
\NewDocumentCommand{\dL}{g}{%
  \IfNoValueTF{#1}
    {\vec{d}_{L}}
    {\vec{d}_{L,#1}}%
}
\NewDocumentCommand{\dR}{g}{%
  \IfNoValueTF{#1}
    {\vec{d}_{R}}
    {\vec{d}_{R,#1}}%
}
\newcommand{\fnum}{f^\mathrm{num}}
\newcommand{\dx}{\Delta x}
\newcommand{\xmin}{x_\mathrm{min}}
\newcommand{\xmax}{x_\mathrm{max}}
\newcommand{\orcid}[1]{ORCID:~\href{https://orcid.org/#1}{#1}}
\newenvironment{keywords}{\par\textbf{Key words.}}{\par}
\newenvironment{AMS}{\par\textbf{AMS subject classification.}}{\par}
\title{A Broad Class of Conservative Numerical Methods for Dispersive Wave Equations}
\author[1]{Hendrik Ranocha\thanks{\orcid{0000-0002-3456-2277}}}
\author[2]{Dimitrios Mitsotakis\thanks{\orcid{0000-0003-2700-6093}}}
\author[1]{David I. Ketcheson\thanks{\orcid{0000-0002-1212-126X}}}
\affil[1]{%
King Abdullah University of Science and Technology (KAUST),
Computer Electrical and Mathematical Science and Engineering Division (CEMSE),
Thuwal, 23955-6900, Saudi Arabia}
\affil[2]{%
School of Mathematics and Statistics, Victoria University of Wellington,
Wellington 6140, New Zealand}
\date{September 14, 2020} 
\begin{document}

\maketitle

\begin{abstract}
  We develop a general framework for designing conservative numerical methods
based on summation by parts operators and split forms in space, combined
with relaxation Runge-Kutta methods in time.  We apply this framework to create
new classes of fully-discrete conservative methods for several nonlinear
dispersive wave equations: Benjamin-Bona-Mahony (BBM), Fornberg-Whitham,
Camassa-Holm, Degasperis-Procesi, Holm-Hone, and the BBM-BBM system. These full
discretizations conserve all linear invariants and one nonlinear invariant for
each system. The spatial semidiscretizations include finite difference,
spectral collocation, and both discontinuous and continuous finite element
methods. The time discretization is essentially explicit, using relaxation
Runge-Kutta methods.  We implement some specific schemes from among the
derived classes, and demonstrate their favorable properties through numerical
tests.

\end{abstract}

\begin{keywords}
  invariant conservation,
  summation by parts,
  finite difference methods,
  spectral collocation methods,
  continuous Galerkin methods,
  discontinuous Galerkin methods,
  relaxation schemes
\end{keywords}

\begin{AMS}
  65M12,  
  65M70,  
  65M06,  
  65M60,  
  65M20,  
  35Q35   
\end{AMS}

\section{Introduction}
\label{sec:introduction}

In this work we study and develop numerical discretizations for nonlinear
dispersive wave equations.  One of the most important features of such equations
is the existence of nonlinear invariants.  In addition to the total mass,
many dispersive wave models possess other invariants that may represent
the energy or another important physical quantity.
Perhaps the most interesting feature of these systems,
related to the presence of conserved quantities, is the existence of solitary
wave solutions.  For non-integrable systems, numerical methods are an essential
tool for studying solitary waves; even for integrable systems, numerical methods
are very useful for exploring solution behavior \cite{antonopoulos2019error}.  Both analysis and numerical
experiments have demonstrated that such studies are best undertaken
using numerical methods that exactly preserve the invariants of
the system in question \cite{araujo2001error,duran2003conservative}.
Specifically, conservative methods possess discrete solitary wave solutions that
accurately approximate the true solitary waves, with an amplitude that
is constant in time and a phase error that grows linearly in time
\cite{frutos1997accuracy}.
In contrast, non-conservative methods typically yield discrete solutions
with amplitude errors that grow linearly in time and (therefore) phase errors
that grow quadratically in time.  Conservative methods are thus especially
desirable for conducting studies of solitary wave properties such as
speed-amplitude relationships and solitary wave interactions
\cite{duran2003conservative}, and for long-time simulations.
At the same time, discrete conservation properties can be useful
for proving numerical stability.

Significant work has been devoted to the development of conservative
methods for certain nonlinear dispersive wave equations
\cite{sanzserna1982explicit,sanzserna1983method,duran2000numerical,
duran2003conservative,cai2016geometric,yang2018new,zhang2020dissipative,buli2018local,bona2018finite,winther1980conservative,duran2019multi,duran2019multi2,zhu2011multi}.  Nevertheless, and
despite their known advantages, conservative fully-discrete schemes
are not widely available for many important dispersive nonlinear wave
equations, and most methods being proposed and used are non-conservative;
see e.g. \cite{bona1985numerical,wei_kirby_grilli_subramanya_1995,
pelloni2001numerical,dutykh2013finite,dutykh2013serre,antonopoulos2019error,bona1998boussinesq,eilbeck1975numerical,antonopoulos2010galerkin,walkley1999finite,winther1982finite}.
Indeed, the development of accurate and stable schemes (even without
nonlinear invariant conservation) is a challenging task and often
requires the application of implicit time discretizations \cite{bona1995conservative,yan2002local,dutykh2013finite}.

Usually, proving the conservation of invariants of dispersive partial
differential equations (PDEs) at the continuous
level requires application of the product/chain rule and integration by parts.
To mimic this procedure at the semidiscrete level (discrete in space, continuous
in time), summation by parts (SBP) operators are used, which provide a discrete
analogue of integration by parts. A review of the relevant theory can be found
in \cite{svard2014review, fernandez2014review, chen2020review}.
Nowadays, many different schemes have been formulated in the SBP framework,
\eg finite difference \cite{strand1994summation},
finite volume \cite{nordstrom2001finite, nordstrom2003finite},
discontinuous Galerkin \cite{gassner2013skew},
and flux reconstruction methods \cite{ranocha2016summation}.
At internal interfaces or external boundaries, SBP methods can be combined
with a weak imposition of interface/boundary conditions using so-called
simultaneous approximation terms (SATs) to bound the energy
of the semidiscretization \cite{carpenter1994time, carpenter1999stable}.

Since the chain and product rules cannot hold discretely for many high-order
discretizations \cite{ranocha2019mimetic},
split forms that preserve local conservation laws are used; \cf
\cite{fisher2013discretely}. These are related to entropy-conservative
methods in the sense of Tadmor \cite{tadmor1987numerical, lefloch2002fully,
fisher2013high, ranocha2018comparison}. Although the idea to use split forms
is not exactly new \cite[eq. (6.40)]{richtmyer1967difference}, it is still
state of the art and enables the construction of numerical methods with
desirable properties \cite{gassner2016split}. Conservative discretizations based
on classical finite
element methods require the exact integration of nonlinear
terms, which can become very costly or even impossible for non-polynomial
nonlinearities. Conservative methods based on split forms do not require exact
integration, so they can both be cheaper and result in better
stability properties \cite{winters2018comparative}.

All of the previously existing conservative numerical methods such as
\cite{zhang2020dissipative, liu2016invariantFD, liu2016invariantDG,
hong2019linear, xia2014fourier} are constructed
using ad hoc techniques tailored specifically to both the equation and the
numerical method.
Typically, the ideas used to construct one such scheme cannot
immediately be adapted to another equation or type of discretization.
In contrast, we propose a unifying spatial discretization framework based on
SBP operators.  We first establish general technical
results and then apply these to concrete physical models, obtaining a set
of necessary algebraic conditions for conservative semidiscretizations. These
conditions can be satisfied by numerical schemes from any of the classes included in
the unifying SBP framework.

To transfer the semidiscrete conservation results to fully-discrete schemes,
the recent relaxation approach is used \cite{ketcheson2019relaxation,
ranocha2020relaxation, ranocha2020relaxationHamiltonian, ranocha2020general,
ranocha2020fully}. First ideas for such techniques date back to
\cite{sanzserna1982explicit, sanzserna1983method} and
\cite[pp. 265--266]{dekker1984stability} but have been developed widely just
recently.

The numerical methods developed and studied in this article are implemented in
Julia \cite{bezanson2017julia}, using the time integration schemes of
DifferentialEquations.jl \cite{rackauckas2017differentialequations} and
Matplotlib for the plots \cite{hunter2007matplotlib}. The source code for
all numerical methods and the experiments is available online
\cite{ranocha2020broadRepro}.

This article is structured as follows. Firstly, the concept of SBP operators
is recalled in Section~\ref{sec:SBP} and some technical results are provided
that will be applied later to prove the discrete conservation properties.
Afterwards, the relaxation approach in time is briefly summarized in
Section~\ref{sec:relaxation}.
Having established the framework of numerical methods, we concentrate on the
Benjamin-Bona-Mahony (Section~\ref{sec:bbm}),
Fornberg-Whitham (Section~\ref{sec:fw}),
Camassa-Holm (Section~\ref{sec:ch}),
Degasperis-Procesi (Section~\ref{sec:dp}),
and Holm-Hone (Section~\ref{sec:hh}) equations as well as the
BBM-BBM system (Section~\ref{sec:bbm_bbm}).
For each dispersive wave model, conservative numerical methods are constructed
and tested in some numerical experiments. Finally, we summarize the development
and provide an outlook on future research in Section~\ref{sec:summary}.

\section{Summation by parts operators}
\label{sec:SBP}

In this section, periodic and non-periodic SBP operators are introduced
at first in a general way. Afterwards, several examples of classical
schemes are rephrased as SBP schemes. While there are generalizations of
SBP methods \cite{ranocha2016summation, ranocha2017extended,
ranocha2018generalised, chan2019efficient, chan2019skew}, we concentrate
here on nodal collocation schemes where the boundary points are included.
Thus, an interval $[\xmin, \xmax]$ is discretized using a grid\footnote{Here we include the possibility of repeated nodes in order to accommodate DG meshes in the most natural form.}
$\vec{x} = (\vec{x}_1, \dots, \vec{x}_N)^T$, where
$\xmin = \vec{x}_1 \le \vec{x}_2 \le \dots \le \vec{x}_N = \xmax$.
A function $u\colon [\xmin, \xmax] \to \R$ is represented discretely on
the grid $\vec{x}$ by its nodal values $\vec{u} = (\vec{u}_1, \dots, \vec{u}_N)^T$,
where $\vec{u}_i = u(\vec{x}_i)$. Multiplication of discrete grid functions
$\vec{u}, \vec{v}$ is performed pointwise, \ie $(\vec{uv})_i = \vec{u}_i \vec{v}_i$.

\subsection{First-derivative operators}

The idea of summation by parts operators is to mimic integration by parts.
Hence, compatible derivative and integration/quadrature operators are
necessary \cite{kreiss1974finite, strand1994summation}.

\begin{definition}
  Given a grid $\vec{x}$, a \emph{$p$-th order accurate $i$-th
  derivative matrix} $\D{i}$ is a matrix that satisfies
  \begin{equation}
    \forall k \in \{0, \dots, p\}\colon \quad
      \D{i} \vec{x}^k = k (k-1) \dots (k-i+1) \vec{x}^{k-i},
  \end{equation}
  with the convention $\vec{x}^0 = \vec{1}$ and $0 \vec{x}^k = \vec{0}$.
  We say $\D{i}$ is consistent if $p \ge 0$.
\end{definition}

We will make frequent use of the vectors
\begin{equation}
  \eL = (1, 0, \dots, 0)^T, \quad
  \eR = (0, \dots, 0, 1)^T,
\end{equation}
in order to evaluate grid functions at the left or right endpoint, respectively.

\begin{definition}
\label{def:D1-bounded}
  A \emph{first-derivative SBP operator}
  consists of
  a grid $\vec{x}$,
  a consistent first-derivative matrix $\D1$,
  and a symmetric and positive-definite matrix $\M$,
  such that
  \begin{align}
  \label{eq:D1-bounded}
    \M \D1 + \D1^T \M & = \eR \eR^T - \eL \eL^T.
  \end{align}
  We refer to $M$ as a mass matrix or norm matrix\footnote{The term mass matrix
  is common for finite element methods. In the finite difference SBP community,
  the name norm matrix is more common.}.
\end{definition}
First-derivative SBP operators mimic integration by parts via
\begin{equation}
\label{eq:SBP-D1-IBP}
\begin{array}{*3{>{\displaystyle}c}}
  \underbrace{
    \vec{u}^T \M \D1 \vec{v}
    + \vec{u}^T \D1^T \M \vec{v}
  }
  & = &
  \underbrace{
    \vec{u}^T \eR \eR^T \vec{v} - \vec{u}^T \eL \eL^T \vec{v},
  }
  \\
  \rotatebox{90}{$\!\approx\;$}
  &&
  \rotatebox{90}{$\!\!\approx\;$}
  \\
  \overbrace{
    \int_{\xmin}^{\xmax} u \, (\partial_x v)
    + \int_{\xmin}^{\xmax} (\partial_x u) \, v
  }
  & = &
  \overbrace{
    u(\xmax) v(\xmax) - u(\xmin) v(\xmin)
  }.
\end{array}
\end{equation}
Of course, integration by parts requires some smoothness at the continuous
level, \eg absolute continuity of $u, v$. Such minimal smoothness assumptions
are often used for formal a priori estimate. The purpose of SBP operators
is to enable such a priori estimates also at the discrete level.

In the case of periodic boundary conditions (under which $\xmin$ and $\xmax$
are identical), the evaluations at the endpoints
of the domain cancel. Hence, a periodic SBP operator can be defined
as follows.
\begin{definition}
\label{def:D1-periodic}
  A \emph{periodic first-derivative SBP operator}
  consists of
  a grid $\vec{x}$,
  a consistent first-derivative matrix $\D1$,
  and a symmetric and positive-definite matrix $\M$
  such that
  \begin{equation}
  \label{eq:D1-periodic}
    \M \D1 + \D1^T \M = 0.
  \end{equation}
\end{definition}
We will often refer to an operator $\D{i}$ as a (periodic) SBP operator
if the other operators (such as the mass matrix $\M$) are clear from the context.
We always assume derivative operators are consistent, but we will
usually omit this term.

In periodic domains, first-derivative SBP operators are associated with
skew-symmetric differentiation matrices. Hence, they are usually
energy-conservative for linear hyperbolic problems. To allow for
energy-dissipative SBP methods, upwind operators can be used, \cf
\cite{mattsson2017diagonal, mattsson2018compatible}.
\begin{definition}
\label{def:D1-upwind-bounded}
  A \emph{first-derivative upwind SBP operator}
  consists of
  a grid $\vec{x}$,
  consistent first-derivative matrices $\D1{\pm}$,
  and a symmetric and positive-definite matrix $\M$,
  such that
  \begin{equation}
  \label{eq:D1-upwind-bounded}
    \M \D1{+} + \D1{-}^T \M = \eR \eR^T - \eL \eL^T,
    \quad
    \frac{1}{2} M (\D1{+} - \D1{-}) \text{ is negative semidefinite}.
  \end{equation}
\end{definition}

In matrix form, $\D1{+}$ is biased toward the upper-triangular part (\ie it has
more nonzero entries in the upper part than in the lower)
and $\D1{-}$ is biased toward the lower-triangular part.
The notion of upwind SBP operators can of course be extended to
periodic domains.
\begin{definition}
\label{def:D1-upwind-periodic}
  A \emph{periodic first-derivative upwind SBP operator}
  consists of
  a grid $\vec{x}$,
  consistent first-derivative matrices $\D1{\pm}$,
  and a symmetric and positive-definite matrix $\M$,
  such that
  \begin{equation}
  \label{eq:D1-upwind-periodic}
    \M \D1{+} + \D1{-}^T \M = 0,
    \quad
    \frac{1}{2} M (\D1{+} - \D1{-}) \text{ is negative semidefinite}.
  \end{equation}
\end{definition}

\begin{remark}
  If $\D1{\pm}$ are upwind SBP operators in a bounded or periodic domain,
  then $\D1 = \frac{1}{2} ( \D1{+} + \D1{-} )$ is a (central) SBP operator.
  Furthermore, we can trivially obtain an upwind SBP operator from any
  (central) SBP operator $\D1$ by taking $\D1{+} = \D1 = \D1{-}$.  Though we
  term it {\em upwind}, this latter operator is of course non-dissipative.  In
  general, upwind SBP operators introduce dissipation if $\D1{+} \neq \D1{-}$.
\end{remark}

\subsection{Second-derivative operators}

Similarly to first-derivative SBP operators, second-derivative operators
can be defined by mimicking integration by parts at the discrete level
\cite{mattsson2004summation, mattsson2008stable}.
\begin{definition}
\label{def:D2-bounded}
  A \emph{second-derivative SBP operator}
  consists of
  a grid $\vec{x}$,
  a consistent second-derivative matrix $\D2$,
  a symmetric and positive-definite matrix $\M$,
  and derivative vectors $\dL$, $\dR$ approximating the evaluation of the first
  derivative at the left/right endpoint as
  $\vec{d}_{L/R}^T \vec{u} \approx u'(x_{\mathrm{min}/\mathrm{max}})$,
  such that
  \begin{equation}
  \label{eq:D2-bounded}
    \M \D2 = -\A2 + \eR \dR^T - \eL \dL^T,
    \quad
    \A2 \text{ is symmetric and positive semidefinite}.
  \end{equation}
  First- and second-derivative SBP operators $\D1, \D2$ are
  said to be \emph{compatible} if they are based on the same mass matrix $\M$ and
  $-\A2 \le -\D1^T \M \D1$ (in the sense of the induced quadratic forms).
\end{definition}

\begin{remark}
  If SBP operators for different derivatives are applied in the
  same context, it will be assumed that they have the same mass
  matrix $\M$.
\end{remark}

Second-derivative SBP operators mimic integration by parts via
\begin{equation}
\label{eq:SBP-D2-IBP}
\begin{array}{*5{>{\displaystyle}c}}
  \underbrace{
    \vec{u}^T \M \D2 \vec{v}
  }
  & = &
  \underbrace{
    -\vec{u}^T \A2 \vec{v}
  }
  & + &
  \underbrace{
    \vec{u}^T \eR \dR^T \vec{v} - \vec{u}^T \eL \dL^T \vec{v},
  }
  \\
  \rotatebox{90}{$\!\approx\;$}
  &&
  \rotatebox{90}{$\!\!\approx\;$}
  &&
  \rotatebox{90}{$\!\!\approx\;$}
  \\
  \overbrace{
    \int_{\xmin}^{\xmax} u \, (\partial_x^2 v)
  }
  & = &
  - \int_{\xmin}^{\xmax} (\partial_x u) (\partial_x v)
  & + &
  \overbrace{
    u(\xmax) \partial_x v(\xmax) - u(\xmin) \partial_x v(\xmin)
  }.
\end{array}
\end{equation}

In periodic domains, the boundary terms vanish again, resulting in
the following
\begin{definition}
\label{def:D2-periodic}
  A \emph{periodic second-derivative SBP operator}
  consists of
  a grid $\vec{x}$,
  a consistent second-derivative matrix $\D2$,
  and a symmetric and positive-definite matrix $\M$
  such that
  \begin{equation}
  \label{eq:D2-periodic}
    \M \D2 = -\A2,
    \quad
    \A2 \text{ is symmetric and positive semidefinite}.
  \end{equation}
\end{definition}

One way to obtain a second-derivative SBP operator is to
square a first-derivative operator: $\D2 = \D1^2$.
In the context of finite difference methods, the resulting
operator is known as a wide-stencil operator.
Compatibility of first- and second-derivative operators means that
for any other choice of $\D2$ we have
$- \vec{u}^T \A2 \vec{u} \le \vec{u}^T \D1^T \M \D1 \vec{u}$, \ie
the wide-stencil operator is the least dissipative (for the heat
equation) of all compatible operators.
In periodic
domains with an even number of nodes, the highest frequency of
grid oscillations is mapped to zero by such wide-stencil operators.
To be able to damp such grid oscillations, it is preferable to
use narrow-stencil operators \cite{mattsson2004summation}.
For first-derivative upwind SBP operators $\D1{\pm}$, both
$\D1{+} \D1{-}$ and $\D1{-} \D1{+}$ are second-derivative SBP
operators.

\subsection{Fourth-derivative operators}

Finally, fourth-derivative SBP operators can be defined as follows
\cite{mattsson2014diagonal}.
\begin{definition}
\label{def:D4-bounded}
  A \emph{fourth-derivative SBP operator}
  consists of
  a grid $\vec{x}$,
  a consistent fourth-derivative matrix $\D4$,
  a symmetric and positive-definite matrix $\M$,
  and derivative vectors $\dL{i}^T$, $\dR{i}^T$ approximating the evaluation
  of the $i$th derivative at the left/right endpoint,
  such that
  \begin{equation}
  \label{eq:D4-bounded}
    \M \D2 = \A4 + \eR \dR{3}^T - \eL \dL{3}^T - \dR{1} \dR{2}^T + \dL{1} \dL{2}^T,
    \quad
    \A4 \text{ is symmetric and positive semidefinite}.
  \end{equation}
\end{definition}

Fourth-derivative SBP operators mimic integration by parts via
\begin{equation}
\label{eq:SBP-D4-IBP}
\begin{array}{*5{>{\displaystyle}c}}
  \underbrace{
    \vec{u}^T \M \D4 \vec{v}
  }
  & = &
  \underbrace{
    \vec{u}^T \A4 \vec{v}
  }
  & + &
  \underbrace{
    \vec{u}^T (\eR \dR{3}^T - \eL \dL{3}^T - \dR{1} \dR{2}^T + \dL{1} \dL{2}^T) \vec{v},
  }
  \\
  \rotatebox{90}{$\!\approx\;$}
  &&
  \rotatebox{90}{$\!\!\approx\;$}
  &&
  \rotatebox{90}{$\!\!\approx\;$}
  \\
  \overbrace{
    \int_{\xmin}^{\xmax} u \, (\partial_x^4 v)
  }
  & = &
  \int_{\xmin}^{\xmax} (\partial_x^2 u) (\partial_x^2 v)
  & + &
  \overbrace{
    \bigl(
      u (\partial_x^3 v) - (\partial_x u) ( \partial_x^2 v) \bigr) \big|_{\xmin}^{\xmax}
  }.
\end{array}
\end{equation}

In periodic domains, the boundary terms vanish again, resulting in
the following
\begin{definition}
\label{def:D4-periodic}
  A \emph{periodic fourth-derivative SBP operator}
  consists of
  a grid $\vec{x}$,
  a consistent fourth-derivative matrix $\D4$,
  and a symmetric and positive-definite matrix $\M$
  such that
  \begin{equation}
  \label{eq:D4-periodic}
    \M \D2 = \A4,
    \quad
    \A4 \text{ is symmetric and positive semidefinite}.
  \end{equation}
\end{definition}

\subsection{Finite difference and collocation methods}
\label{sec:FD-collocation}

Classical central finite difference methods result in periodic SBP
operators with mass matrix
\begin{equation}
  M = \dx \I,
\end{equation}
where $\dx = (\xmax - \xmin) / N$ is the grid spacing.
Similarly, Fourier collocation methods
\cite{kreiss1972comparison, fornberg1975fourier}
and wavelet collocation methods \cite{jameson1993wavelet}
yield periodic SBP operators with the same mass matrix as central finite
difference methods.

The first high-order finite difference SBP operators on bounded domains
were proposed in \cite{strand1994summation} and later developed
in several articles, \eg \cite{mattsson2004summation}. Usually, these operators
are associated with a uniform grid although some optimized versions
employ adapted grid nodes near the boundaries \cite{mattsson2018boundary}.

Another class of SBP methods with diagonal mass matrix are spectral methods
using nodal Lobatto-Legendre bases for polynomials of degree $p$
\cite{carpenter1996spectral, gassner2013skew}.
These schemes result in SBP operators with a diagonal mass matrix
$\M = \diag(\omega_0, \dots, \omega_p)$, where $\omega_i$ are
the Lobatto-Legendre quadrature weights. The associated grid $\vec{x}$ is given
by the Lobatto-Legendre quadrature nodes.

In periodic domains, finite difference schemes are usually applied globally.
If multi-block finite difference or spectral collocation methods shall be used,
they have to be constructed using (non-periodic) SBP operators on each element.
Then, the elements have to be coupled either in a discontinuous way using
SATs as described in Section~\ref{sec:DG} or continuously as described in
Section~\ref{sec:CG}.

\subsection{Nodal discontinuous Galerkin methods}
\label{sec:DG}

Multiple first-derivative SBP operators on bounded domains can be coupled
in a discontinuous finite element way via SATs to construct global SBP
operators, \cf
\cite{chan2018discretely, chan2019efficient, fernandez2019extension}.
The construction and the corresponding proof are reproduced here for
completeness and convenience of the reader. Here and in the following,
we consider only the coupling of two SBP operators on adjacent grids
$\vec{x}_{l/r}$, where $\vec{x}_l = (\vec{x}_{i,l})_{i=1}^{N_l}$ is the
grid on the left- and $\vec{x}_r = (\vec{x}_{i,r})_{i=1}^{N_r}$ is the
grid on the right-hand side. Additionally, these grids have one node location
in common: $\vec{x}_{N_l,l} = \vec{x}_{1,r}$.

\begin{theorem}
\label{thm:D1-DG}
  Consider two first-derivative SBP operators $\D1{l/r}$ on the grids
  $\vec{x}_{l/r}$ with $x_{N_l,l} = x_{1,r}$. Then,
  \begin{equation}
  \label{eq:D1-DG}
    \D1 =
    \begin{pmatrix}
      \D1{l} - \frac{1}{2} \M{l}^{-1} \eR{l} \eR{l}^T &
      \frac{1}{2} \M{l}^{-1} \eR{l} \eL{r}^T \\
      -\frac{1}{2} \M{r}^{-1} \eL{r} \eR{l}^T &
      \D1{r} + \frac{1}{2} \M{r}^{-1} \eL{r} \eL{r}^T
    \end{pmatrix},
    \quad
    \M =
    \begin{pmatrix}
      \M{l} & 0 \\
      0 & \M{r}
    \end{pmatrix},
  \end{equation}
  yields a first-derivative SBP operator on the joint grid $\vec{x} =
  (\vec{x}_{1,l}, \dots, \vec{x}_{N_l,l}, \vec{x}_{1,r}, \dots, \vec{x}_{N_r,r})^T$
  with $N = N_l + N_r$ grid nodes.
  This SBP operator has the same order of accuracy as the less accurate
  operator of $\D1{l/r}$.
\end{theorem}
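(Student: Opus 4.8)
The plan is to verify, in order, that the pair $(\D1,\M)$ defined in \eqref{eq:D1-DG} satisfies all three requirements of Definition~\ref{def:D1-bounded} on the joint grid, and then to read off the order of accuracy. The mass matrix $\M$ is block diagonal with the symmetric positive-definite blocks $\M{l},\M{r}$, so it is immediately symmetric and positive-definite; this also gives $\D1^T\M = (\M\D1)^T$, which I will use to shorten the SBP computation to forming $\M\D1 + (\M\D1)^T$.

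For the SBP relation I would multiply out $\M\D1$ in block form. Because $\M$ is block diagonal, the factors $\M{l}^{-1},\M{r}^{-1}$ inside the interface (SAT) terms cancel, leaving the blocks $\M{l}\D1{l} - \tfrac12\eR{l}\eR{l}^T$, $\tfrac12\eR{l}\eL{r}^T$, $-\tfrac12\eL{r}\eR{l}^T$, and $\M{r}\D1{r} + \tfrac12\eL{r}\eL{r}^T$. Adding $(\M\D1)^T$, the two off-diagonal blocks $\tfrac12\eR{l}\eL{r}^T$ and $-\tfrac12\eL{r}\eR{l}^T$ are cancelled exactly by the transposed contributions, while the symmetric $-\tfrac12\eR{l}\eR{l}^T$ and $+\tfrac12\eL{r}\eL{r}^T$ terms double. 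Inserting the per-operator identities $\M{l}\D1{l} + \D1{l}^T\M{l} = \eR{l}\eR{l}^T - \eL{l}\eL{l}^T$ and its right-hand counterpart then collapses the diagonal blocks to $-\eL{l}\eL{l}^T$ and $\eR{r}\eR{r}^T$. Finally I identify the joint-grid boundary selectors as $\eL = (\eL{l}^T,\vec{0}^T)^T$ and $\eR = (\vec{0}^T,\eR{r}^T)^T$, so that $\eR\eR^T - \eL\eL^T$ has precisely these two diagonal blocks and vanishing off-diagonal blocks; this establishes \eqref{eq:D1-bounded}.

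Consistency and accuracy both rest on a single observation: since the shared node satisfies $x_{N_l,l} = x_{1,r}$, a grid function built from a smooth $u$ takes the same value there whether read from the left block via $\eR{l}^T$ or the right block via $\eL{r}^T$. Applying $\D1$ to $\vec{x}^k$, the interface corrections in the top block combine into $\tfrac12\M{l}^{-1}\eR{l}\,(\eL{r}^T\vec{x}_r^k - \eR{l}^T\vec{x}_l^k)$, a multiple of $(x_{1,r})^k - (x_{N_l,l})^k = 0$, and the same cancellation occurs in the bottom block. Hence $\D1\vec{x}^k$ reduces blockwise to $\D1{l}\vec{x}_l^k$ and $\D1{r}\vec{x}_r^k$. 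Taking $k=0$ yields $\D1\vec{1} = \vec{0}$, i.e.\ consistency; taking $0 \le k \le \min(p_l,p_r)$ and invoking the accuracy of each sub-operator gives $\D1\vec{x}^k = k\,\vec{x}^{k-1}$ on the joint grid, so the joint operator attains the order of the less accurate of $\D1{l/r}$, as claimed.

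The only genuine subtlety, and the step I would treat most carefully, is the bookkeeping of which physical node the interface vectors select on the joint grid together with the sign conventions that make the off-diagonal blocks cancel; everything else is routine block algebra. In particular it is worth emphasizing that the vanishing of the SAT terms on polynomials is exactly the statement that the two operators are evaluated at a geometrically coincident node — this is where the hypothesis $x_{N_l,l} = x_{1,r}$ is indispensable and precisely where it enters.
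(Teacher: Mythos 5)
Your proposal is correct and follows essentially the same route as the paper: verify the SBP property by block multiplication (where the off-diagonal SAT contributions cancel against their transposes and the per-element SBP identities collapse the diagonal blocks to $-\eL{l}\eL{l}^T$ and $\eR{r}\eR{r}^T$), then check accuracy by applying $\D1$ to monomials and observing that the interface terms vanish because the coupled operators are evaluated at the geometrically coincident node. The only difference is that you spell out the bookkeeping the paper leaves implicit.
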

\begin{proof}
  The SBP property \eqref{eq:D1-bounded} is satisfied since
  \begin{equation}
  \begin{aligned}
    \M \D1 + \D1^T \M
    &=
    \begin{pmatrix}
      \M{l} \D1{l} + \D1{l}^T \M{l} - \eR{l} \eR{l}^T &
      0 \\
      0 &
      \M{r} \D1{r} + \D1{r}^T \M{r} + \eL{r} \eL{r}^T
    \end{pmatrix}
    \\
    &=
    \begin{pmatrix}
      - \eL{l} \eL{l}^T &
      0 \\
      0 &
      \eR{r} \eR{r}^T
    \end{pmatrix}.
  \end{aligned}
  \end{equation}
  The order of accuracy can be checked by applying $\D1$ to a polynomial
  and noting that the interface terms vanish because of continuity of
  polynomials.
\end{proof}

\begin{remark}
  The derivative operator constructed in \eqref{eq:D1-DG} yields
  \begin{equation}
    \M \D1 \begin{pmatrix} \vec{u}_l \\ \vec{u}_r \end{pmatrix}
    =
    \begin{pmatrix}
      \M{l} \D1{l} \vec{u}_l
      + \eR{l} \left(
        \fnum\bigl( \eR{l}^T \vec{u}_l, \eL{r}^T \vec{u}_r \bigr)
        - \eR{l}^T \vec{u}_l
      \right) \\
      \M{r} \D1{r} \vec{u}_r
      - \eL{r} \left(
        \fnum\bigl( \eR{l}^T \vec{u}_l, \eL{r}^T \vec{u}_r \bigr)
        - \eL{r}^T \vec{u}_r
      \right),
    \end{pmatrix}
  \end{equation}
  where
  \begin{equation}
    \fnum(u_-, u_+)
    =
    \frac{u_- + u_+}{2}
  \end{equation}
  is the central numerical flux. This is the strong-form DG discretization
  on two elements using the central numerical flux between them and
  ignoring the other boundaries. For nodal DG methods on Lobatto-Legendre
  nodes, this strong form is equivalent to the prevalent weak form
  because of the SBP property \eqref{eq:D1-bounded},
  \cf \cite{kopriva2010quadrature}, which discretizes (ignoring the
  boundary at $x_{1,l}$)
  \begin{equation}
    - \int_{x_{1,l}}^{x_{N_l,l}} (\partial_x \phi_l) u
    + \phi_l(x_{N_l,l}) \fnum\bigl( u_-(x_{N_l,l}), u_+(x_{N_r,r}) \bigr)
  \end{equation}
  on the left element, where $\phi_l$ is a test function.
\end{remark}

\begin{remark}
  The nodal DG methods used in this article are constructed by coupling
  multiple elements/blocks using the SBP operator with diagonal mass matrix
  determined by Lobatto-Legendre quadrature
  \cite[Chapter~1]{kopriva2009implementing} discontinuously as described in
  Theorem~\ref{thm:D1-DG}. The resulting discretization is the discontinuous
  Galerkin spectral element method \cite{gassner2013skew}.
  For these methods, $\dx$ is the size (length) of one element.
\end{remark}

\begin{corollary}
\label{cor:D1-DG}
  Coupling first-derivative SBP operators discontinuously as described
  in Theorem~\ref{thm:D1-DG} on a periodic domain results in a
  periodic first-derivative SBP operator.
\end{corollary}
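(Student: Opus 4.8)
The plan is to realize the periodic operator as a bounded-domain operator carrying one extra, \enquote{wrap-around} interface coupling, and then to verify that this last coupling annihilates precisely the boundary terms that survive in the bounded case. First I would couple all interior interfaces by applying Theorem~\ref{thm:D1-DG} inductively: couple the first two elements, then couple the resulting operator to the third, and so on. Since the output of the theorem is again a first-derivative SBP operator and consecutive elements share an interface node, each step is a legitimate instance of the theorem. This yields a bounded-domain first-derivative SBP operator $(\D1, \M)$ on the full grid with $N = \sum_i N_i$ nodes satisfying
\begin{equation*}
  \M \D1 + \D1^T \M = \eR \eR^T - \eL \eL^T,
\end{equation*}
where $\eL, \eR$ now select the very first and the very last global node.

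Next I would impose periodicity by coupling $\xmax$ back to $\xmin$ using exactly the SAT structure of \eqref{eq:D1-DG}, with the right endpoint of the last element playing the role of the left operator's right boundary and the left endpoint of the first element playing the role of the right operator's left boundary. Concretely, this replaces $\D1$ by $\D1 + \M^{-1} C$ with
\begin{equation*}
  C = \tfrac{1}{2}\bigl( -\eR \eR^T + \eR \eL^T - \eL \eR^T + \eL \eL^T \bigr).
\end{equation*}
Using $(\M^{-1} C)^T \M = C^T$ together with the short identity $C + C^T = -\eR \eR^T + \eL \eL^T$, one then obtains
\begin{equation*}
  \M (\D1 + \M^{-1} C) + (\D1 + \M^{-1} C)^T \M
  = (\eR \eR^T - \eL \eL^T) + (C + C^T) = 0,
\end{equation*}
which is precisely the periodic SBP property \eqref{eq:D1-periodic}.

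It remains to check that the construction still produces a \emph{consistent} operator. Here I would note that $C \1 = \vec{0}$, since $\eL^T \1 = \eR^T \1 = 1$ makes the four contributions cancel in pairs; hence $(\D1 + \M^{-1} C)\1 = \vec{0}$ is preserved, and the order of accuracy is the minimum over the coupled elements, exactly as in Theorem~\ref{thm:D1-DG}.

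I expect the main obstacle to be the wrap-around step itself: unlike the interior couplings, it is not literally an instance of Theorem~\ref{thm:D1-DG}, because the two coupled nodes sit in opposite corners of the global matrix rather than in adjacent diagonal blocks. One therefore has to re-derive the cancellation directly and track the signs in $C$ carefully. The essential point—that the leftover boundary terms $\eR \eR^T$ and $-\eL \eL^T$ are exactly the ones removed by a single central-flux coupling between $\xmax$ and $\xmin$—mirrors the fact that on a periodic domain these endpoints represent the same physical point, so the net boundary flux vanishes.
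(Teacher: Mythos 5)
Your proof is correct and supplies exactly the argument the paper leaves implicit: the corollary is stated without proof, and the intended route is precisely to apply the SAT coupling of Theorem~\ref{thm:D1-DG} at every interface including the wrap-around one, where your computation $C + C^T = -\eR\eR^T + \eL\eL^T$ cancels the boundary terms surviving from the bounded-domain operator. The only caveat is your closing remark on the order of accuracy: Definition~\ref{def:D1-periodic} only requires consistency, which your check $C\vec{1}=\vec{0}$ establishes, whereas exactness on $\vec{x}^k$ for $k\ge 1$ generally fails at the wrap-around interface (the monomials are not periodic, so the interface terms no longer vanish \enquote{by continuity}) --- this does not affect the corollary as stated.
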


The discontinuously coupled first-derivative SBP operator $\D1$ in
\eqref{eq:D1-DG} can be squared to get a second-derivative SBP
operator. This corresponds to the first method of Bassi and Rebay
\cite{bassi1997high}, \cf \cite{arnold2002unified}.
In order to increase the order of accuracy for DG methods for
diffusive problems, the application of alternating upwind fluxes
has been proposed in \cite{cockburn1998local}, resulting in the
local DG (LDG) method \cite{xu2010local}, which is of the form
$\D2 = \D1{+} \D1{-}$ or $\D2 = \D1{-} \D1{+}$ with first-derivative
upwind SBP operators described in
\begin{theorem}
\label{thm:D1-upwind-DG}
  Consider two first-derivative upwind SBP operators $\D1{\pm, l/r}$
  on the grids
  $\vec{x}_{l/r}$ with $\vec{x}_{N_l,l} = \vec{x}_{1,r}$. Then,
  \begin{equation}
  \label{eq:D1-upwind-DG}
  \begin{gathered}
    \D1{+} =
    \begin{pmatrix}
      \D1{+,l} - \M{l}^{-1} \eR{l} \eR{l}^T &
      \M{l}^{-1} \eR{l} \eL{r}^T \\
      0 &
      \D1{+,r}
    \end{pmatrix},
    \quad
    \D1{-} =
    \begin{pmatrix}
      \D1{-,l} &
      0 \\
      - \M{r}^{-1} \eL{r} \eR{l}^T &
      \D1{-,r} + \M{r}^{-1} \eL{r} \eL{r}^T
    \end{pmatrix},
    \\
    \M =
    \begin{pmatrix}
      \M{l} & 0 \\
      0 & \M{r}
    \end{pmatrix},
  \end{gathered}
  \end{equation}
  yield first-derivative upwind SBP operators on the joint grid $\vec{x} =
  (\vec{x}_{1,l}, \dots, \vec{x}_{N_l,l}, \vec{x}_{1,r}, \dots, \vec{x}_{N_r,r})^T$
  with $N = N_l + N_r$ nodes.
  These operators have the same order of accuracy as the less accurate
  of the given operators.
\end{theorem}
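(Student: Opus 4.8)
The plan is to verify the two defining properties of a first-derivative upwind SBP operator from Definition~\ref{def:D1-upwind-bounded} for the constructed triple $(\D1{+}, \D1{-}, \M)$, together with the claimed order of accuracy. The overall structure mirrors the proof of Theorem~\ref{thm:D1-DG}: one computes the relevant combinations of the global operators blockwise, substitutes the local upwind SBP relations, and checks that the interface correction terms either cancel or combine favorably. Since the global mass matrix is block diagonal with blocks $\M{l}, \M{r}$, it is automatically symmetric and positive definite.

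For the SBP relation $\M\D1{+} + \D1{-}^T\M = \eR\eR^T - \eL\eL^T$, I would form $\M\D1{+}$ and $\D1{-}^T\M$ as $2\times2$ block matrices, using the factors $\M{l/r}^{-1}$ in the interface terms to cancel against the leading $\M{l/r}$. The off-diagonal blocks then vanish: the two top-right contributions $\pm\eR{l}\eL{r}^T$ cancel, and the bottom-left blocks are identically zero. The diagonal blocks reduce, via the local relation $\M{l}\D1{+,l} + \D1{-,l}^T\M{l} = \eR{l}\eR{l}^T - \eL{l}\eL{l}^T$ and its right-grid analogue, to $-\eL{l}\eL{l}^T$ and $\eR{r}\eR{r}^T$ respectively. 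Identifying $\eL$ with $\eL{l}$ (padded by zeros) and $\eR$ with $\eR{r}$ on the joint grid then yields exactly $\eR\eR^T - \eL\eL^T$.

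The main new ingredient, and the step I expect to be the crux, is the negative semidefiniteness of $\tfrac12 M(\D1{+} - \D1{-})$. Here I would compute $M(\D1{+} - \D1{-})$ blockwise and test it against an arbitrary vector $\vec{w} = (\vec{w}_l, \vec{w}_r)^T$. Writing $a = \eR{l}^T\vec{w}_l$ and $b = \eL{r}^T\vec{w}_r$ for the two interface traces, the diagonal blocks contribute $\vec{w}_l^T\bigl(\tfrac12\M{l}(\D1{+,l}-\D1{-,l})\bigr)\vec{w}_l + \vec{w}_r^T\bigl(\tfrac12\M{r}(\D1{+,r}-\D1{-,r})\bigr)\vec{w}_r$, which is $\le 0$ by the local upwind assumptions, while the remaining interface terms collect into $-\tfrac12 a^2 + ab - \tfrac12 b^2 = -\tfrac12(a-b)^2 \le 0$. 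The delicate point is the bookkeeping of the factors of $\tfrac12$ and the fact that the quadratic form sees only the symmetric part of the coupling terms, but once the interface contribution is recognized as a negative perfect square the claim follows immediately.

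Finally, for accuracy I would apply $\D1{\pm}$ to a monomial grid function $\vec{x}^k$ with $k$ at most the minimum order of the given operators. The local parts reproduce $k\vec{x}^{k-1}$, and each interface correction has the form $\M{l}^{-1}\eR{l}(\eL{r}^T\vec{x}_r^k - \eR{l}^T\vec{x}_l^k)$ (and symmetrically on the right), which vanishes because the shared node $\vec{x}_{N_l,l} = \vec{x}_{1,r}$ forces $\eR{l}^T\vec{x}_l^k = \eL{r}^T\vec{x}_r^k$. Hence the joint operators are consistent up to the smaller of the two local orders, completing the verification.
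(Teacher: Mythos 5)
Your proposal is correct and follows essentially the same route as the paper's proof: verify the coupled SBP relation blockwise using the local upwind SBP identities (with the off-diagonal interface terms cancelling), show negative semidefiniteness of $\M (\D1{+} - \D1{-})$ by splitting the quadratic form into the two local nonpositive contributions plus the interface term $-(\eR{l}^T \vec{u}_l - \eL{r}^T \vec{u}_r)^2 \le 0$, and check accuracy on polynomials using the shared interface node. The paper's version is merely terser, leaving the blockwise SBP verification and the polynomial accuracy check as routine.
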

\begin{proof}
  The upwind SBP property \eqref{eq:D1-upwind-bounded} can be verified
  by applying it for each operator $\D1{\pm, l/r}$. Moreover,
  \begin{equation}
  \begin{split}
    \begin{pmatrix} \vec{u}_l \\ \vec{u}_r \end{pmatrix}^T
    \M (\D1{+} - \D1{-})
    \begin{pmatrix} \vec{u}_l \\ \vec{u}_r \end{pmatrix}
    =
    \vec{u}_l^T \M{l} (\D1{+,l} - \D1{-,l}) \vec{u}_l
    + \vec{u}_r^T \M{r} (\D1{+,r} - \D1{-,r}) \vec{u}_r
    \\
    - (\eR{l}^T \vec{u}_l)^2
    + 2 (\eR{l}^T \vec{u}_l) (\eL{r} \vec{u}_r)
    - (\eL{r} \vec{u}_r)^2
    \leq 0.
  \end{split}
  \end{equation}
  The order of accuracy can be checked as for Theorem~\ref{thm:D1-DG}.
\end{proof}

\subsection{Nodal continuous Galerkin methods}
\label{sec:CG}

An alternative to the discontinuous coupling of multiple elements is
a continuous coupling of first-derivative operators as in continuous
finite element methods, \cf \cite{hicken2016multidimensional,
hicken2020entropy}.
Continuous Galerkin methods have also been studied from the point of view
of SBP operators in other articles, \eg
\cite{nordstrom2006conservative, abgrall2019analysisI,
abgrall2019analysisII}.
In contrast to the discontinuous coupling which uses the interface
node twice and allows a multivalued solution there, the continuous
coupling uses the interface node only once.

To describe the continuous coupling, indices of matrices will be
denoted by subscripts using a syntax similar to MATLAB and Julia
\cite{bezanson2017julia}, \ie $(\M{l})_{1:N_l-1,1:N_l-1}$ denotes
the upper left block of $\M{l}$ excluding the last column and row.

\begin{theorem}
\label{thm:D1-CG}
  Consider two first-derivative SBP operators $\D1{l/r}$ on the grids
  $\vec{x}_{l/r}$ with $\vec{x}_{N_l,l} = \vec{x}_{1,r}$. Then,
  \begin{equation}
  \label{eq:D1-CG}
  \begin{gathered}
    \D1 =
    \M^{-1}
    \begin{pmatrix}
      (\M{l} \D1{l})_{1:N_l-1, 1:N_l-1} &
      (\M{l} \D1{l})_{1:N_l-1, N_l} &
      0 \\
      (\M{l} \D1{l})_{N_l, 1:N_l-1} &
      (\M{l} \D1{l})_{N_l, N_l} + (\M{r} \D1{r})_{1, 1} &
      (\M{r} \D1{r})_{1, 2:N_r} \\
      0 &
      (\M{r} \D1{r})_{2:N_r, 1} &
      (\M{r} \D1{r})_{2:N_r, 2:N_r}
    \end{pmatrix},
    \\
    \M =
    \begin{pmatrix}
      (\M{-})_{1:N_l-1, 1:N_l-1} &
      (\M{-})_{1:N_l-1, N_l} &
      0 \\
      (\M{-})_{N_l, 1:N_l-1} &
      (\M{-})_{N_l, N_l} + \M{+, 1, 1} &
      (\M{+})_{1, 2:N_r} \\
      0 &
      (\M{+})_{2:N_r, 1} &
      (\M{+})_{2:N_r, 2:N_r}
    \end{pmatrix},
  \end{gathered}
  \end{equation}
  yields a first-derivative SBP operator on the joint grid $\vec{x} =
  (\vec{x}_{1,l}, \dots, \vec{x}_{N_l,l} = \vec{x}_{1,r}, \vec{x}_{2,r}, \dots, \vec{x}_{N_r,r})^T$
  with $N = N_l + N_r - 1$ grid nodes.
  This SBP operator has the same order of accuracy as the less accurate
  operator of $\D1{l/r}$.
\end{theorem}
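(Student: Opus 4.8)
The plan is to follow the template of Theorem~\ref{thm:D1-DG}, while handling two features specific to the continuous coupling: the mass matrix $\M$ is no longer block diagonal, so I must first confirm that it is symmetric positive definite (and hence that $\D1 = \M^{-1}(\M\D1)$ is even well defined), and the two contributions meeting at the shared interface node must be shown to cancel. Throughout I abbreviate $Q_l = \M{l}\D1{l}$ and $Q_r = \M{r}\D1{r}$ and note that the matrix defining $\M\D1$ in \eqref{eq:D1-CG} is precisely the finite-element assembly of $Q_l$ and $Q_r$: interior blocks are copied verbatim, the interface node $N_l$ carries the sum of the two interface diagonal entries, and there is no direct coupling between left-interior and right-interior nodes. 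The mass matrix $\M$ is the analogous assembly of $\M{l}$ and $\M{r}$.

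First I would establish that $\M$ is symmetric positive definite. Symmetry is immediate from the block structure. For definiteness, I scatter a global vector $\vec{v}\in\R^N$ into its left part $\vec{v}_l = (\vec{v}_1,\dots,\vec{v}_{N_l})^T$ and right part $\vec{v}_r = (\vec{v}_{N_l},\dots,\vec{v}_N)^T$, which share the interface value $\vec{v}_{N_l}$. Reading off \eqref{eq:D1-CG} yields the identity $\vec{v}^T \M \vec{v} = \vec{v}_l^T \M{l} \vec{v}_l + \vec{v}_r^T \M{r} \vec{v}_r$. Since $\M{l}$ and $\M{r}$ are positive definite, both terms are nonnegative and their sum vanishes only if $\vec{v}_l = \vec{0}$ and $\vec{v}_r = \vec{0}$, i.e.\ $\vec{v} = \vec{0}$. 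Thus $\M$ is invertible and $\D1$ is well defined.

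Next comes the SBP identity. Since $\M$ is symmetric, $\M\D1 + \D1^T\M = Q + Q^T$ with $Q = \M\D1$ the assembled matrix. Assembly is linear and commutes with transposition (each local contribution is placed at symmetric positions and summed), so $Q + Q^T$ is the assembly of $Q_l + Q_l^T$ and $Q_r + Q_r^T$. Substituting the individual SBP relations $Q_l + Q_l^T = \eR{l}\eR{l}^T - \eL{l}\eL{l}^T$ and $Q_r + Q_r^T = \eR{r}\eR{r}^T - \eL{r}\eL{r}^T$, I track where each rank-one term lands in the global indexing: the term $\eL{l}\eL{l}^T$ sits at the global left endpoint, giving $\eL\eL^T$; the term $\eR{r}\eR{r}^T$ sits at the global right endpoint, giving $\eR\eR^T$; and both $\eR{l}\eR{l}^T$ and $\eL{r}\eL{r}^T$ sit at the shared interface node $N_l$. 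The crucial cancellation is that the left contribution $+\eR{l}\eR{l}^T$ and the right contribution $-\eL{r}\eL{r}^T$ annihilate there, leaving exactly $\eR\eR^T - \eL\eL^T$, which is \eqref{eq:D1-bounded}.

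For the order of accuracy I argue as in Theorem~\ref{thm:D1-DG}. Applying $Q$ to a monomial $\vec{x}^k$ with $k \le \min(p_l,p_r)$ and using that the matrix-vector product of an assembled matrix is the assembly of the local products, together with $Q_l\,\vec{x}_l^k = k\,\M{l}\,\vec{x}_l^{k-1}$ and its right analogue (and the fact that the interface node has a single coordinate, so the monomial values from both sides agree), I obtain $\M\D1\,\vec{x}^k = k\,\M\,\vec{x}^{k-1}$; multiplying by $\M^{-1}$ gives $\D1\,\vec{x}^k = k\,\vec{x}^{k-1}$, so the order is $\min(p_l,p_r)$. The main obstacle, relative to the discontinuous case, is precisely the positive-definiteness step: for the discontinuous coupling of Theorem~\ref{thm:D1-DG} the mass matrix is block diagonal and trivially invertible, whereas here the assembled interface entry couples the two blocks, so the existence of $\D1$ hinges on the scatter decomposition above. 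The remaining work is careful bookkeeping of the interface contributions.
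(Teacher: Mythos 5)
Your proof is correct and follows essentially the same route as the paper's: verify that the assembled mass matrix is symmetric positive definite, compute $\M \D1 + \D1^T \M$ from the local SBP relations and observe that the two interface contributions cancel at the shared node, and check the order of accuracy by applying the operator to monomials. The only difference is presentational — you spell out the positive-definiteness via the scatter identity $\vec{v}^T \M \vec{v} = \vec{v}_l^T \M{l} \vec{v}_l + \vec{v}_r^T \M{r} \vec{v}_r$ where the paper simply calls it obvious, and you phrase the block computation in assembly language — but the substance is identical.
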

\begin{proof}
  The new mass matrix $\M$ is obviously symmetric and positive definite.
  Moreover,
  \begin{equation}
    \M \D1 + \D1^T \M
    =
    \begin{pmatrix}
      (\eL{l} \eL{l}^T)_{1:N_l-1, 1:N_l-1} & 0 & 0 \\
      0 & 0 & 0 \\
      0 & 0 & (\eR{r} \eR{r}^T)_{2:N_r, 2:N_r}
    \end{pmatrix}.
  \end{equation}
  Again, the order of accuracy can be checked by applying $\D1$ to a
  polynomial.
\end{proof}


\begin{remark}
  The derivative constructed in \eqref{eq:D1-CG} yields
  a strong-form CG discretization on two elements which
  (ignoring the other boundaries) is equivalent to the usual
  weak-form CG discretization
  \begin{equation}
    - \int_{x_{1,l}}^{x_{N_r,r}} (\partial_x \phi) u,
  \end{equation}
  where $\phi$ is a global test function,
  because of the SBP property \eqref{eq:D1-bounded}.
\end{remark}

\begin{remark}
  The nodal CG methods used in this article are constructed by coupling
  multiple elements/blocks using the SBP operator with diagonal mass matrix
  determined by Lobatto-Legendre quadrature
  \cite[Chapter~1]{kopriva2009implementing} continuously as described in
  Theorem~\ref{thm:D1-CG}.
  For these methods, $\dx$ is the size (length) of one element.
\end{remark}

\begin{example}
  Coupling SBP operators using nodal Lobatto-Legendre bases for polynomials
  of degree $p = 1$ continuously on a uniform mesh results in the classical
  finite difference SBP operator
  \begin{equation}
    \M = \dx
    \begin{psmallmatrix}
      \nicefrac{1}{2} \\
      & 1 \\
      && \ddots \\
      &&& 1 \\
      &&&& \nicefrac{1}{2}
    \end{psmallmatrix},
    \qquad
    \D1 = \frac{1}{\dx}
    \begin{psmallmatrix}
      -1 & 1 \\
      \nicefrac{-1}{2} & 0 & \nicefrac{1}{2} \\
      & \ddots & \ddots & \ddots \\
      && \nicefrac{-1}{2} & 0 & \nicefrac{1}{2} \\
      &&& -1 & 1
    \end{psmallmatrix}.
  \end{equation}
\end{example}

\begin{corollary}
\label{cor:D1-CG}
  Coupling first-derivative SBP operators continuously as described in
  Theorem~\ref{thm:D1-CG} on a periodic domain results in a
  periodic first-derivative SBP operator.
\end{corollary}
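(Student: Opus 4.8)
The plan is to reduce the periodic case to the bounded-domain result of Theorem~\ref{thm:D1-CG}, so that only the behaviour at the single wrap-around interface remains to be checked. First I would assemble the global operator by applying the continuous coupling of Theorem~\ref{thm:D1-CG} at each interior interface. This produces a consistent first-derivative matrix $\D1$ of the asserted order of accuracy together with a mass matrix $\M$; the latter is symmetric, and it is positive definite by the standard assembly argument, since $\vec{u}^T \M \vec{u}$ splits into a sum of the strictly positive element contributions $\vec{u}_e^T \M{e} \vec{u}_e$. By Theorem~\ref{thm:D1-CG} this bounded-domain operator satisfies $\M \D1 + \D1^T \M = \eR \eR^T - \eL \eL^T$, with the two surviving terms supported on the first and last global nodes.

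The essential observation is that, on a periodic domain, the right endpoint of the last element and the left endpoint of the first element coincide, so closing the loop is nothing but one further instance of the interface coupling already analysed in Theorem~\ref{thm:D1-CG}. I would therefore recall the cancellation mechanism at an interior interface: there the contribution $+\eR \eR^T$ coming from the element on the left of the shared node exactly cancels the contribution $-\eL \eL^T$ coming from the element on its right, because both act on the same merged node. The leftover bounded-domain terms $+\eR \eR^T$ at the last node and $-\eL \eL^T$ at the first node are precisely of this shape once those two nodes are identified. Hence, after the wrap-around identification they cancel, giving $\M \D1 + \D1^T \M = 0$, which together with the properties of $\M$ and $\D1$ established above is exactly Definition~\ref{def:D1-periodic}.

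The main obstacle I anticipate is purely one of bookkeeping: I must make precise that the assembled $\D1$ and $\M$ for the closed loop agree entry by entry with the output of Theorem~\ref{thm:D1-CG} at the wrap-around, and in particular that identifying the first and last nodes merges exactly their rows and columns without perturbing the interior blocks. This is cleanest if one relabels the periodic grid so that the shared endpoint carries a single index from the outset; once the indexing is fixed, the cancellation is immediate and no genuine computation remains.
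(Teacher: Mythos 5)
Your proposal is correct and follows exactly the argument the paper intends for this corollary: assemble the operator by the continuous coupling of Theorem~\ref{thm:D1-CG}, then observe that identifying the first and last nodes makes the residual boundary terms $\eR\eR^T$ and $-\eL\eL^T$ cancel by the same mechanism as at an interior interface, yielding $\M \D1 + \D1^T \M = 0$ as required by Definition~\ref{def:D1-periodic}. The paper leaves this as an immediate consequence and gives no separate proof, so there is nothing to add beyond your (correctly flagged) bookkeeping of the wrap-around node.
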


Similarly to (central) SBP operators, first-derivative upwind SBP
operators can also be coupled continuously.
\begin{theorem}
\label{thm:D1-upwind-CG}
  Consider two first-derivative upwind SBP operators $\D1{l/r,\pm}$
  on the grids
  $\vec{x}_{l/r}$ with $\vec{x}_{N_l,l} = \vec{x}_{1,r}$. Then,
  \begin{equation}
  \label{eq:D1-upwind-CG}
  \begin{gathered}
    \D1{\pm} =
    \M^{-1}
    \begin{pmatrix}
      (\M{l} \D1{l,\pm})_{1:N_l-1, 1:N_l-1} &
      (\M{l} \D1{l,\pm})_{1:N_l-1, N_l} &
      0 \\
      (\M{l} \D1{l,\pm})_{N_l, 1:N_l-1} &
      (\M{l} \D1{l,\pm})_{N_l, N_l} + (\M{r} \D1{r,\pm})_{1, 1} &
      (\M{r} \D1{r,\pm})_{1, 2:N_r} \\
      0 &
      (\M{r} \D1{r,\pm})_{2:N_r, 1} &
      (\M{r} \D1{r,\pm})_{2:N_r, 2:N_r}
    \end{pmatrix},
    \\
    \M =
    \begin{pmatrix}
      \M{-, 1:N_l-1, 1:N_l-1} &
      \M{-, 1:N_l-1, N_l} &
      0 \\
      \M{-, N_l, 1:N_l-1} &
      \M{-, N_l, N_l} + \M{+, 1, 1} &
      \M{+, 1, 2:N_r} \\
      0 &
      \M{+, 2:N_r, 1} &
      \M{+, 2:N_r, 2:N_r}
    \end{pmatrix},
  \end{gathered}
  \end{equation}
  yields first-derivative upwind SBP operators on the joint grid $\vec{x} =
  (\vec{x}_{1,l}, \dots, \vec{x}_{N_l,l} = \vec{x}_{1,r}, \vec{x}_{2,r}, \dots, \vec{x}_{N_r,r})^T$
  with $N = N_l + N_r - 1$ nodes.
  These operators have the same order of accuracy as the less accurate
  given operators.
\end{theorem}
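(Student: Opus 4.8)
The plan is to prove all three requirements of Definition~\ref{def:D1-upwind-bounded} for the coupled operators by combining the continuous assembly argument of Theorem~\ref{thm:D1-CG} with the dissipation bookkeeping of Theorem~\ref{thm:D1-upwind-DG}. The key structural fact, shared with the central continuous coupling, is that the assembled matrices $\M \D1{\pm}$ are built by overlapping $\M{l} \D1{l,\pm}$ and $\M{r} \D1{r,\pm}$ only in the single interface entry $(N_l, N_l)$. Consequently, if for a joint grid function $\vec{u}$ I write $\vec{u}_l$ and $\vec{u}_r$ for its restrictions to the left and right elements (each including the shared interface value, so that $\eR{l}^T \vec{u}_l = \eL{r}^T \vec{u}_r$), then every associated quadratic form splits additively as $\vec{u}^T (\M \D1{\pm}) \vec{u} = \vec{u}_l^T (\M{l} \D1{l,\pm}) \vec{u}_l + \vec{u}_r^T (\M{r} \D1{r,\pm}) \vec{u}_r$, and likewise $\vec{u}^T \M \vec{u} = \vec{u}_l^T \M{l} \vec{u}_l + \vec{u}_r^T \M{r} \vec{u}_r$. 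Establishing this decomposition is the one point that needs care; once it is in place, the rest is essentially inherited from the element operators.

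First I would record that $\M$ is symmetric and positive definite: symmetry is clear from the block structure, and the additive decomposition gives $\vec{u}^T \M \vec{u} = \vec{u}_l^T \M{l} \vec{u}_l + \vec{u}_r^T \M{r} \vec{u}_r \ge 0$, with equality forcing both $\vec{u}_l = \vec{0}$ and $\vec{u}_r = \vec{0}$, hence $\vec{u} = \vec{0}$. Next I would verify the upwind SBP identity exactly as in Theorem~\ref{thm:D1-CG}. Since $\M$ is symmetric, $\D1{-}^T \M = (\M \D1{-})^T$, so transposing the assembly of $\D1{-}$ and adding that of $\D1{+}$ shows that $\M \D1{+} + \D1{-}^T \M$ is the overlapped assembly of the element relations $\M{l} \D1{l,+} + \D1{l,-}^T \M{l} = \eR{l} \eR{l}^T - \eL{l} \eL{l}^T$ and $\M{r} \D1{r,+} + \D1{r,-}^T \M{r} = \eR{r} \eR{r}^T - \eL{r} \eL{r}^T$. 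At the shared interface node the left contribution $+\eR{l} \eR{l}^T$ and the right contribution $-\eL{r} \eL{r}^T$ cancel, leaving only the global boundary terms $\eR{r} \eR{r}^T - \eL{l} \eL{l}^T = \eR \eR^T - \eL \eL^T$, which is \eqref{eq:D1-upwind-bounded}.

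It remains to check the dissipation condition and the order of accuracy. For the former, the additive decomposition yields $\vec{u}^T \M (\D1{+} - \D1{-}) \vec{u} = \vec{u}_l^T \M{l} (\D1{l,+} - \D1{l,-}) \vec{u}_l + \vec{u}_r^T \M{r} (\D1{r,+} - \D1{r,-}) \vec{u}_r$, and each summand is nonpositive because $\frac{1}{2} \M{l/r} (\D1{l/r,+} - \D1{l/r,-})$ is negative semidefinite by assumption; hence $\frac{1}{2} \M (\D1{+} - \D1{-})$ is negative semidefinite. Notably, unlike the discontinuous coupling of Theorem~\ref{thm:D1-upwind-DG}, no interface penalty term appears here, precisely because the interface value is single-valued, so the decomposition is clean and the conclusion is immediate. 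Finally, the order of accuracy is checked as in Theorem~\ref{thm:D1-CG} by applying $\D1{\pm}$ to the monomials $\vec{x}^k$: a global polynomial restricts to the element polynomials, each element operator reproduces derivatives up to its order, and the combined interface row assembled through $\M^{-1}$ reproduces the exact derivative at the shared node as well, so the joint operators attain the order of the less accurate of the given operators. The only genuine obstacle is the bookkeeping of the shared interface node in the additive decomposition; everything else reduces to the element-level upwind SBP properties and to the interface cancellation already exploited in the central case.
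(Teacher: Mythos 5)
Your proof is correct and follows essentially the same route as the paper: verify that the assembled mass matrix is symmetric positive definite, check the upwind SBP identity via the interface cancellation of the element boundary terms, and obtain negative semidefiniteness of $\M (\D1{+} - \D1{-})$ from the element-wise additivity of the quadratic form (which is exactly why, unlike the discontinuous coupling, no interface penalty appears). The only difference is one of presentation: you make the overlapped-assembly decomposition explicit and use it to justify the dissipation property, which the paper's proof merely asserts.
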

\begin{proof}
  The mass matrix $\M$ is the same as in Theorem~\ref{thm:D1-CG}
  and hence symmetric and positive definite.
  Moreover,
  \begin{equation}
    \M \D1{+} + \D1{-}^T \M
    =
    \begin{pmatrix}
      (\eL{l} \eL{l}^T)_{1:N_l-1, 1:N_l-1} & 0 & 0 \\
      0 & 0 & 0 \\
      0 & 0 & (\eR{r} \eR{r}^T)_{2:N_r, 2:N_r}
    \end{pmatrix}.
  \end{equation}
  Furthermore, $\M (\D1{+} - \D1{-})$
  is negative semidefinite.
\end{proof}

Second-derivative operators can be coupled analogously.
\begin{theorem}
\label{thm:D2-CG}
  Consider two second-derivative SBP operators $\D2{l/r}$ on the grids
  $\vec{x}_{l/r}$ with $\vec{x}_{N_l,l} = \vec{x}_{1,r}$. Then,
  \begin{equation}
  \label{eq:D2-CG}
  \begin{gathered}
    \D2 =
    \M^{-1}
    \begin{pmatrix}
      (-\A2{l} - \eL{l} \dL{l}^T)_{1:N_l-1, 1:N_l-1} &
      (-\A2{l} - \eL{l} \dL{l}^T)_{1:N_l-1, N_l} &
      0 \\
      (-\A2{l})_{N_l, 1:N_l-1} &
      (-\A2{l})_{N_l, N_l} + (-\A2{r})_{1, 1} &
      (-\A2{r})_{1, 2:N_r} \\
      0 &
      (-\A2{r} + \eR{r} \dR{r}^T)_{2:N_r, 1} &
      (-\A2{r} + \eR{r} \dR{r}^T)_{2:N_r, 2:N_r}
    \end{pmatrix},
    \\
    \M =
    \begin{pmatrix}
      \M{-, 1:N_l-1, 1:N_l-1} &
      \M{-, 1:N_l-1, N_l} &
      0 \\
      \M{-, N_l, 1:N_l-1} &
      \M{-, N_l, N_l} + \M{+, 1, 1} &
      \M{+, 1, 2:N_r} \\
      0 &
      \M{+, 2:N_r, 1} &
      \M{+, 2:N_r, 2:N_r}
    \end{pmatrix},
  \end{gathered}
  \end{equation}
  yields a second-derivative SBP operator on the joint grid $\vec{x} =
  (\vec{x}_{1,l}, \dots, \vec{x}_{N_l,l} = \vec{x}_{1,r}, \vec{x}_{2,r}, \dots, \vec{x}_{N_r,r})^T$
  with $N = N_l + N_r - 1$ grid nodes.
  This SBP operator has the same order of accuracy as the less accurate
  operator of $\D2{l/r}$.
\end{theorem}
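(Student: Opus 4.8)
The plan is to verify the three requirements of Definition~\ref{def:D2-bounded} for the assembled operator: that $\M\D2 = -\A2 + \eR\dR^T - \eL\dL^T$ for a symmetric positive-semidefinite $\A2$, that $\M$ is symmetric positive definite, and that the claimed order of accuracy holds. Since $\D2$ is defined as $\M^{-1}$ times the block matrix in \eqref{eq:D2-CG}, multiplying through by $\M$ shows that $\M\D2$ equals that block matrix $B$ exactly, so no inversion is needed and the task reduces to reading off $\A2$, $\dL$, and $\dR$ from $B$. I would take the joint-grid derivative vectors to be the one-sided extensions $\dL = (\dL{l}^T, 0, \dots, 0)^T$ and $\dR = (0, \dots, 0, \dR{r}^T)^T$, i.e.\ $\dL{l}$ padded by zeros over the right-element nodes and $\dR{r}$ padded by zeros over the left-element nodes (each carrying its interface component in position $N_l$). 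With $\eL$ and $\eR$ the first and last unit vectors of the joint grid, $-\eL\dL^T$ then populates exactly the first row and $\eR\dR^T$ exactly the last row of $B$, matching the $-\eL{l}\dL{l}^T$ and $+\eR{r}\dR{r}^T$ contributions already visible in the first and last block rows of \eqref{eq:D2-CG}. These $\dL,\dR$ approximate $u'(\xmin)$ and $u'(\xmax)$ because $\dL{l}$, $\dR{r}$ do so for the outer endpoints of the two elements.

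Subtracting these two boundary terms from $B$ leaves $-\A2$, which is precisely the continuous finite-element assembly of $-\A2{l}$ and $-\A2{r}$: the two element matrices sharing the single interface node $N_l$, with their interface-diagonal entries added and no direct coupling between left-only and right-only nodes. Symmetry of $\A2$ is immediate from the symmetry of $\A2{l}$, $\A2{r}$. The key step, and the one I expect to be the main obstacle, is positive semidefiniteness. I would prove it by the standard assembly identity: writing a joint vector as $\vec{u} = (\vec{a}^T, b, \vec{c}^T)^T$ with $b$ the interface value, and setting $\vec{u}_l = (\vec{a}^T, b)^T$, $\vec{u}_r = (b, \vec{c}^T)^T$, a direct expansion of the block form gives $\vec{u}^T \A2 \vec{u} = \vec{u}_l^T \A2{l} \vec{u}_l + \vec{u}_r^T \A2{r} \vec{u}_r \ge 0$, since each element matrix is positive semidefinite. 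Thus $\A2$ is symmetric positive semidefinite, as required.

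Finally, the mass matrix $\M$ in \eqref{eq:D2-CG} is identical to the one constructed in Theorem~\ref{thm:D1-CG}, so it is symmetric positive definite by that result. For the order of accuracy I would apply $\D2$ to the nodal values $\vec{p}$ of a polynomial $p$ of degree at most $\min(p_l, p_r)$ and check $B\vec{p} = \M \vec{p}''$. On the interior block rows, rewriting $-\A2{l} - \eL{l}\dL{l}^T = \M{l}\D2{l} - \eR{l}\dR{l}^T$ (and the mirror identity on the right) and using element consistency $\D2{l}\vec{p}_l = \vec{p}_l''$ reproduces $\M\vec{p}''$ directly, because the one-sided terms $\eR{l}\dR{l}^T$, $\eL{r}\dL{r}^T$ act only on the endpoint rows. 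At the interface row the two remaining one-sided derivative approximations enter with opposite signs and, by consistency of the element operators, both equal $p'$ at the shared node, $\dR{l}^T \vec{p}_l = \dL{r}^T \vec{p}_r = p'(\vec{x}_{N_l,l})$; they therefore cancel, leaving the assembled value $[\M{l}\vec{p}_l'']_{N_l} + [\M{r}\vec{p}_r'']_1 = [\M\vec{p}'']_{N_l}$. This interface cancellation, entirely analogous to the flux cancellation underlying Theorem~\ref{thm:D1-CG}, is the only subtle point in the accuracy check; together with the semidefiniteness splitting above it completes the verification.
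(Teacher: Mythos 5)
Your proof is correct and follows essentially the same route as the paper: identify $\M$ with the mass matrix of Theorem~\ref{thm:D1-CG}, split $\M\D2$ into the assembled $-\A2$ plus the boundary terms $\eR\dR^T - \eL\dL^T$, and check accuracy on polynomials. You merely make explicit two points the paper leaves implicit, namely the assembly identity $\vec{u}^T\A2\vec{u} = \vec{u}_l^T\A2{l}\vec{u}_l + \vec{u}_r^T\A2{r}\vec{u}_r$ for semidefiniteness and the cancellation of the one-sided interface derivative approximations in the accuracy check.
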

\begin{proof}
  The new mass matrix $\M$ is the same as in Theorem~\ref{thm:D1-CG}
  and hence symmetric and positive definite.
  Additionally,
  \begin{equation}
  \begin{aligned}
    \M \D2
    &=
    \begin{pmatrix}
      (-\A2{l} )_{1:N_l-1, 1:N_l-1} &
      (-\A2{l})_{1:N_l-1, N_l} &
      0 \\
      (-\A2{l})_{N_l, 1:N_l-1} &
      (-\A2{l})_{N_l, N_l} + (-\A2{r})_{1, 1} &
      (-\A2{r})_{1, 2:N_r} \\
      0 &
      (-\A2{r})_{2:N_r, 1} &
      (-\A2{r})_{2:N_r, 2:N_r}
    \end{pmatrix}
    \\&\quad
    +
    \begin{pmatrix}
      (-\eL{l} \dL{l}^T)_{1:N_l-1, 1:N_l-1} &
      (-\eL{l} \dL{l}^T)_{1:N_l-1, N_l} &
      0 \\
      0 &
      0 &
      0 \\
      0 &
      (\eR{r} \dR{r}^T)_{2:N_r, 1} &
      (\eR{r} \dR{r}^T)_{2:N_r, 2:N_r}
    \end{pmatrix},
  \end{aligned}
  \end{equation}
  where the first matrix is negative semidefinite. This is of the
  required form $\M \D2 = -\A2 + \eR \dR^T - \eL \dL^T$.
  Again, the order of accuracy can be checked by applying $\D2$ to a
  polynomial.
\end{proof}

\begin{remark}
  Ignoring the outer boundaries, applying the SBP property
  \eqref{def:D2-bounded} to the continuously coupled second-derivative
  operator yields a direct discretization of the weak form
  \begin{equation}
    - \int_{x_{1,l}}^{x_{N_r,r}} (\partial_x \phi) (\partial_x u).
  \end{equation}
\end{remark}

\begin{corollary}
\label{cor:D2-CG}
  Coupling second-derivative SBP operators continuously as described in
  Theorem~\ref{thm:D2-CG} on a periodic domain results in a
  periodic second-derivative SBP operator.
\end{corollary}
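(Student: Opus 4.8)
The plan is to follow the proof of Theorem~\ref{thm:D2-CG} verbatim in structure, reading the periodic continuous coupling as a cyclic gluing of the element operators in which \emph{every} grid node is an interface node shared by two neighbouring elements. Concretely, I would take a cycle of element operators $\D2{1}, \dots, \D2{K}$ with shared endpoints $\vec{x}_{N_k,k} = \vec{x}_{1,k+1}$, the indices read modulo $K$ so that $\vec{x}_{N_K,K} = \vec{x}_{1,1}$ closes the loop, and assemble the global mass matrix $\M$ and the global product $\M\D2$ by the same rule as in \eqref{eq:D2-CG}: at each shared node the contributions of the two incident elements are summed. The only difference from the non-periodic theorem is the extra identification at the seam, which removes the last two outer endpoints.

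First I would check that $\M$ is symmetric and positive definite. Symmetry is immediate from the symmetry of each $\M{k}$, and positive definiteness follows from the assembly identity $\vec{w}^T \M \vec{w} = \sum_{k=1}^{K} (\vec{w}|_k)^T \M{k} (\vec{w}|_k)$, where $\vec{w}|_k$ is the restriction of the global grid function $\vec{w}$ to the nodes of element $k$; every global component occurs in at least one restriction and each $\M{k}$ is positive definite, so the sum is strictly positive unless $\vec{w} = \vec{0}$.

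Next I would verify the periodic identity $\M\D2 = -\A2$ with $\A2$ symmetric and positive semidefinite, which is the content of Definition~\ref{def:D2-periodic}. The key point, exactly as in Theorem~\ref{thm:D2-CG}, is that the construction \eqref{eq:D2-CG} discards, at each interface, both the term $\eR\dR^T$ coming from the element on its left and the term $-\eL\dL^T$ coming from the element on its right; what remains is the glued combination of the element matrices $-\A2{k}$, with the diagonal at each shared node equal to the sum of the two incident element contributions. In the cyclic case every node is an interface, so no boundary term survives anywhere and I obtain $\M\D2 = -\A2$. Symmetry of $\A2$ is inherited from the $\A2{k}$, and the same assembly identity $\vec{w}^T \A2 \vec{w} = \sum_{k=1}^{K} (\vec{w}|_k)^T \A2{k} (\vec{w}|_k) \ge 0$ gives positive semidefiniteness. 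Consistency and the order-of-accuracy claim would be checked, as in the theorem, by applying $\D2$ to monomials and using continuity of polynomials across interfaces, so that the glued operator reproduces each element derivative up to the least element order.

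The main obstacle I anticipate is bookkeeping rather than substance: the seam identification $\vec{x}_{N_K,K} = \vec{x}_{1,1}$ makes the global matrices block circulant instead of block tridiagonal, so the tidy three-by-three block display of \eqref{eq:D2-CG} does not transcribe verbatim once the wrap-around couplings between the last and first nodes are present. I would avoid this by phrasing the two (semi)definiteness computations through the assembly maps $\vec{w} \mapsto \vec{w}|_k$, which are basis-free and never require writing the circulant blocks; alternatively one can glue the chain $1,\dots,K$ by iterating Theorem~\ref{thm:D2-CG} and then perform a single further seam identification, checking that the one remaining pair of boundary terms $\eR\dR^T - \eL\dL^T$ is cancelled exactly as an ordinary interface. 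I would prefer the assembly-map formulation, since it makes the positive-(semi)definiteness transparent and sidesteps the indexing entirely.
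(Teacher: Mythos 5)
Your proposal is correct and follows exactly the route the paper intends: the corollary is stated without proof because, as in the proof of Theorem~\ref{thm:D2-CG}, the only terms obstructing $\M\D2 = -\A2$ are the outer-boundary contributions $\eR\dR^T - \eL\dL^T$, and in the cyclic gluing every node is an interface, so all such terms are discarded and only the assembled, symmetric, negative semidefinite combination of the $-\A2{k}$ remains. Your assembly-map formulation of the (semi)definiteness checks is a clean way to handle the block-circulant bookkeeping and adds nothing beyond what the theorem's proof already establishes.
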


\begin{example}
  Coupling second-derivative SBP operators using nodal Lobatto-Legendre bases
  for polynomials of degree $p = 1$ continuously on a uniform mesh results in
  \begin{equation}
    \M = \dx
    \begin{psmallmatrix}
      \nicefrac{1}{2} \\
      & 1 \\
      && \ddots \\
      &&& 1 \\
      &&&& \nicefrac{1}{2}
    \end{psmallmatrix},
    \qquad
    \D2 = \frac{1}{\dx^2}
    \begin{psmallmatrix}
      0 & 0 \\
      1 & -2 & 1 \\
      & \ddots & \ddots & \ddots \\
      && 1 & -2 & 1 \\
      &&& 0 & 0
    \end{psmallmatrix},
  \end{equation}
  which is very similar to the narrow-stencil second-derivative SBP operator
  \begin{equation}
    \M = \dx
    \begin{psmallmatrix}
      \nicefrac{1}{2} \\
      & 1 \\
      && \ddots \\
      &&& 1 \\
      &&&& \nicefrac{1}{2}
    \end{psmallmatrix},
    \qquad
    \D2 = \frac{1}{\dx^2}
    \begin{psmallmatrix}
      1 & -2 & 1 \\
      1 & -2 & 1 \\
      & \ddots & \ddots & \ddots \\
      && 1 & -2 & 1 \\
      && 1 & -2 & 1
    \end{psmallmatrix},
  \end{equation}
  of \cite{mattsson2004summation} but uses a different boundary closure.
\end{example}

\subsection{Some useful properties of periodic SBP operators}

Here, we gather some properties of periodic SBP operators that will be
useful to prove conservation properties later in the article.

\begin{lemma}
\label{lem:1-M-Di}
  Periodic first, second, and fourth-derivative operators satisfy
  $\vec{1}^T \M \D{i} = \vec{0}^T$.
  Periodic first-derivative upwind SBP operators satisfy
  $\vec{1}^T \M \D1{\pm} = \vec{0}^T$.
\end{lemma}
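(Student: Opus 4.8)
The plan is to reduce every case to a single observation: by consistency, the constant vector lies in the kernel of each derivative matrix. Indeed, setting $k = 0$ in the defining relation of $\D{i}$ (which is available since consistency means $p \ge 0$) and invoking the convention $0\,\vec{x}^k = \vec{0}$ gives $\D{i}\vec{1} = \D{i}\vec{x}^0 = \vec{0}$; the same holds for the upwind pair, so that $\D1{+}\vec{1} = \D1{-}\vec{1} = \vec{0}$. Everything else is bookkeeping with the periodic SBP relations and the symmetry of $\M$.

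For the first derivative I would use \eqref{eq:D1-periodic} in the form $\M\D1 = -\D1^T\M$ to transpose the constant vector onto the derivative,
\begin{equation*}
  \vec{1}^T \M \D1 = -\vec{1}^T \D1^T \M = -(\D1 \vec{1})^T \M = \vec{0}^T .
\end{equation*}

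For the second derivative the norm matrix is paired with the symmetric matrix $\A2$ rather than with $\D1^T$. From \eqref{eq:D2-periodic} we have $\M\D2 = -\A2$, so applying both sides to $\vec{1}$ and using $\D2\vec{1} = \vec{0}$ first yields $\A2\vec{1} = -\M\D2\vec{1} = \vec{0}$; then symmetry of $\A2$ gives
\begin{equation*}
  \vec{1}^T \M \D2 = -\vec{1}^T \A2 = -(\A2 \vec{1})^T = \vec{0}^T .
\end{equation*}
The fourth derivative is handled identically starting from $\M\D4 = \A4$ in \eqref{eq:D4-periodic}: one deduces $\A4\vec{1} = \M\D4\vec{1} = \vec{0}$ and concludes $\vec{1}^T\M\D4 = \vec{1}^T\A4 = (\A4\vec{1})^T = \vec{0}^T$ by symmetry of $\A4$.

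For the periodic upwind operators, \eqref{eq:D1-upwind-periodic} reads $\M\D1{+} + \D1{-}^T\M = 0$, which directly gives $\vec{1}^T\M\D1{+} = -(\D1{-}\vec{1})^T\M = \vec{0}^T$. Transposing this relation and using that $\M$ is symmetric produces the companion identity $\M\D1{-} = -\D1{+}^T\M$, and the same manipulation yields $\vec{1}^T\M\D1{-} = -(\D1{+}\vec{1})^T\M = \vec{0}^T$. I do not expect a genuine obstacle here; the only step that is not immediate is obtaining $\A2\vec{1} = \vec{0}$ (and likewise $\A4\vec{1} = \vec{0}$), which comes from the SBP relation combined with consistency rather than from any property of $\A2$ or $\A4$ in isolation.
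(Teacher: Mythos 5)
Your proof is correct and follows essentially the same route as the paper: both arguments combine the periodic SBP identities with the consistency fact $\D{i}\vec{1} = \vec{0}$ and the symmetry of the relevant matrices. Your detour through $\A2\vec{1} = \vec{0}$ (resp.\ $\A4\vec{1} = \vec{0}$) is just a slightly more explicit bookkeeping of the observation the paper uses directly, namely that $\M\D2 = -\A2$ is symmetric so $\vec{1}^T \M \D2 = \vec{1}^T \D2^T \M = (\D2\vec{1})^T \M = \vec{0}^T$.
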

\begin{proof}
  Applying the defining conditions
  \eqref{eq:D1-periodic} \& \eqref{eq:D2-periodic} \& \eqref{eq:D4-periodic}
  and consistency of the derivative operators yields
  \begin{equation}
  \begin{aligned}
    \vec{1}^T \M \D1
    &=
    - \vec{1}^T \D1^T \M
    =
    \vec{0}^T,
    \\
    \vec{1}^T \M \D2
    &=
    \vec{1}^T \D2^T \M
    =
    \vec{0}^T,
    \\
    \vec{1}^T \M \D4
    &=
    \vec{1}^T \D4^T \M
    =
    \vec{0}^T.
  \end{aligned}
  \end{equation}
  Similarly,
  \begin{equation}
    \vec{1}^T \M \D1{\pm}
    =
    - \vec{1}^T \D1{\mp}^T \M
    =
    \vec{0}^T.
    \qedhere
  \end{equation}
\end{proof}

\begin{lemma}
\label{lem:1-in-left-kernel-of-ImD2inv}
  If $\D2$ is a periodic second-derivative SBP operator with mass matrix
  $\M$, then $\vec{1}^T \M (\I - \D2)^{-1} = \vec{1}^T \M$.
\end{lemma}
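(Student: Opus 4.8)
The plan is to avoid working with the inverse directly and instead establish the equivalent identity $\vec{1}^T \M (\I - \D2) = \vec{1}^T \M$, from which the claim follows by right-multiplying both sides with $(\I - \D2)^{-1}$. For this reduction to be legitimate I must first confirm that $\I - \D2$ is actually invertible, so the argument splits naturally into two parts: an invertibility check, and a short computation using the previous lemma.

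For invertibility I would use the defining relation of a periodic second-derivative SBP operator, $\M \D2 = -\A2$ with $\A2$ symmetric positive semidefinite (Definition~\ref{def:D2-periodic}). Since $\M$ is symmetric positive definite it is invertible, and I can factor
\begin{equation*}
  \I - \D2 = \I + \M^{-1} \A2 = \M^{-1} (\M + \A2).
\end{equation*}
The matrix $\M + \A2$ is a sum of a symmetric positive-definite and a symmetric positive-semidefinite matrix, hence symmetric positive definite and in particular invertible. Thus $\I - \D2$ is a product of two invertible matrices and is invertible, so $(\I - \D2)^{-1}$ exists and the reduction above is justified.

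The remaining step is the computation of the left kernel relation. Applying Lemma~\ref{lem:1-M-Di}, which gives $\vec{1}^T \M \D2 = \vec{0}^T$ for periodic second-derivative operators, I obtain
\begin{equation*}
  \vec{1}^T \M (\I - \D2)
  = \vec{1}^T \M - \vec{1}^T \M \D2
  = \vec{1}^T \M - \vec{0}^T
  = \vec{1}^T \M.
\end{equation*}
Right-multiplying by $(\I - \D2)^{-1}$ then yields $\vec{1}^T \M = \vec{1}^T \M (\I - \D2)^{-1}$, which is the claim. There is no real obstacle here; the only point requiring care is the invertibility of $\I - \D2$, and even that is immediate once the defining relation $\M \D2 = -\A2$ is used to exhibit the positive-definite factorization. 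Everything else is a one-line consequence of the earlier lemma.
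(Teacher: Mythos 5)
Your proof is correct, and it takes a slightly different route from the paper's. The paper argues via the \emph{right} kernel: it first observes that $\M (\I - \D2)^{-1}$ is symmetric (a consequence of $\M \D2 = -\A2$ being symmetric), transposes the row vector into a column vector, and then uses $(\I - \D2) \vec{1} = \vec{1}$, which follows from consistency, $\D2 \vec{1} = \vec{0}$. You instead work with the \emph{left} kernel, invoking Lemma~\ref{lem:1-M-Di} to get $\vec{1}^T \M \D2 = \vec{0}^T$, establishing the identity for $\I - \D2$ rather than its inverse, and only then passing to the inverse by right-multiplication. The two arguments rest on the same two facts (symmetry of $\M \D2$ and consistency of $\D2$), but yours has two small advantages: it never needs the symmetry of $\M (\I - \D2)^{-1}$ itself, which the paper asserts without justification and which requires a short additional argument via $\bigl(\M(\I - \D2)\bigr)^{-1}$; and it makes explicit the invertibility of $\I - \D2 = \M^{-1}(\M + \A2)$, which the paper (and indeed the lemma statement) takes for granted. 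The paper's version is marginally shorter once those facts are granted. Both are valid.
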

\begin{proof}
  Since $\M (\I - \D2)^{-1}$ is symmetric,
  \begin{equation}
    \vec{1}^T \M (\I - \D2)^{-1}
    =
    \left( \M (\I - \D2)^{-1} \vec{1} \right)^T
    =
    \left( \M \vec{1} \right)^T.
  \end{equation}
  Here, we used $(\I - \D2) \vec{1} = \vec{1}$, since $\D2 \vec{1} = \vec{0}$
  for any consistent second-derivative approximation $\D2$.
\end{proof}

\begin{lemma}
\label{lem:D1-D2-commuting}
  If $\D1, \D2$ are commuting periodic first- and second-derivative
  SBP operators with the same mass matrix $\M$, then
  $\M (\I - \D2)^{-1} \D1$ is skew-symmetric.
\end{lemma}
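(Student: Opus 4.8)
The plan is to verify skew-symmetry directly, by showing that the transpose of $\M(\I-\D2)^{-1}\D1$ equals its negative; everything reduces to the two defining periodic SBP relations together with the commutativity hypothesis. First I would record the two algebraic facts needed. From the periodic first-derivative property \eqref{eq:D1-periodic}, $\D1$ is skew-adjoint with respect to $\M$, that is $\D1^T\M=-\M\D1$. From the periodic second-derivative property \eqref{eq:D2-periodic}, $\M\D2=-\A2$ with $\A2$ symmetric, so $\M\D2$ is symmetric; equivalently $\D2^T\M=\M\D2$, i.e.\ $\D2^T=\M\D2\M^{-1}$. (Invertibility of $\I-\D2$, implicit in the statement, is immediate: $\M(\I-\D2)=\M+\A2$ is the sum of a symmetric positive-definite and a symmetric positive-semidefinite matrix, hence symmetric positive definite, so $\I-\D2=\M^{-1}(\M+\A2)$ is invertible.)

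The key intermediate identity I would establish is
\begin{equation*}
  (\I-\D2^T)^{-1}\M = \M(\I-\D2)^{-1}.
\end{equation*}
This follows by conjugation: since $\D2^T=\M\D2\M^{-1}$ we have $\I-\D2^T=\M(\I-\D2)\M^{-1}$, and inverting gives $(\I-\D2^T)^{-1}=\M(\I-\D2)^{-1}\M^{-1}$; multiplying on the right by $\M$ yields the claim.

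With these in hand the computation is short. Taking the transpose and using the symmetry of $\M$, then the intermediate identity, and then $\D1^T\M=-\M\D1$, I obtain
\begin{equation*}
  \bigl(\M(\I-\D2)^{-1}\D1\bigr)^T
  = \D1^T(\I-\D2^T)^{-1}\M
  = \D1^T\M(\I-\D2)^{-1}
  = -\M\D1(\I-\D2)^{-1}.
\end{equation*}
The final step is where commutativity enters: since $\D1\D2=\D2\D1$, the operator $\D1$ commutes with $\I-\D2$ and hence with $(\I-\D2)^{-1}$, so $\D1(\I-\D2)^{-1}=(\I-\D2)^{-1}\D1$ and the last expression becomes $-\M(\I-\D2)^{-1}\D1$, the negative of the original matrix.

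The only genuinely essential hypothesis, and the step I expect to be the crux, is the commutativity $\D1\D2=\D2\D1$: without it one is stuck at $-\M\D1(\I-\D2)^{-1}$, which in general is not the negative of $\M(\I-\D2)^{-1}\D1$. Everything else is bookkeeping with the two periodic SBP relations and the symmetry of $\M$.
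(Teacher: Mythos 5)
Your proof is correct and follows essentially the same route as the paper's: both rest on the conjugation identity $\I-\D2=\M^{-1}(\I-\D2^T)\M$, the $\M$-skew-adjointness of $\D1$, and the commutativity hypothesis to move $(\I-\D2)^{-1}$ past $\D1$; you simply run the computation starting from the transpose rather than from the matrix itself. Your added observation that $\M(\I-\D2)=\M+\A2$ is symmetric positive definite, justifying the invertibility of $\I-\D2$, is a small but worthwhile addition that the paper leaves implicit.
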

\begin{proof}
  Since
  \begin{equation}
    \I - \D2
    =
    \M^{-1} \left( \I - \M \D2 \M^{-1} \right) \M
    =
    \M^{-1} \left( \I - \D2^T \right) \M,
  \end{equation}
  we have
  \begin{equation}
    \M (\I - \D2)^{-1} \D1
    =
    \left( \I - \D2^T \right)^{-1} \M \D1
    =
    - \left( \I - \D2^T \right)^{-1} \D1^T \M
    =
    - \D1^T \left( \I - \D2^T \right)^{-1} \M.
    \qedhere
  \end{equation}
\end{proof}

In order to use the lemmas above, we will need pairs of first- and
second-derivative operators that commute.  In the following examples
we see that certain natural approaches lead to commuting operators, while
others do not.

\begin{example}
  The finite difference methods described in Section~\ref{sec:FD-collocation}
  in periodic domains result in commuting first- and second-derivative
  operators, since these can be represented by
  circulant matrices, \ie by Toeplitz matrices where each row is obtained
  from the preceding row by cyclically shifting every entry one step to the right
  \cite[Section~C.7]{leveque2007finite}.
\end{example}

\begin{example}
\label{ex:D2-D1-DG}
  Let $\D1$ be a first-derivative SBP operator and let $\D1{\pm} \ne \D1$
  be first-derivative upwind SBP operators all with the same mass matrix $\M$.
  Clearly, the first- and second-derivative operators $(\D1, \D1^2)$
  commute.  On the other hand, $\D1$ does
  not in general commute with the second-derivative operators
  $\D2=\D1{-} \D1{+}$ or $\D2=\D1{+} \D1{-}$.  Furthermore, in general $\M \D2 \D1$
  is not skew-symmetric.

  For example, in a periodic domain $[-1, 3]$ with two elements using nodal
  Lobatto-Legendre bases for polynomials of degree $p = 1$, we have
  \begin{equation}
  \begin{gathered}
    \D1 =
    \begin{psmallmatrix}
      0 & \frac{1}{2} & 0 & -\frac{1}{2} \\
      -\frac{1}{2} & 0 & \frac{1}{2} & 0 \\
      0 & -\frac{1}{2} & 0 & \frac{1}{2} \\
      \frac{1}{2} & 0 & -\frac{1}{2} & 0
    \end{psmallmatrix},
    \quad
    \D1{-} =
    \begin{psmallmatrix}
      \frac{1}{2} & \frac{1}{2} & 0 & -1 \\
      -\frac{1}{2} & \frac{1}{2} & 0 & 0 \\
      0 & -1 & \frac{1}{2} & \frac{1}{2} \\
      0 & 0 & -\frac{1}{2} & \frac{1}{2}
    \end{psmallmatrix},
    \quad
    \D1{+} =
    \begin{psmallmatrix}
      -\frac{1}{2} & \frac{1}{2} & 0 & 0 \\
      -\frac{1}{2} & -\frac{1}{2} & 1 & 0 \\
      0 & 0 & -\frac{1}{2} & \frac{1}{2} \\
      1 & 0 & -\frac{1}{2} & -\frac{1}{2}
    \end{psmallmatrix},
    \\
    \M \D1{-} \D1{+} \D1 =
    \begin{psmallmatrix}
      \frac{1}{4} & -\frac{5}{4} & -\frac{1}{4} & \frac{5}{4} \\
      \frac{1}{4} & -\frac{1}{4} & -\frac{1}{4} & \frac{1}{4} \\
      -\frac{1}{4} & \frac{5}{4} & \frac{1}{4} & -\frac{5}{4} \\
      -\frac{1}{4} & \frac{1}{4} & \frac{1}{4} & -\frac{1}{4}
    \end{psmallmatrix},
    \quad
    \M \D1{+} \D1{-} \D1 =
    \begin{psmallmatrix}
      \frac{1}{4} & -\frac{1}{4} & -\frac{1}{4} & \frac{1}{4} \\
      \frac{5}{4} & -\frac{1}{4} & -\frac{5}{4} & \frac{1}{4} \\
      -\frac{1}{4} & \frac{1}{4} & \frac{1}{4} & -\frac{1}{4} \\
      -\frac{5}{4} & \frac{1}{4} & \frac{5}{4} & -\frac{1}{4}
    \end{psmallmatrix}.
  \end{gathered}
  \end{equation}
  Hence, the second-derivative operator obtained via the LDG
  procedure does not, in general, commute with the corresponding
  first-derivative operator. Using instead the first method of
  Bassi and Rebay, $\D2 = \D1^2$, the operators commute.
\end{example}

\begin{example}
\label{ex:D2-D1-CG}
  Similarly to the discontinuous coupling described in
  Example~\ref{ex:D2-D1-DG}, a continuous coupling also does not
  result in commuting first- and second-derivative operators and
  $\M \D1 \D2$ is not skew-symmetric in general.
  In a periodic domain $[-1, 3]$ with two elements using nodal
  Lobatto-Legendre bases for polynomials of degree $p = 2$,
  \begin{equation}
  \begin{aligned}
    \D1 &=
    \begin{psmallmatrix}
      0 & 1 & 0 & -1 \\
      -\frac{1}{2} & 0 & \frac{1}{2} & 0 \\
      0 & -1 & 0 & 1 \\
      \frac{1}{2} & 0 & -\frac{1}{2} & 0
    \end{psmallmatrix},
    &
    \D2 &=
    \begin{psmallmatrix}
      -\frac{7}{2} & 2 & -\frac{1}{2} & 2 \\
      1 & -2 & 1 & 0 \\
      -\frac{1}{2} & 2 & -\frac{7}{2} & 2 \\
      1 & 0 & 1 & -2
    \end{psmallmatrix},
    \\
    \M &=
    \begin{psmallmatrix}
      \frac{2}{3} & 0 & 0 & 0 \\
      0 & \frac{4}{3} & 0 & 0 \\
      0 & 0 & \frac{2}{3} & 0 \\
      0 & 0 & 0 & \frac{4}{3}
    \end{psmallmatrix},
    &
    \M \D2 \D1 &=
    \begin{psmallmatrix}
      0 & -2 & 0 & 2 \\
      \frac{4}{3} & 0 & -\frac{4}{3} & 0 \\
      0 & 2 & 0 & -2 \\
      -\frac{4}{3} & 0 & \frac{4}{3} & 0
    \end{psmallmatrix}.
  \end{aligned}
  \end{equation}
  Hence, the second-derivative operator obtained via the continuous
  coupling procedure does not, in general, commute with the corresponding
  first-derivative operator. Moreover, $\M \D2 \D1$ is indefinite.
  In order to obtain commuting operators, one may
  again to use the squared first-derivative operator
  as a second-derivative operator, resulting in a wide-stencil operator.
\end{example}

\begin{lemma}
\label{lem:D2-via-D1pm}
  If $\D1{\pm}$ are periodic upwind SBP operators,
  $\M (\I - \D1{-} \D1{+})^{-1} \D1{-}$ is positive semidefinite and
  $\M (\I - \D1{+} \D1{-})^{-1} \D1{+}$ is negative semidefinite.
\end{lemma}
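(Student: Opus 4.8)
The plan is to reduce both statements to a single sign fact about the symmetric part of $\D1{\pm}$. From the defining relation \eqref{eq:D1-upwind-periodic} we have $\D1{-}^T \M = -\M \D1{+}$ and, transposing, $\D1{+}^T \M = -\M \D1{-}$. Hence
\begin{equation*}
  \M \D1{-} + \D1{-}^T \M = -\M(\D1{+} - \D1{-}), \qquad
  \M \D1{+} + \D1{+}^T \M = \M(\D1{+} - \D1{-}),
\end{equation*}
so the first matrix is positive semidefinite and the second negative semidefinite, because $\tfrac12 \M(\D1{+} - \D1{-})$ is negative semidefinite by \eqref{eq:D1-upwind-periodic}. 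I would also record that $\I - \D1{-}\D1{+}$ and $\I - \D1{+}\D1{-}$ are invertible: the products $\M \D1{-}\D1{+} = -\D1{+}^T \M \D1{+}$ and $\M \D1{+}\D1{-} = -\D1{-}^T \M \D1{-}$ are manifestly symmetric and negative semidefinite, so $\D1{\mp}\D1{\pm}$ is similar via $\M^{1/2}$ to a symmetric matrix with nonpositive spectrum, whence $\I - \D1{\mp}\D1{\pm}$ has spectrum bounded below by $1$.

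The key manipulation is the push-through identity $(\I - \D1{-}\D1{+})^{-1}\D1{-} = \D1{-}(\I - \D1{+}\D1{-})^{-1}$, which follows at once from $(\I - \D1{-}\D1{+})\D1{-} = \D1{-}(\I - \D1{+}\D1{-})$. For the first claim I would then substitute $\vec{w} = (\I - \D1{+}\D1{-})^{-1}\vec{v}$, so that $\vec{v} = (\I - \D1{+}\D1{-})\vec{w}$ and $\vec{v}^T = \vec{w}^T(\I - \D1{-}^T\D1{+}^T)$. Expanding the quadratic form and replacing $\D1{+}^T \M$ by $-\M \D1{-}$ collapses it into
\begin{equation*}
  \vec{v}^T \M (\I - \D1{-}\D1{+})^{-1} \D1{-} \vec{v}
  = \tfrac12 \vec{w}^T (\M \D1{-} + \D1{-}^T \M) \vec{w}
  + \tfrac12 (\D1{-}\vec{w})^T (\M \D1{-} + \D1{-}^T \M) (\D1{-}\vec{w}),
\end{equation*}
both terms of which are nonnegative by the sign fact above; this gives positive semidefiniteness.

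The second claim is entirely parallel: using $(\I - \D1{+}\D1{-})^{-1}\D1{+} = \D1{+}(\I - \D1{-}\D1{+})^{-1}$, the substitution $\vec{w} = (\I - \D1{-}\D1{+})^{-1}\vec{v}$, and $\D1{-}^T \M = -\M \D1{+}$, the quadratic form becomes $\tfrac12 \vec{w}^T(\M \D1{+} + \D1{+}^T \M)\vec{w} + \tfrac12 (\D1{+}\vec{w})^T(\M \D1{+} + \D1{+}^T \M)(\D1{+}\vec{w})$, which is nonpositive. I expect the only real subtlety to be choosing the direction of the push-through identity that matches the trailing factor $\D1{\pm}$, so that after the change of variables the same matrix $\M \D1{\pm} + \D1{\pm}^T \M$ appears in both resulting terms; once the relations $\D1{\pm}^T \M = -\M \D1{\mp}$ and the symmetry of $\M \D1{\mp}\D1{\pm}$ are in hand, the invertibility check and the sign bookkeeping are routine.
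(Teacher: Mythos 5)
Your proof is correct and follows essentially the same route as the paper's: a change of variables through the resolvent to eliminate the inverse, followed by the adjoint relations $\D1{\pm}^T \M = -\M \D1{\mp}$ to reduce the quadratic form to a sum of two terms controlled by the sign of $\M(\D1{+} - \D1{-})$. The only substantive addition is your explicit verification that $\I - \D1{\mp}\D1{\pm}$ is invertible (via the symmetry and negative semidefiniteness of $\M\D1{\mp}\D1{\pm}$), which the paper leaves implicit.
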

\begin{proof}
  It suffices to check whether
  \begin{equation}
    \scp{ \vec{u} }{ (\I - \D1{-} \D1{+})^{-1} \D1{-} \vec{u} }_{\M}
    \ge 0
  \end{equation}
  for all $\vec{u}$. Equivalently, one can consider
  $\vec{w} = (\I - \D1{-} \D1{+})^{-1} \vec{u}$
  and compute
  \begin{equation}
  \begin{aligned}
    &\quad
    \scp{ \vec{u} }{ (\I - \D1{-} \D1{+})^{-1} \D1{-} \vec{u} }_{\M}
    \\
    &=
    \scp{ \vec{w} }{ \D1{-} (\I - \D1{-} \D1{+}) \vec{w} }_{\M}
    =
    - \scp{ \vec{w} }{ (\I - \D1{-} \D1{+}) \D1{+} \vec{w} }_{\M}
    \\
    &=
    \frac{1}{2} \scp{ \vec{w} }{ (\D1{-} - \D1{+}) \vec{w} }_{\M}
    - \frac{1}{2} \scp{ \vec{w} }{ \D1{-}^2 \D1{+} \vec{w} }_{\M}
    + \frac{1}{2} \scp{ \vec{w} }{ \D1{-} \D1{+}^2 \vec{w} }_{\M}
    \\
    &=
    \frac{1}{2} \scp{ \vec{w} }{ (\D1{-} - \D1{+}) \vec{w} }_{\M}
    + \frac{1}{2} \scp{ \D1{+} \vec{w} }{ (\D1{-} - \D1{+}) \D1{+} \vec{w} }_{\M}
    \ge 0.
  \end{aligned}
  \end{equation}
  The other assertion is verified by exchanging $+$ and $-$.
\end{proof}

\subsection{Choice of appropriate split forms}
\label{sec:split-forms}

Since SBP operators mimic integration by parts discretely, proofs of invariant
conservation can be transferred to the discrete level directly if this tool
is used. However, systematic integration by parts is often coupled with the
application of the product or chain rule, which do not, in general, hold discretely
\cite{ranocha2019mimetic}. Instead, split forms can be used in a systematic way.
A general recipe for constructing discretely conservative split forms is given
in the following.
\begin{quote}
  Check whether conservation of a nonlinear invariant can be proved using
  only integration by parts and symmetry properties of differential operators.
  If so, apply the same steps discretely using SBP operators.
  Otherwise, write nonlinear terms as linear combinations of different split
  forms obtained by the product/chain rule and repeat the procedure.
\end{quote}
As an example, consider Burgers' equation
\begin{equation}
  \partial_t u(t,x) + \partial_x \frac{u(t,x)^2}{2} = 0
\end{equation}
in a periodic domain. Conservation of the $L^2$ norm can be shown by
applying the chain rule as in
\begin{equation}
\label{eq:burgers-chain-rule}
  \od{}{t} \frac{1}{2} \| u(t) \|_{L^2}^2
  =
  \int u \partial_t u
  =
  - \frac{1}{2} \int u \partial_x u^2
  =
  - \frac{1}{3} \int \partial_x u^3
  =
  0.
\end{equation}
In order to achieve discrete conservation, we look for a way
to show conservation using only integration by parts.
To this end, consider the general splitting of the nonlinear term:
\begin{equation}
  \partial_x \frac{u^2}{2}
  =
  \alpha \partial_x u^2 + (1 - 2 \alpha) u \partial_x u,
\end{equation}
where $\alpha \in \R$ is a real parameter. The energy method using only
integration by parts yields
\begin{equation}
\label{eq:burgers-splitting}
  \od{}{t} \frac{1}{2} \| u(t) \|_{L^2}^2
  =
  \int u \partial_t u
  =
  - \alpha \int u \partial_x u^2
  - (1 - 2 \alpha) \int u^2 \partial_x u
  =
  (1 - 3 \alpha) \int u \partial_x u^2.
\end{equation}
Energy conservation can be obtained by taking $\alpha = \nicefrac{1}{3}$,
and this leads naturally to a conservative numerical method.

The quadratic nonlinearity of Burgers' equation appears in several of the
dispersive wave equations considered in this article, and conservative methods
for them can be designed using the split form just derived.  Such split forms can
also be generalized to higher-order polynomial nonlinearities
\cite[Section~4.5]{ranocha2018thesis} and there are efficient means to
evaluate these for FD, DG, and CG methods \cite{gassner2016split,
ranocha2018comparison}. The same idea can even be applied to get conservative
discretizations for non-polynomial nonlinearities \cite{fisher2013discretely},
but the resulting schemes cannot be interpreted in terms of split forms.

\section{Relaxation methods in time}
\label{sec:relaxation}

In the previous section, we have developed tools for producing conservative
spatial discretizations; in the following sections, these will be applied
to obtain conservative semi-discrete methods for specific wave equations.
These methods reduce an initial boundary value PDE to an initial value ODE system
\begin{align}
    u'(t) & = f(u(t)), \quad u(0) = u_0,
\end{align}
satisfying a conservation property
\begin{align}
  \od{}{t} J(u) & = 0
\end{align}
for some nonlinear invariant $J$.  Herein we employ one-step integration
methods and we enforce the conservation
property discretely in time, so that $J(u^n) = J(u^{n-1}) = J(u_0)$.
We can achieve this by combining our conservative spatial discretizations with
relaxation methods in time
\cite{ketcheson2019relaxation,ranocha2020relaxation,ranocha2020relaxationHamiltonian,ranocha2020general}.

We start with a Runge-Kutta method
\begin{subequations}
\label{eq:RK-step}
\begin{align}
\label{eq:RK-stages}
  y_i
  &=
  u^n + \dt \sum_{j=1}^{s} a_{ij} \, f(t_n + c_j \dt, y_j),
  \qquad i \in \set{1, \dots, s},
  \\
\label{eq:RK-final}
  u(t_n + \dt) \approx u^{n+1}
  &=
  u^n + \dt \sum_{i=1}^{s} b_{i} \, f(t_n + c_i \dt, y_i),
\end{align}
\end{subequations}
and define
\begin{align}
  d^n := \sum_{i=1}^{s} b_{i} f_i,
\end{align}
where we use the shorthand $f_i := f(t_n + c_i \dt, y_i)$.
In general the new solution $u^n$ will not be conservative, so we
replace the update formula \eqref{eq:RK-final} with
an update in the same direction but of a (possibly) different length:
\begin{align} \label{eq:u-gamma}
  u(t_n + \gamma^n\dt) \approx u^{n+1}_\gamma &= u^n + \gamma^n \dt d^n.
\end{align}
The relaxation parameter $\gamma^n$ is chosen as a solution of the conservation
equation
\begin{align} \label{eq:gamma}
    J(u^{n+1}_\gamma) & = J(u^n).
\end{align}
Thus $\gamma^n$ is obtained by solving a scalar nonlinear equation, using some
root-finding method. By the general theory on relaxation methods, there is
exactly one root $\gamma^n = 1 + \O(\dt^{p-1})$ of \eqref{eq:gamma}
\cite[Theorem~2.14]{ranocha2020general}. Other possible roots, such as the
trivial solution $\gamma = 0$, are further away from unity. For quadratic
invariants $J$, these two roots are the only roots and can be computed
explicitly. Similarly, the root $\gamma^n$ closest to unity can also be computed
explicitly for cubic invariants such as for the BBM-BBM system discussed
in Section~\ref{sec:bbm_bbm}. However, these explicit formulas can be sensitive
to floating point errors; we have found that the application of standard
root finding algorithms such as those of \cite{alefeld1995algorithm}
is efficient and often results in more accurate solutions.

The resulting solution $u^{n+1}_\gamma$ conserves the invariant $J$ by construction.
In contrast to projection methods, the relaxation approach also automatically
conserves linear invariants (as long as the semi-discretization conserves
them).  The solution \eqref{eq:u-gamma} has the same local order of accuracy
as that given by the original Runge-Kutta method \eqref{eq:RK-step}.

The use of relaxation Runge-Kutta methods in this context may be viewed as
an application of the ideas developed in \cite{ranocha2020relaxationHamiltonian}.
For some dispersive wave problems, the value of $J$ might change over time
due to boundary conditions or the presence of dissipative terms; in this
case relaxation methods can also be used to improve the accuracy of the
time evolution of $J$ \cite{ketcheson2019relaxation,ranocha2020relaxation}.
For more details regarding the properties of relaxation methods, including
multistep relaxation methods, we refer the reader to \cite{ranocha2020general}.

\section{Conservative discretizations of specific wave equations}
\label{sec:specific-wave-equations}

In this section we develop new conservative discretizations for
several nonlinear dispersive wave equations, using the tools from
the previous two sections.  Most of the discretizations are based on
using appropriate splittings for nonlinear terms and using a basis with a
diagonal mass matrix.  For some
of the equations studied (namely, the Fornberg-Whitham, Camassa-Holm, Hone-Holm,
and BBM-BBM equations) we also require that the various discrete derivative
operators commute.  For other equations (namely, the BBM and Degasperis-Procesi
equations) this is not necessary.

Most of the proposed methods (all except for the BBM-BBM system)
require that the discrete derivative operators
have a diagonal mass matrix.  Alternatively, one can achieve
conservation by discretizing the conservative form of the equation
and using exact integration of all variational forms.  The
latter approach has been used to construct conservative methods,
\eg in \cite{li2020optimal,zhang2020dissipative,xu2011local,xia2014fourier}.
An advantage of the present approach is that
exact integration of the nonlinear terms is not necessary if the
mass matrix is diagonal.

We focus on the development of the discretizations and proofs of their
conservation properties, but we also provide results of simple numerical tests 
that confirm the theoretical properties of the schemes.
Numerical results for each equation are described in the corresponding section
and are also summarized at the end of this work in Table~\ref{tab:summary}.
More extensive numerical experiments, such as studies of solitary 
wave interaction, are left to future work.

We test the accuracy using the
method of manufactured solutions, which consists of choosing a smooth
solution a priori and then adding a source term $f(t,x)$ to the PDE so that
the solution satisfies it \cite{roache2002code}.  In order to isolate the
spatial discretization errors, we discretize in time using the fifth-order
explicit Runge-Kutta pair of \cite{tsitouras2011runge}
with adaptive time stepping and a local error tolerance of $\num{1.0e-12}$,
without relaxation.  For all of the spatial discretizations
proposed, linearized stability analysis suggests that the maximum stable time step
is either proportional to $\dx$ or independent of $\dx$, so explicit time integration
can be efficient.

Conservation tests are performed using solitary wave solutions,
obtained either analytically or via the Petviashvili method
\cite{petviashvili1976equation} using a Fourier collocation method
with $N = 2^{16}$ nodes.  For these tests
we use the classical 4th-order method of Kutta \cite{kutta1901beitrag},
with relaxation. Other space and time discretizations have been tested as well
but are not shown here. We remark in advance that in all cases,
these tests demonstrate conservation of all linear invariants
and the selected nonlinear invariant.  Results for specific discretizations
are shown only when they reveal something of further interest.

In the following, all errors of the form $\| u - u_\mathrm{ana} \|$, where $u$
is the numerical approximation and $u_\mathrm{ana}$ the analytical solution,
are computed using the discrete norm induced by the mass matrix $\M$.
These discrete $L^2$ errors are used to compute the experimental order of
convergence (EOC).

\subsection{Benjamin-Bona-Mahony equation}
\label{sec:bbm}

Consider the Benjamin-Bona-Mahony equation \cite{benjamin1972model}
(also known as regularized long wave equation)
\begin{equation}
\label{eq:bbm-dir}
\begin{aligned}
  (\I - \partial_x^2) \partial_t u(t,x)
  + \partial_x f(u(t,x))
  + \partial_x u(t,x)
  &= 0,
  && t \in (0, T), x \in (\xmin, \xmax),
  \\
  u(0, x) &= u^0(x),
  && x \in [\xmin, \xmax],
  \\
  f(u) &= \frac{u^2}{2},
\end{aligned}
\end{equation}
with periodic boundary conditions, which can also be written as
\begin{equation}
\label{eq:bbm-inv}
  \partial_t u(t,x)
  + (\I - \partial_{x,P}^2)^{-1} \partial_x \left(
    f(u(t,x)) + u(t,x)
  \right) = 0,
\end{equation}
where $(\I - \partial_{x,P}^2)^{-1}$ is the inverse of the elliptic
operator $\I - \partial_x^2$ with periodic boundary conditions.
The functionals
\begin{subequations}
\label{eq:bbm-invariants}
\begin{align}
\label{eq:bbm-invariants-linear}
  J^{\text{BBM}}_1(u)
  &= \int_{\xmin}^{\xmax} u,
  \\
\label{eq:bbm-invariants-quadratic}
  J^{\text{BBM}}_2(u)
  &= \frac{1}{2} \int_{\xmin}^{\xmax} ( u^2 + (\partial_x u)^2 )
  = \frac{1}{2} \int_{\xmin}^{\xmax} u (\I - \partial_x^2) u,
  \\
\label{eq:bbm-invariants-cubic}
  J^{\text{BBM}}_3(u)
  &= \int_{\xmin}^{\xmax} (u + 1)^3,
\end{align}
\end{subequations}
are invariants of solutions \cite{olver1979euler}.
In the following, we will construct numerical methods that conserve
the linear \eqref{eq:bbm-invariants-linear} and quadratic
\eqref{eq:bbm-invariants-quadratic} invariants but not necessarily the
cubic invariant \eqref{eq:bbm-invariants-cubic}.

\subsubsection{Conservative numerical methods}

The rate of change of the quadratic invariant
\eqref{eq:bbm-invariants-quadratic} is
\begin{equation}
  \int_{\xmin}^{\xmax} u (\I - \partial_x^2) \partial_t u
  =
  - \int_{\xmin}^{\xmax} u \partial_x \frac{u^2}{2}
  - \int_{\xmin}^{\xmax} u \partial_x u.
\end{equation}
The first integral on the right-hand side is exactly the one appearing in
the energy rate for Burgers' equation \eqref{eq:burgers-chain-rule}. Hence,
the linear and quadratic invariants are conserved semidiscretely if
periodic SBP operators are employed and the semidiscretization uses
the split form
\begin{equation}
\label{eq:bbm-inv-periodic-SBP}
  \partial_t \vec{u}
  + (\I - \D2)^{-1} \left(
    \frac{1}{3} \D1 \vec{u}^2
    + \frac{1}{3} \vec{u} \D1 \vec{u}
    + \D1 \vec{u}
  \right)
  =
  \vec{0}.
\end{equation}
\begin{theorem}
\label{thm:bbm-inv-periodic-SBP}
  If $\D1$ is a periodic first-derivative SBP operator
  with diagonal mass matrix $\M$
  and $\D2$ is a periodic second-derivative SBP operator,
  then the semidiscretization \eqref{eq:bbm-inv-periodic-SBP}
  conserves the invariants \eqref{eq:bbm-invariants-linear} and
  \eqref{eq:bbm-invariants-quadratic} of \eqref{eq:bbm-dir}.
\end{theorem}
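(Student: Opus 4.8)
The plan is to verify the two conservation properties separately, in each case mimicking the continuous-level argument discretely using the SBP property and the useful lemmas from Section~\ref{sec:SBP}. The semidiscretization \eqref{eq:bbm-inv-periodic-SBP} takes the form $\partial_t \vec{u} = -(\I - \D2)^{-1} \vec{g}$ with $\vec{g} = \frac{1}{3}\D1 \vec{u}^2 + \frac{1}{3}\vec{u} \D1 \vec{u} + \D1 \vec{u}$, so the invariants are conserved if their discrete rates of change vanish.

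\emph{Linear invariant.} The discrete analogue of $J^{\text{BBM}}_1$ is $\vec{1}^T \M \vec{u}$, and I would compute
\begin{equation}
  \od{}{t} \vec{1}^T \M \vec{u}
  =
  - \vec{1}^T \M (\I - \D2)^{-1} \vec{g}.
\end{equation}
By Lemma~\ref{lem:1-in-left-kernel-of-ImD2inv} we have $\vec{1}^T \M (\I - \D2)^{-1} = \vec{1}^T \M$, so this reduces to $-\vec{1}^T \M \vec{g}$. Each term of $\vec{g}$ is of the form $\D1(\cdot)$ or $\vec{u}\D1\vec{u}$; using Lemma~\ref{lem:1-M-Di} (which gives $\vec{1}^T \M \D1 = \vec{0}^T$) the first and third terms vanish immediately. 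For the middle term $\vec{1}^T \M (\vec{u} \D1 \vec{u})$ I would use that $\M$ is diagonal, so $\vec{1}^T \M (\vec{u} \D1 \vec{u}) = (\M \vec{u})^T \D1 \vec{u} = \vec{u}^T \M \D1 \vec{u}$, which by skew-symmetry of $\M \D1$ (from \eqref{eq:D1-periodic}) equals zero.

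\emph{Quadratic invariant.} The discrete analogue of $J^{\text{BBM}}_2$ is $\frac{1}{2}\vec{u}^T \M (\I - \D2)\vec{u}$, whose derivative is $\vec{u}^T \M (\I - \D2)\, \partial_t \vec{u} = -\vec{u}^T \M \vec{g}$ after cancelling $(\I - \D2)(\I - \D2)^{-1}$. I would then treat the three terms. The linear dispersive term gives $-\vec{u}^T \M \D1 \vec{u} = 0$ by skew-symmetry of $\M\D1$. The two nonlinear terms are exactly the discrete Burgers contribution: $-\frac{1}{3}\vec{u}^T \M \D1 \vec{u}^2 - \frac{1}{3}\vec{u}^T \M (\vec{u} \D1 \vec{u})$. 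Here diagonality of $\M$ lets me write $\vec{u}^T \M (\vec{u}\D1\vec{u}) = (\vec{u}^2)^T \M \D1 \vec{u} = (\vec{u}^2)^T \M \D1 \vec{u}$, and skew-symmetry of $\M\D1$ turns this into $-\vec{u}^T\M\D1\vec{u}^2$, so the two nonlinear terms cancel. This is precisely the discrete counterpart of the split-form computation \eqref{eq:burgers-splitting} with $\alpha = \nicefrac{1}{3}$.

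\textbf{The main obstacle} is ensuring that the pointwise product manipulations are justified, and this is exactly where diagonality of $\M$ is essential: the identity $\vec{a}^T \M (\vec{b}\vec{c}) = (\vec{a}\vec{b})^T \M \vec{c}$ holds precisely because $\M$ acts componentwise, so the product rule used to rewrite $\vec{u}^T \M(\vec{u}\D1\vec{u})$ in terms of $\M\D1\vec{u}^2$ is valid only for diagonal $\M$. I would therefore state this product identity explicitly at the outset and invoke it in both parts. The remaining ingredients—skew-symmetry of $\M\D1$ and Lemmas~\ref{lem:1-M-Di} and~\ref{lem:1-in-left-kernel-of-ImD2inv}—are then routine, and no commutativity of $\D1$ and $\D2$ is needed, consistent with the remark that the BBM case does not require commuting operators.
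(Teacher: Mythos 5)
Your proposal is correct and follows essentially the same route as the paper's proof: Lemma~\ref{lem:1-in-left-kernel-of-ImD2inv} to strip the elliptic inverse for the linear invariant, cancellation of $(\I-\D2)(\I-\D2)^{-1}$ for the quadratic one, and then diagonality of $\M$ together with skew-symmetry of $\M\D1$ to make the split-form terms cancel. You merely spell out more explicitly the pointwise-product identity and the role of the diagonal mass matrix, which the paper leaves implicit.
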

\begin{proof}
  The linear invariant \eqref{eq:bbm-invariants-linear} is conserved
  since
  \begin{equation}
  \begin{aligned}
    \od{}{t} \vec{1}^T \M \vec{u}
    &=
    \vec{1}^T \M \partial_t \vec{u}
    =
    - \vec{1}^T \M (\I - \D2)^{-1} \left(
      \frac{1}{3} \D1 \vec{u}^2
      + \frac{1}{3} \vec{u} \D1 \vec{u}
      + \D1 \vec{u}
    \right)
    \\
    &=
    - \vec{1}^T \M \left(
      \frac{1}{3} \D1 \vec{u}^2
      + \frac{1}{3} \vec{u} \D1 \vec{u}
      + \D1 \vec{u}
    \right)
    =
    0,
  \end{aligned}
  \end{equation}
  where Lemma~\ref{lem:1-in-left-kernel-of-ImD2inv} has been used.

  Since $\I - \D2$ is a symmetric operator, the semidiscrete rate of
  change of the quadratic invariant \eqref{eq:bbm-invariants-quadratic}
  is
  \begin{equation}
    \frac{1}{2} \od{}{t} \vec{u}^T \M (\I - \D2) \vec{u}
    =
    \vec{u}^T \M (\I - \D2) \partial_t \vec{u}
    =
    - \frac{1}{3} \vec{u}^T \M\D1 \vec{u}^2
    - \frac{1}{3} \vec{u}^T \M \vec{u} \D1 \vec{u}
    - \vec{u}^T \M\D1 \vec{u}
    =
    0.
    \qedhere
  \end{equation}
\end{proof}

\begin{remark}
\label{rem:koide2009nonlinear}
  Conservative linearly- and nonlinearly-implicit second-order finite difference
  schemes for the BBM equation based on the discrete variational
  derivative method were proposed in \cite{koide2009nonlinear}.
\end{remark}


\begin{remark}
\label{rem:bbm-dissipative}
  Starting from the conservative semidiscretization \eqref{eq:bbm-inv-periodic-SBP},
  energy-dissipative semidiscretizations can be constructed as well.
  For example, the linear term
  $(\I - \D2)^{-1} \D1 \vec{u}$
  can be replaced by
  $(\I - \D1{-} \D1{+})^{-1} \D1{-}$.
  Then, the linear invariant $J^{\text{BBM}}_1$ is still conserved and the contribution to the energy rate becomes
  $- \vec{u}^T \M (\I - \D1{-} \D1{+})^{-1} \D1{-} \vec{u} \le 0$
  because of Lemma~\ref{lem:D2-via-D1pm}.
  Additional dissipation can be introduced in DG schemes by applying a
  dissipative numerical flux such as Godunov's flux at interfaces for the
  nonlinear term. Another possibility is to add artificial dissipation
  proportional to $\D1{-} - \D1{+}$ to the nonlinear term on left hand side
  of \eqref{eq:bbm-inv-periodic-SBP}.
\end{remark}

\subsubsection{Convergence study in space}
\label{sec:bbm-periodic-manufactured-convergence}

To study convergence, the method of manufactured solutions \cite{roache2002code}
is applied to
\begin{equation}
\label{eq:periodic-manufactured}
  u(t,x) = \e^{t/2} \sin(2 \pi (x - t/2)), \qquad (x,t) \in [0,1]\times[0,1],
\end{equation}
with periodic boundary conditions.
Results for the semidiscretization \eqref{eq:bbm-inv-periodic-SBP}
are shown in Figure~\ref{fig:bbm-periodic-manufactured-convergence}.

\begin{figure}[htbp]
\centering
  \begin{subfigure}[t]{0.49\textwidth}
  \centering
    \includegraphics[width=\textwidth]{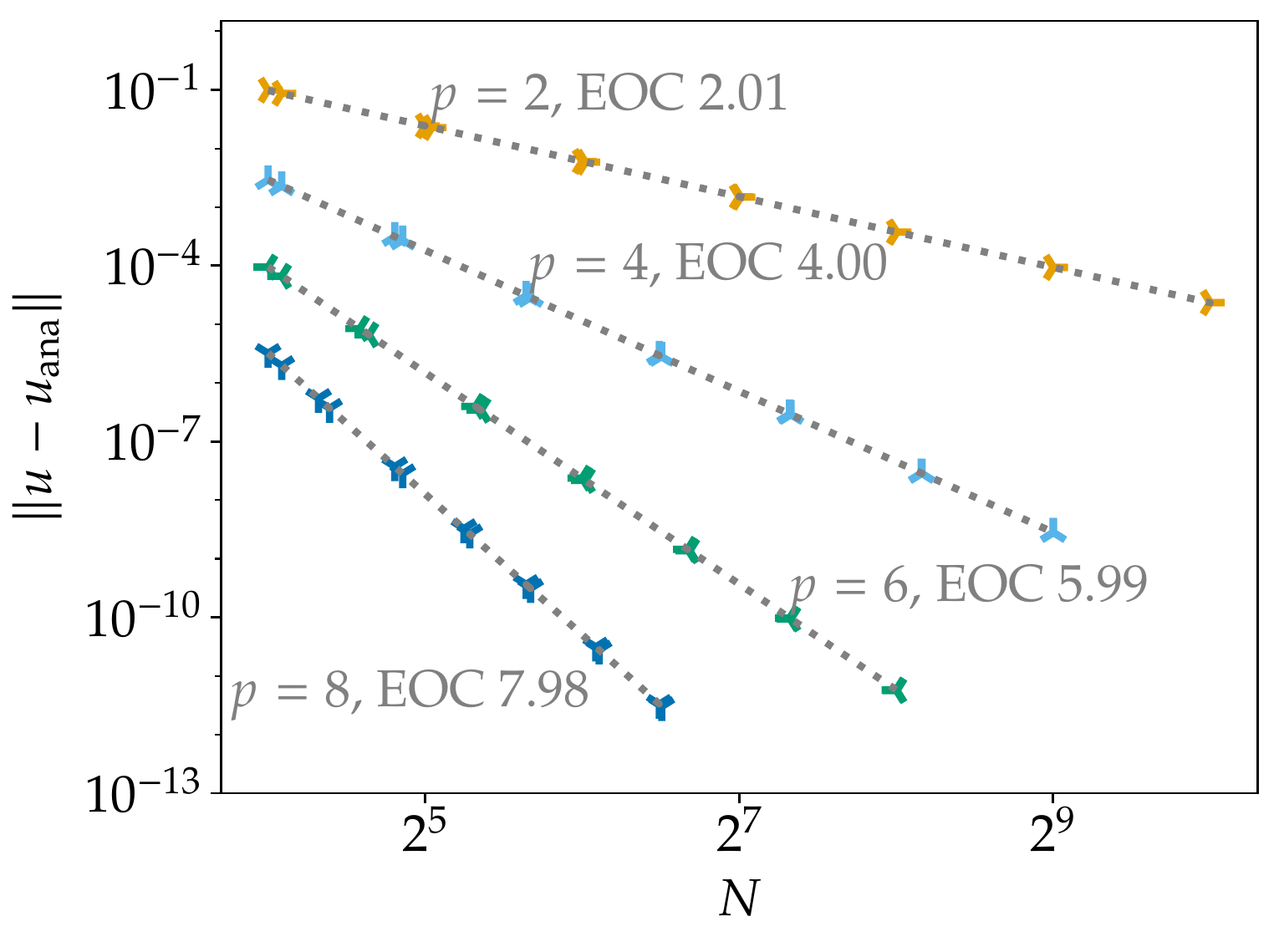}
    \caption{Finite difference methods, $\D2 = \D1^2$.}
  \end{subfigure}%
  \hspace*{\fill}
  \begin{subfigure}[t]{0.49\textwidth}
  \centering
    \includegraphics[width=\textwidth]{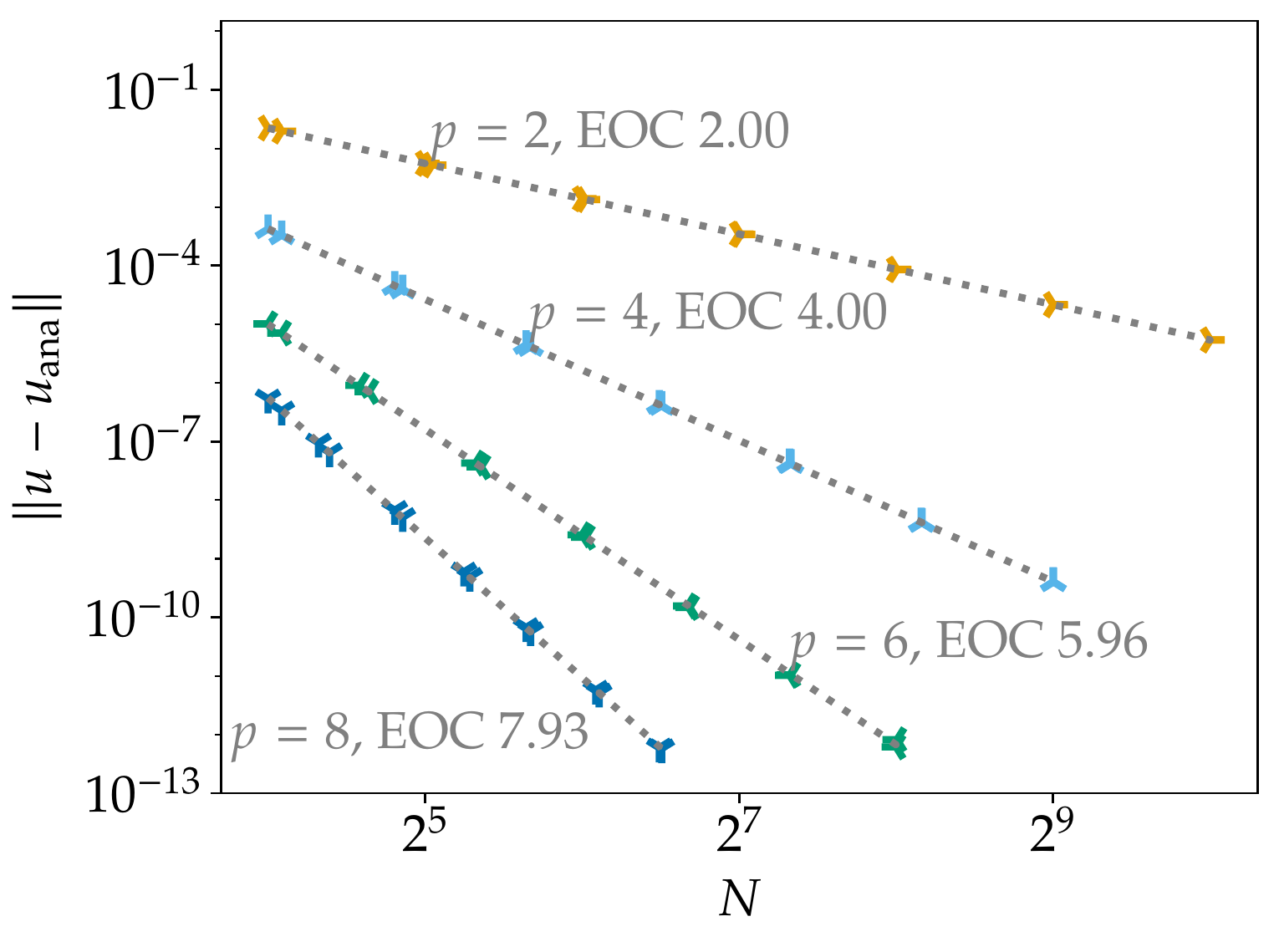}
    \caption{FD methods with narrow-stencil $\D2$.}
  \end{subfigure}%
  \\
  \begin{subfigure}[t]{0.8\textwidth}
  \centering
    \includegraphics[width=\textwidth]{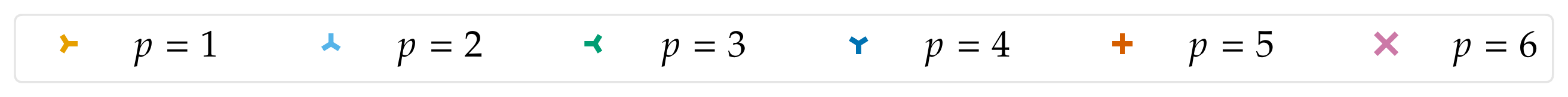}
  \end{subfigure}%
  \\
  \begin{subfigure}[t]{0.49\textwidth}
  \centering
    \includegraphics[width=\textwidth]{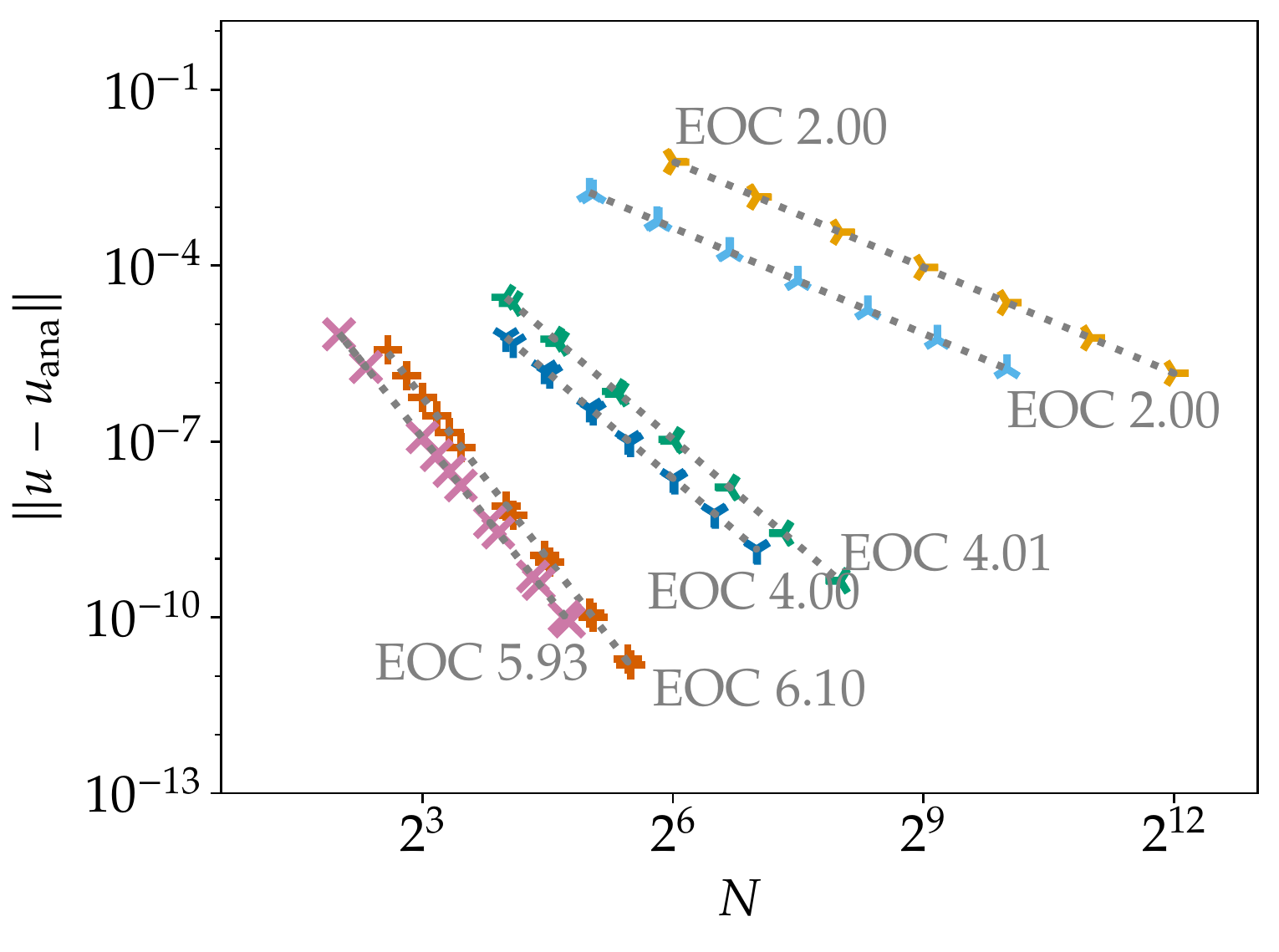}
    \caption{Continuous Galerkin methods, $\D2 = \D1^2$.}
  \end{subfigure}%
  \hspace*{\fill}
  \begin{subfigure}[t]{0.49\textwidth}
  \centering
    \includegraphics[width=\textwidth]{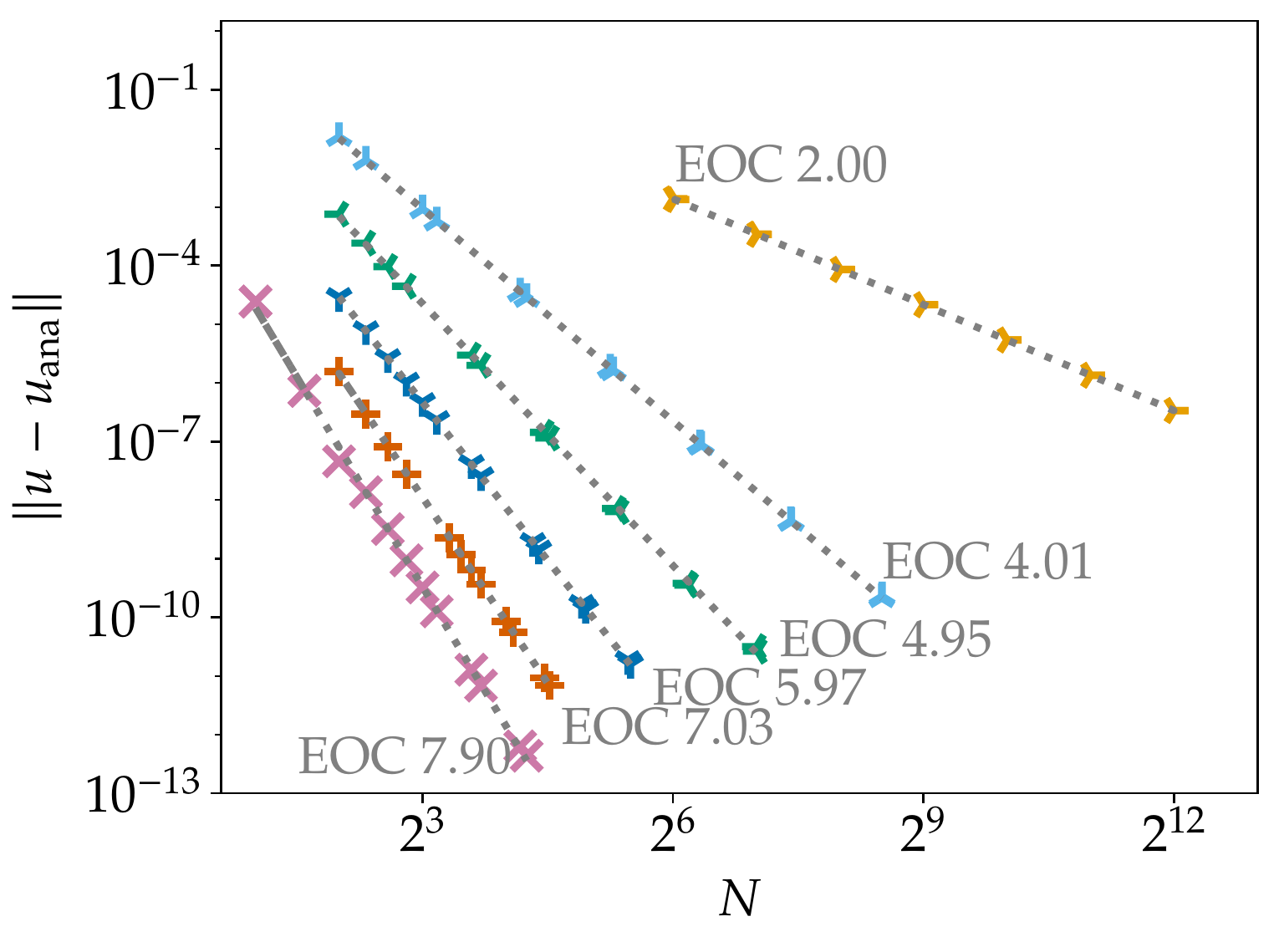}
    \caption{CG methods, narrow-stencil $\D2$.}
  \end{subfigure}%
  \\
  \begin{subfigure}[t]{0.49\textwidth}
  \centering
    \includegraphics[width=\textwidth]{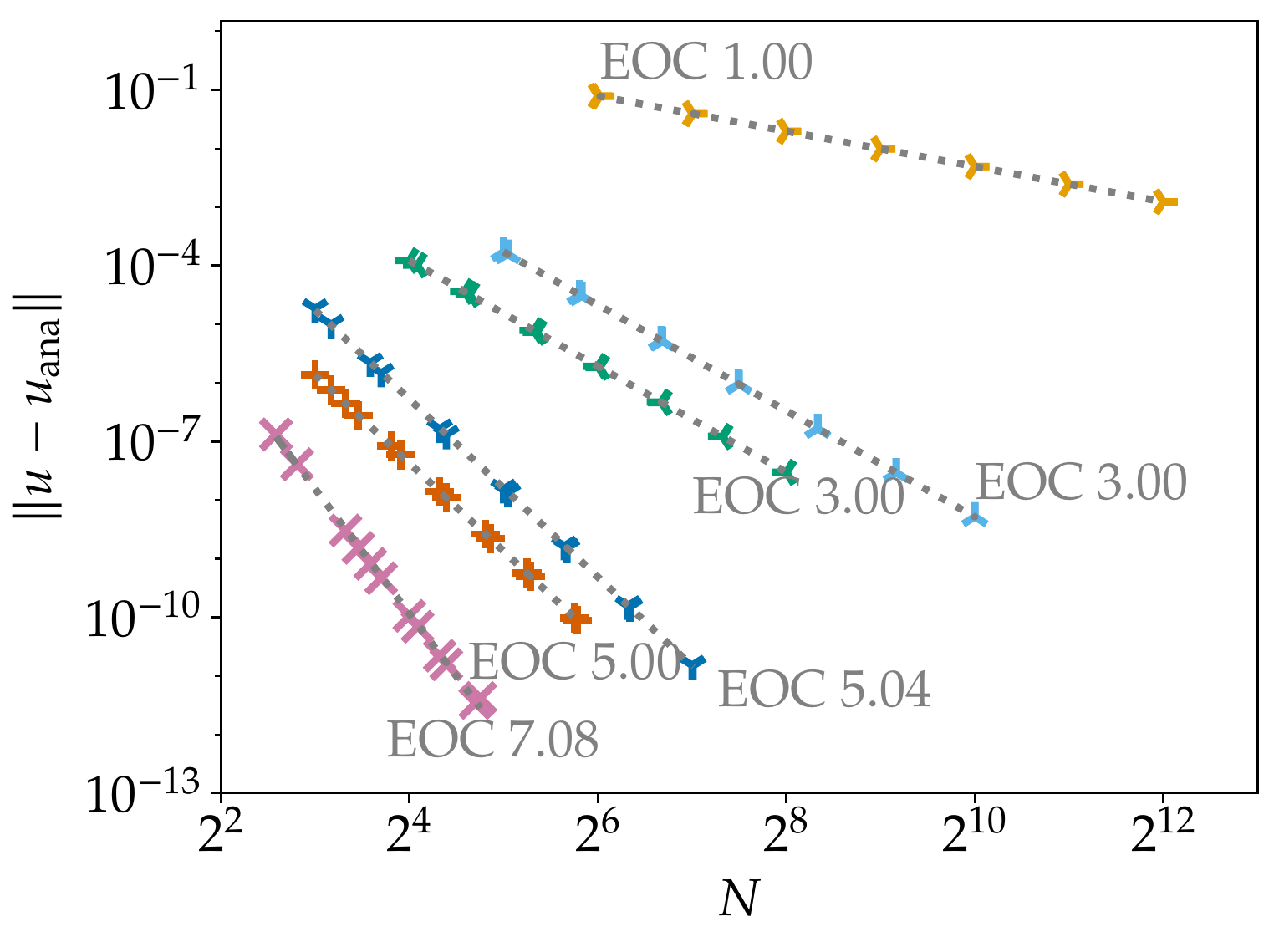}
    \caption{Discontinuous Galerkin methods, $\D2 = \D1^2$.}
  \end{subfigure}%
  \hspace*{\fill}
  \begin{subfigure}[t]{0.49\textwidth}
  \centering
    \includegraphics[width=\textwidth]{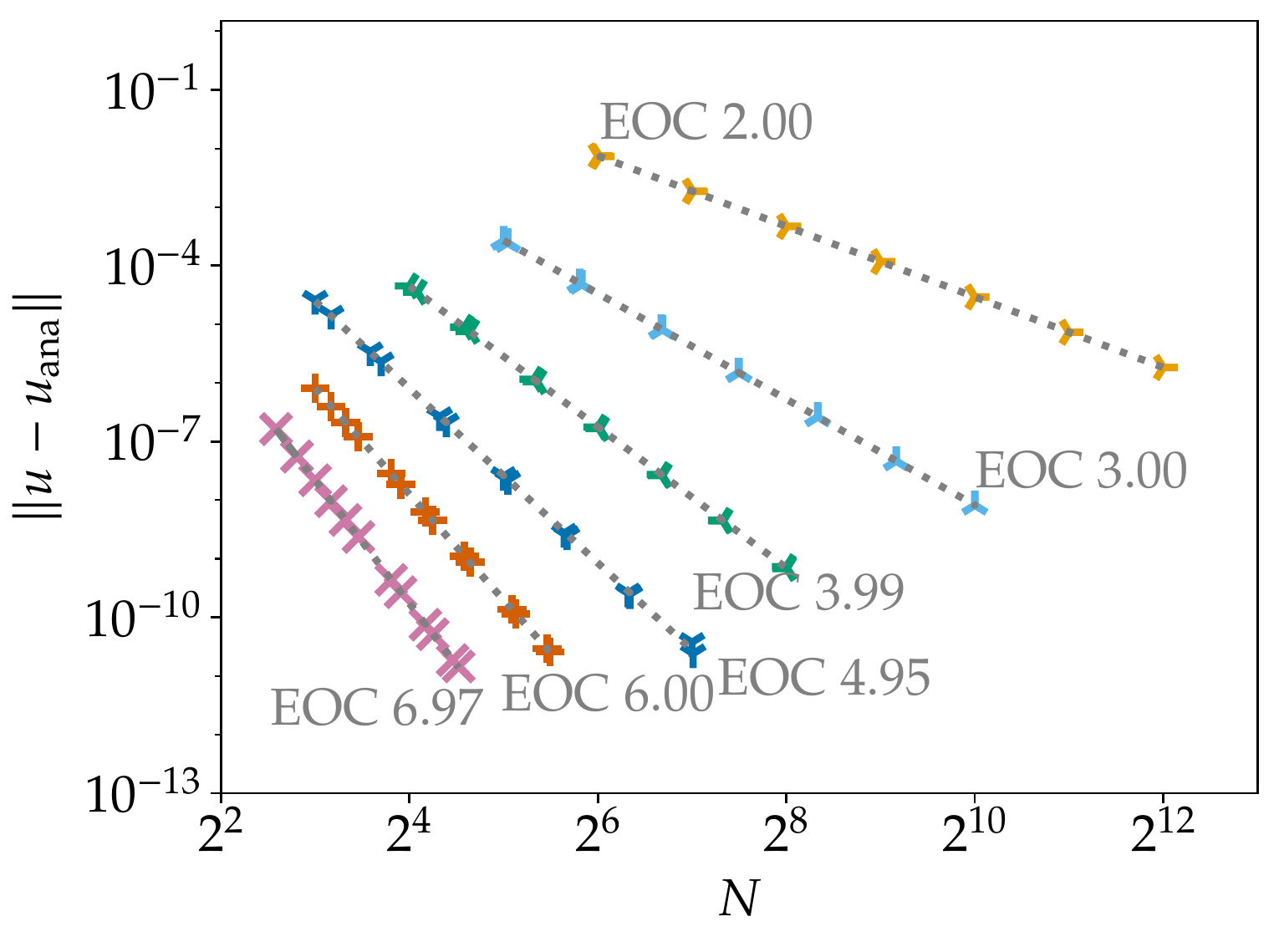}
    \caption{DG methods, $\D2 = \D1{+} \D1{-}$.}
  \end{subfigure}%
  \caption{Convergence results of the spatial semidiscretizations
           \eqref{eq:bbm-inv-periodic-SBP}
           for the manufactured solution \eqref{eq:periodic-manufactured}
           of the BBM equation.
           All of these semidiscretizations conserve the linear and quadratic
           invariants \eqref{eq:bbm-invariants} of the BBM equation
           \eqref{eq:bbm-dir}.}
  \label{fig:bbm-periodic-manufactured-convergence}
\end{figure}

For central finite difference methods with order of accuracy $p$, we observe
an order of convergence approximately equal to $p$.
The results for wide-stencil and narrow-stencil
second-derivative operators are similar but the narrow-stencil operators
result in errors that are smaller by up to an order of magnitude.

For nodal continuous Galerkin methods using Lobatto-Legendre bases, the
results depend on the choice of the second-derivative operator.
Wide-stencil operators $\D2 = \D1^2$ with polynomial degree $p$ yield
$\text{EOC} \approx p+1$ for $p$ odd and
$\text{EOC} \approx p$ for $p$ even.
In contrast, the usual narrow-stencil approximation (Theorem~\ref{thm:D2-CG})
results in an
$\text{EOC} \approx p+1$ for $p = 1$ and an
$\text{EOC} \approx p+2$ for $p > 1$.

A similar observation can be made for nodal discontinuous Galerkin methods.
There, wide-stencil operators $\D2 = \D1^2$ yield
$\text{EOC} \approx p+1$ for even polynomial degrees $p$ and
$\text{EOC} \approx p$ for odd $p$.
The narrow-stencil LDG operator $\D2 = \D1{+} \D1{-}$ results in an
$\text{EOC} \approx p+1$ for all $p$.

\subsubsection{Conservation of invariants}
\label{sec:bbm-error-invariants}

To test the conservation properties of the scheme, we use
the traveling wave solution
\begin{equation}
\label{eq:bbm-traveling-wave}
  u(t,x) = A \cosh( K (x - c t) ),
  \quad A = 3 (c - 1),
  \quad K = \frac{1}{2} \sqrt{1 - 1 / c},
\end{equation}
with speed $c = 1.2$ in the periodic domain $[-90, 90]$.
The classical fourth-order Runge-Kutta method RK4 \cite{kutta1901beitrag}
is used with relaxation (as described in Section \ref{sec:relaxation})
to enforce conservation of $J^{\text{BBM}}_2$.
Results for all spatial discretizations show conservation of the
linear and quadratic invariants, to within roundoff error.
Interestingly, applying relaxation to conserve the quadratic invariant
\eqref{eq:bbm-invariants-quadratic} improves the evolution of the cubic
invariant \eqref{eq:bbm-invariants-cubic} as well.  An example showing
this behavior, with a Fourier pseudospectral method in space, can be seen in Figure~\ref{fig:bbm-error-invariants}.
Results for other spatial discretizations are similar.

Moreover, the deviation of the cubic invariant seems to be bounded and
approximately constant. This indicates that the remaining error of the
energy-conservative relaxation method is mainly a phase error and not an
amplitude/shape error.

\begin{figure}[htbp]
\centering
  \includegraphics[width=0.9\textwidth]{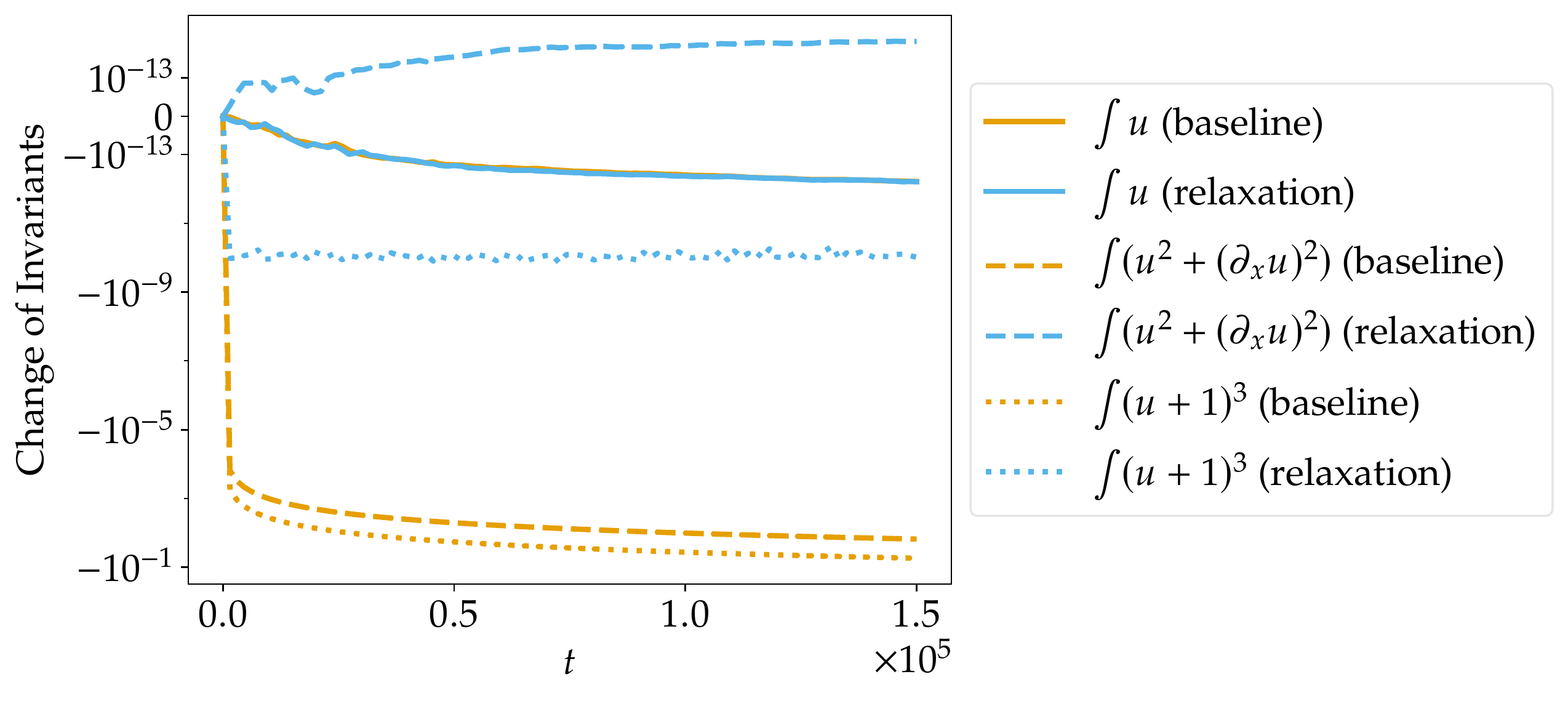}
  \caption{Temporal change in the invariants computed using RK4 with
           and without relaxation to preserve the quadratic invariant
           \eqref{eq:bbm-invariants-quadratic} for energy-conservative Fourier
           collocation semidiscretizations of the traveling wave solution
           \eqref{eq:bbm-traveling-wave} of the BBM equation \eqref{eq:bbm-dir}.}
  \label{fig:bbm-error-invariants}
\end{figure}

\subsection{Fornberg-Whitham equation}
\label{sec:fw}

Consider the Fornberg-Whitham equation \cite{whitham1967variational}
\begin{equation}
\label{eq:fw-dir}
\begin{aligned}
  (\I - \partial_x^2) \partial_t u(t,x)
  + (\I - \partial_x^2) \partial_x f(u(t,x))
  + \partial_x u(t,x)
  &= 0,
  && t \in (0, T), x \in (\xmin, \xmax),
  \\
  u(0, x) &= u^0(x),
  && x \in [\xmin, \xmax],
  \\
  f(u) &= \frac{u^2}{2},
\end{aligned}
\end{equation}
with periodic boundary conditions, which can also be written as
\begin{equation}
\label{eq:fw-inv}
  \partial_t u(t,x) + \partial_x f(u(t,x)) + (\I - \partial_{x,P}^2)^{-1} \partial_x u(t,x) = 0,
\end{equation}
where $(\I - \partial_{x,P}^2)^{-1}$ is the inverse of the elliptic
operator $\I - \partial_x^2$ with periodic boundary conditions.
The functionals
\begin{subequations}
\label{eq:fw-invariants}
\begin{align}
\label{eq:fw-invariants-mass}
  J^{\text{FW}}_1(u) &= \int_{\xmin}^{\xmax} u,
  \\
\label{eq:fw-invariants-linear}
  J^{\text{FW}}_2(u) &= \int_{\xmin}^{\xmax} (u - \partial_x^2 u),
  \\
\label{eq:fw-invariants-energy}
  J^{\text{FW}}_3(u) &= \int_{\xmin}^{\xmax} u^2,
\end{align}
\end{subequations}
are invariants of solutions.
In the following, we will construct numerical methods that conserve
all invariants \eqref{eq:fw-invariants}.

\subsubsection{Conservative numerical methods}

The form of the nonlinearity is very similar to the BBM equation. The only
difference is the additional pre-multiplication by the elliptic operator
$\I - \partial_x^2$, which results in different invariants. Hence, basically
the same split form
\begin{equation}
\label{eq:fw-inv-periodic-SBP}
  \partial_t \vec{u}
  + \frac{1}{3} \D1 \vec{u}^2
  + \frac{1}{3} \mat{u} \D1 \vec{u}
  + (\I - \D2)^{-1} \D1 \vec{u}
  =
  \vec{0}
\end{equation}
for $q = 2$ can be used to conserve the invariants \eqref{eq:fw-invariants}.
For general $q \in \N$, similar splittings can be used
\cite[Section~4.5]{ranocha2018thesis}. By applying the relaxation approach
to enforce conservation of $J^{\text{FW}}_3$ in time, we obtain a
fully-discrete scheme that conserves all three invariants \eqref{eq:fw-invariants}.

\begin{theorem}
  If $\D1$ is a periodic first-derivative SBP operator
  with diagonal mass matrix $\M$ and
  $\D2$ is  a periodic second-derivative SBP operator,
  then the semidiscretization \eqref{eq:fw-inv-periodic-SBP} conserves
  the linear invariants \eqref{eq:fw-invariants-mass} and
  \eqref{eq:fw-invariants-linear} of the Fornberg-Whitham equation
  \eqref{eq:fw-dir} with $q = 2$.
  If $\D1$ \& $\D2$ commute, the quadratic invariant
  \eqref{eq:fw-invariants-energy} is also conserved.
\end{theorem}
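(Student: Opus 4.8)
The plan is to work throughout with the discrete analogues $J_1 = \vec{1}^T \M \vec{u}$, $J_2 = \vec{1}^T \M (\I - \D2) \vec{u}$, and $J_3 = \vec{u}^T \M \vec{u}$ of the three invariants \eqref{eq:fw-invariants-mass}, \eqref{eq:fw-invariants-linear}, \eqref{eq:fw-invariants-energy}, to differentiate each in time, and to substitute the right-hand side of the semidiscretization \eqref{eq:fw-inv-periodic-SBP}. For the mass $J_1$ I would compute
\begin{equation*}
  \od{}{t} \vec{1}^T \M \vec{u}
  =
  - \vec{1}^T \M \left(
    \tfrac{1}{3} \D1 \vec{u}^2 + \tfrac{1}{3} \vec{u} \D1 \vec{u} + (\I - \D2)^{-1} \D1 \vec{u}
  \right)
\end{equation*}
and argue that each of the three terms vanishes separately. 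The first is $\vec{1}^T \M \D1 \vec{u}^2 = 0$ by Lemma~\ref{lem:1-M-Di}. For the second, diagonality of $\M$ lets me rewrite $\vec{1}^T \M (\vec{u} \D1 \vec{u}) = \vec{u}^T \M \D1 \vec{u}$, which vanishes because $\M \D1$ is skew-symmetric by the defining property \eqref{eq:D1-periodic}. For the third, Lemma~\ref{lem:1-in-left-kernel-of-ImD2inv} gives $\vec{1}^T \M (\I - \D2)^{-1} = \vec{1}^T \M$, reducing it once more to $\vec{1}^T \M \D1 \vec{u} = 0$ via Lemma~\ref{lem:1-M-Di}.

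Conservation of the second linear invariant $J_2$ I would obtain essentially for free: since $\vec{1}^T \M \D2 = \vec{0}^T$ by Lemma~\ref{lem:1-M-Di}, the functional $J_2 = \vec{1}^T \M \vec{u} - \vec{1}^T \M \D2 \vec{u}$ coincides with $J_1$ \emph{identically} (the subtracted term is the zero scalar at all times), so the computation above already yields $\od{}{t} J_2 = 0$.

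The quadratic invariant $J_3$ is the substantive case. Differentiating and substituting gives
\begin{equation*}
  \tfrac{1}{2} \od{}{t} \vec{u}^T \M \vec{u}
  =
  - \vec{u}^T \M \left(
    \tfrac{1}{3} \D1 \vec{u}^2 + \tfrac{1}{3} \vec{u} \D1 \vec{u}
  \right)
  - \vec{u}^T \M (\I - \D2)^{-1} \D1 \vec{u}.
\end{equation*}
For the nonlinear part I would invoke the Burgers split-form cancellation of \eqref{eq:burgers-splitting}: diagonality of $\M$ gives $\vec{u}^T \M (\vec{u} \D1 \vec{u}) = (\vec{u}^2)^T \M \D1 \vec{u}$, while skew-symmetry of $\M \D1$ gives $\vec{u}^T \M \D1 \vec{u}^2 = -(\vec{u}^2)^T \M \D1 \vec{u}$, so the two cubic terms cancel exactly. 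The dispersive term is precisely where the commuting hypothesis enters: by Lemma~\ref{lem:D1-D2-commuting} the matrix $\M (\I - \D2)^{-1} \D1$ is skew-symmetric, hence $\vec{u}^T \M (\I - \D2)^{-1} \D1 \vec{u} = 0$.

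I expect the only genuine subtlety to be pinpointing exactly where commutativity is used. The nonlinear cancellation and both linear invariants require nothing beyond the periodic SBP structure and the diagonal mass matrix; it is solely the skew-symmetry of $\M (\I - \D2)^{-1} \D1$ — equivalently Lemma~\ref{lem:D1-D2-commuting} — that forces the commutativity assumption. This also explains structurally why $J_3$ needs the extra hypothesis while $J_1, J_2$ do not.
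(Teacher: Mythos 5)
Your proof is correct and follows essentially the same route as the paper: Lemma~\ref{lem:1-M-Di} and Lemma~\ref{lem:1-in-left-kernel-of-ImD2inv} together with diagonality of $\M$ for the linear invariants, the Burgers-type cancellation of the cubic terms, and Lemma~\ref{lem:D1-D2-commuting} as the sole point where commutativity enters. The only (valid) shortcut is your observation that the discrete $J_2$ coincides identically with $J_1$ because $\vec{1}^T \M \D2 = \vec{0}^T$, whereas the paper recomputes its rate of change term by term.
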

\begin{proof}
  The first invariant (total mass) is conserved, since
  \begin{equation}
  \begin{aligned}
    \vec{1}^T \M \partial_t \vec{u}
    &=
    - \frac{1}{3} \vec{1}^T \M \D1 \vec{u}^2
    - \frac{1}{3} \vec{1}^T \M \mat{u} \D1 \vec{u}
    - \vec{1}^T \M (\I - \D2)^{-1} \D1 \vec{u}
    \\
    &=
    - \frac{1}{3} \vec{1}^T \M \D1 \vec{u}^2
    - \frac{1}{3} \vec{u}^T \M \D1 \vec{u}
    - \vec{1}^T \M \D1 \vec{u}
    \\
    &=
    - \frac{1}{6} \vec{u}^T \M \D1 \vec{u}
    + \frac{1}{6} \vec{u}^T \D1^T \M \vec{u}
    =
    0,
  \end{aligned}
  \end{equation}
  where we have used that $\M$ is diagonal and have applied
  Lemma~\ref{lem:1-in-left-kernel-of-ImD2inv} in the second line.
  Lemma~\ref{lem:1-M-Di} has been applied in the third line.
  The conservation of the second invariant can be obtained similarly
  by applying Lemma~\ref{lem:1-M-Di}, resulting in
  \begin{equation}
    \vec{1}^T \M (\I - \D2) \partial_t \vec{u}
    =
    - \frac{1}{3} \vec{1}^T \M (\I - \D2) \D1 \vec{u}^2
    - \frac{1}{3} \vec{1}^T \M (\I - \D2) \mat{u} \D1 \vec{u}
    - \vec{1}^T \M \D1 \vec{u}
    =
    0.
  \end{equation}
  To prove conservation of the third invariant (total energy), compute
  \begin{equation}
  \begin{aligned}
    \vec{u}^T \M \partial_t \vec{u}
    &=
    - \frac{1}{3} \vec{u}^T \M \D1 \vec{u}^2
    - \frac{1}{3} \vec{u}^T \M \mat{u} \D1 \vec{u}
    - \vec{u}^T \M (\I - \D2)^{-1} \D1 \vec{u}
    \\
    &=
    - \frac{1}{3} \vec{u}^T \M \D1 \vec{u}^2
    - \frac{1}{3} (\vec{u}^2)^T \M \D1 \vec{u}
    =
    0,
  \end{aligned}
  \end{equation}
  where Lemma~\ref{lem:D1-D2-commuting} has been applied in the
  second line. Here, we need that $\D1$ \& $\D2$ commute.
\end{proof}


To test conservation, we use a smooth traveling wave solution with speed
$c = 1.2$ computed numerically using the Petviashvili method
\cite{petviashvili1976equation} in the periodic domain $[-80, 80]$.
As expected the scheme conserves all linear and the chosen nonlinear invariant
up to roundoff errors.

\subsubsection{Convergence study in space}
\label{sec:fw-periodic-manufactured-convergence}

To study convergence, the method of manufactured solutions is applied,
again with the prescribed solution \eqref{eq:periodic-manufactured}
with periodic boundary conditions.
The results are shown in Figure~\ref{fig:fw-periodic-manufactured-convergence}.

Here, central finite difference methods with order of accuracy $p$ yield an
$\text{EOC}$ between $p - \nicefrac{1}{2}$ and $p$. For other test problems
such as traveling wave profiles, the $\text{EOC}$ is closer to $p$.
The results for wide-stencil and narrow-stencil second-derivative operators
are similar but the narrow-stencil operators result in smaller errors
(smaller by less than an order of magnitude).

In contrast to results for the BBM equation, the choice of the second-derivative
operator does not influence the $\text{EOC}$ for CG methods significantly.
Both wide and narrow-stencil operators $\D2$ yield
$\text{EOC}$ between $p + \nicefrac{1}{2}$ and $p + 1$ for odd polynomial
degrees $p$ and
$\text{EOC} \approx p$ for even $p$.
However, only the wide-stencil operators conserve the quadratic invariant.

A similar observation can be made for nodal discontinuous Galerkin methods.
There, both types of operators $\D2$ yield
$\text{EOC} \approx p+1$ for even polynomial degrees $p$ and
$\text{EOC} \approx p$ for odd $p$.
Again, only the wide-stencil operators conserve the quadratic invariant.

\begin{figure}[htbp]
\centering
  \begin{subfigure}[t]{0.49\textwidth}
  \centering
    \includegraphics[width=\textwidth]{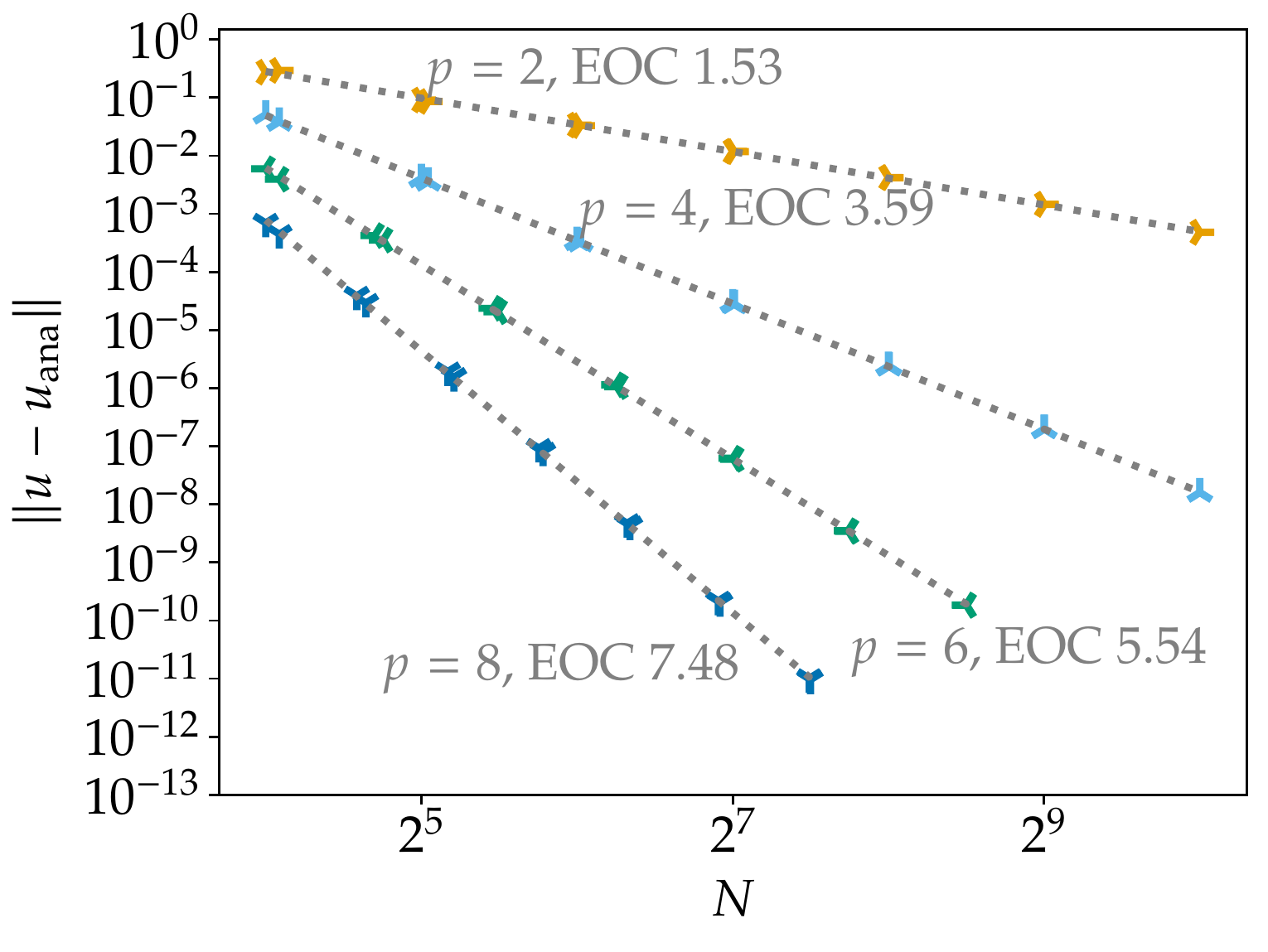}
    \caption{Finite difference methods, wide stencil $\D2 = \D1^2$.}
  \end{subfigure}%
  \hspace*{\fill}
  \begin{subfigure}[t]{0.49\textwidth}
  \centering
    \includegraphics[width=\textwidth]{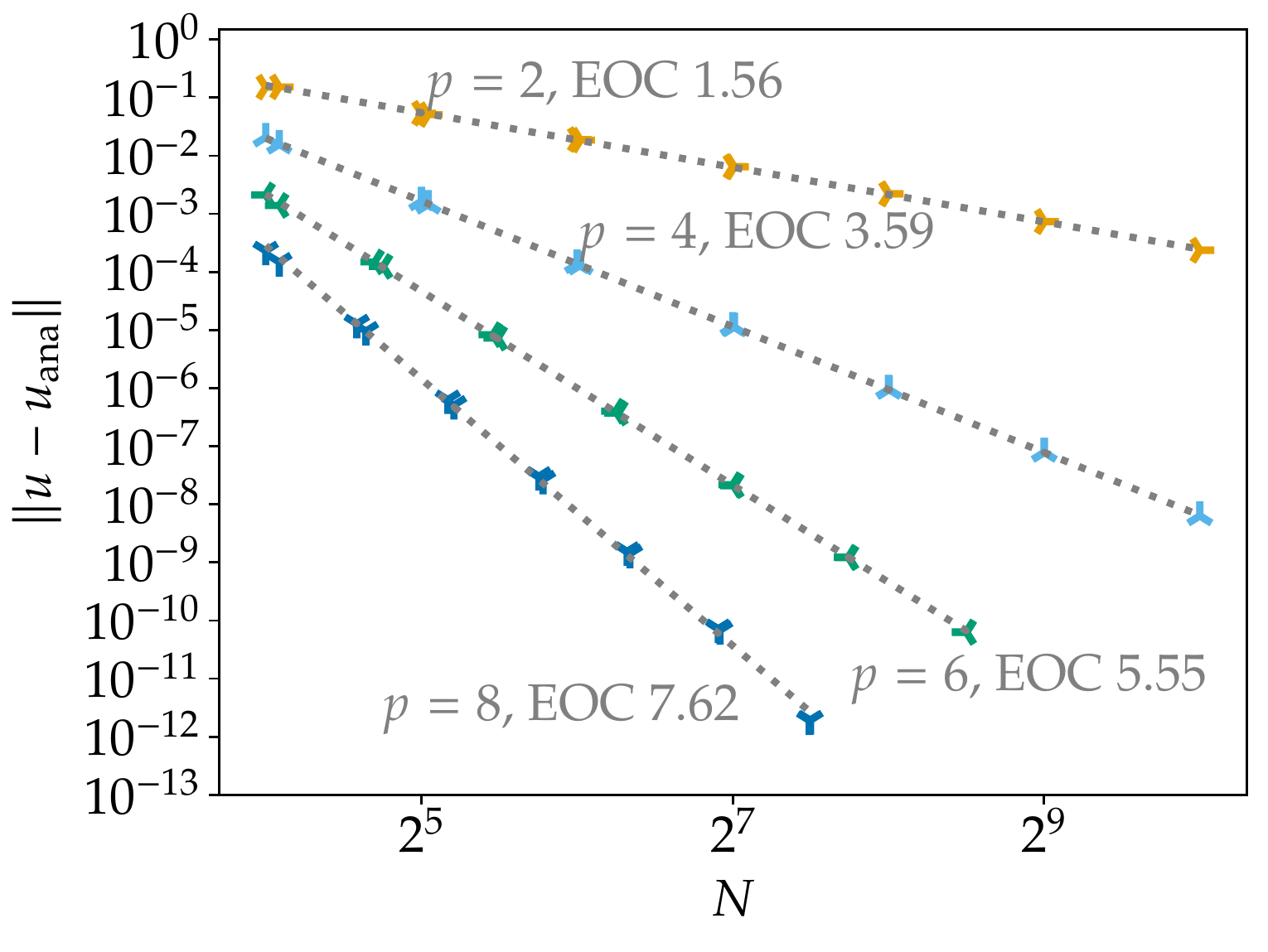}
    \caption{Finite difference methods, narrow stencil $\D2$.}
  \end{subfigure}%
  \\
  \begin{subfigure}[t]{0.8\textwidth}
  \centering
    \includegraphics[width=\textwidth]{figures/Galerkin_legend_p1_p6}
  \end{subfigure}%
  \\
  \begin{subfigure}[t]{0.49\textwidth}
  \centering
    \includegraphics[width=\textwidth]{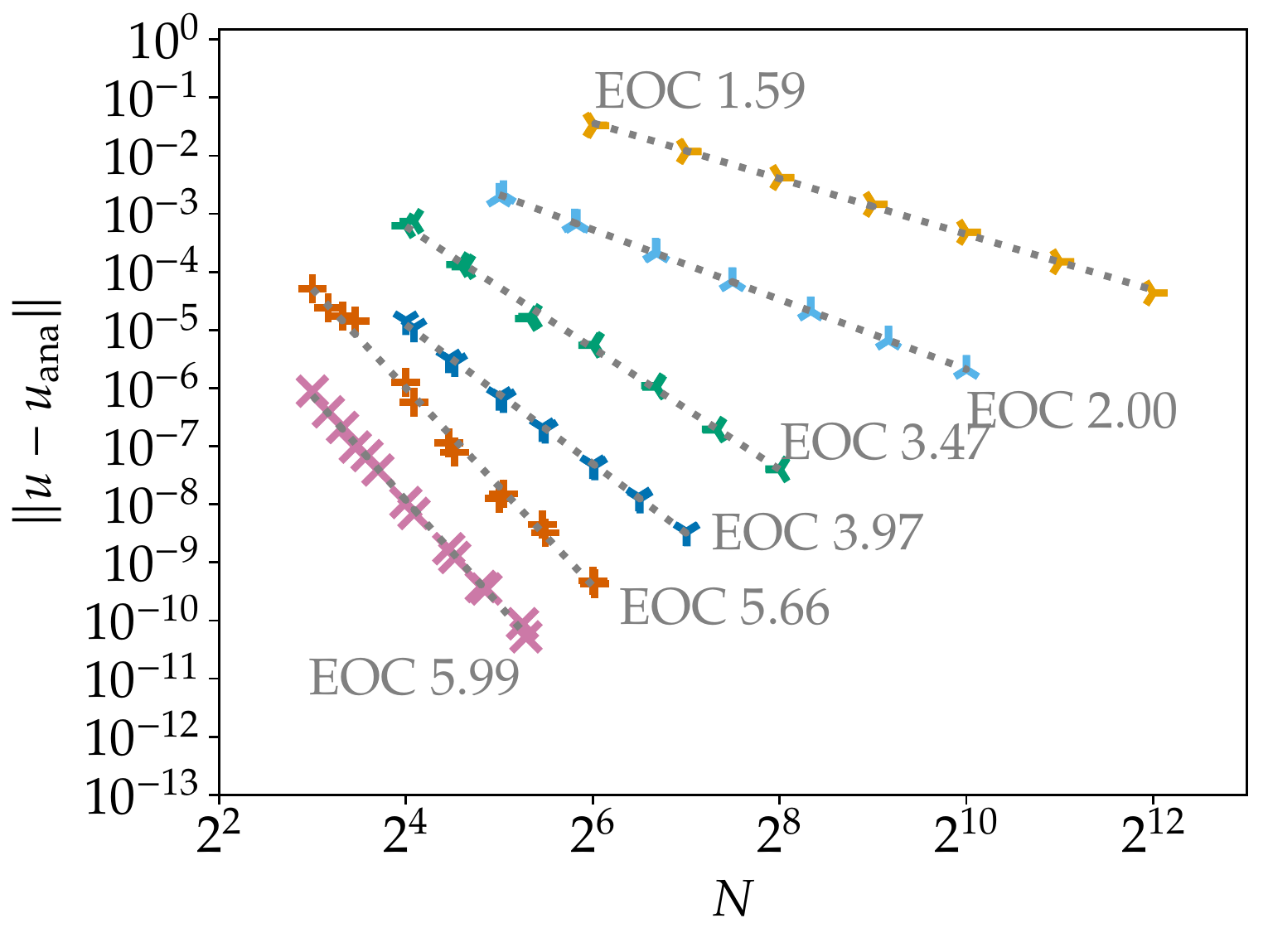}
    \caption{Continuous Galerkin methods, $\D2 = \D1^2$.}
  \end{subfigure}%
  \hspace*{\fill}
  \begin{subfigure}[t]{0.49\textwidth}
  \centering
    \includegraphics[width=\textwidth]{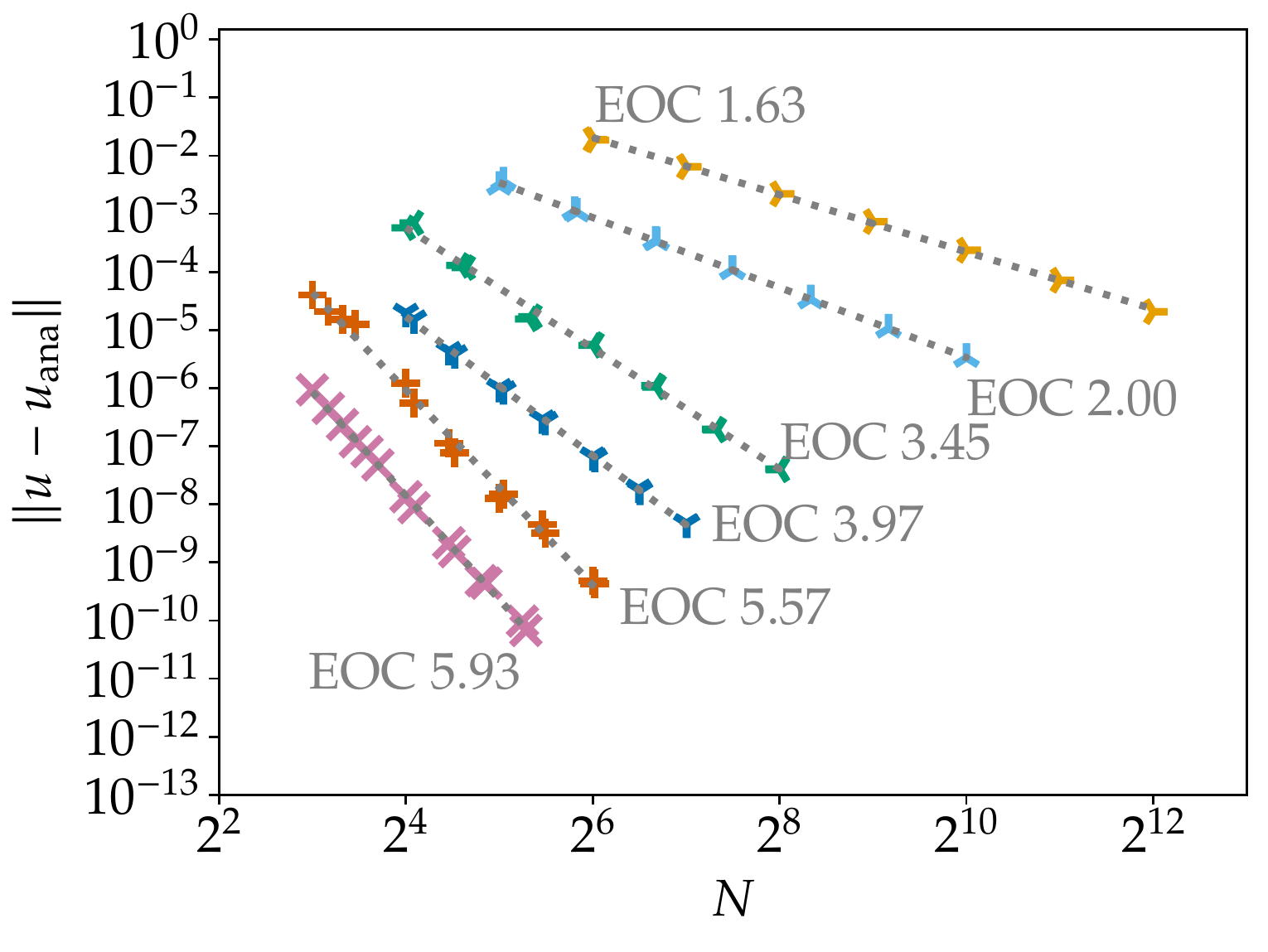}
    \caption{Continuous Galerkin methods, narrow stencil $\D2$.}
  \end{subfigure}%
  \\
  \begin{subfigure}[t]{0.49\textwidth}
  \centering
    \includegraphics[width=\textwidth]{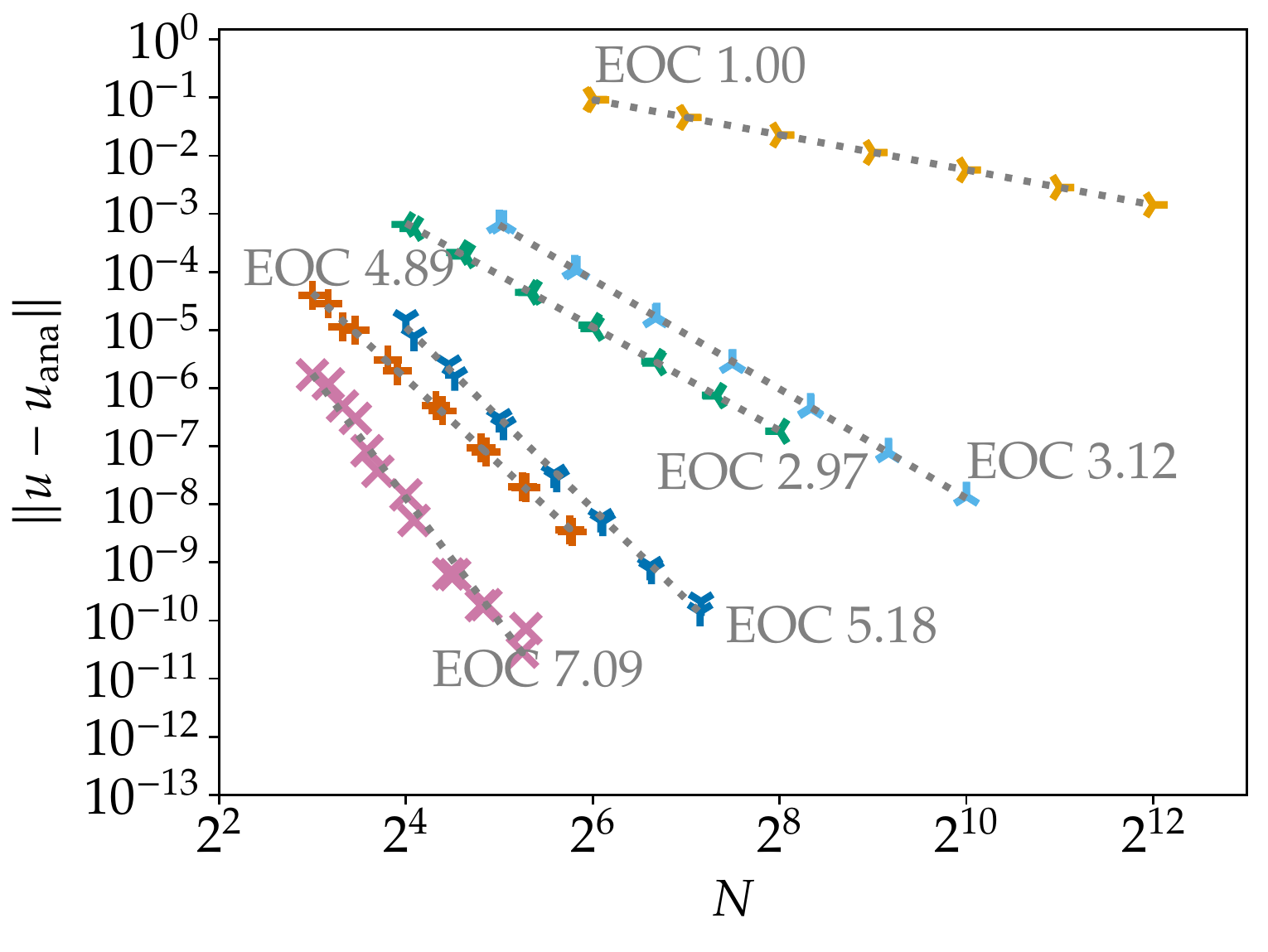}
    \caption{Discontinuous Galerkin methods, $\D2 = \D1^2$.}
  \end{subfigure}%
  \hspace*{\fill}
  \begin{subfigure}[t]{0.49\textwidth}
  \centering
    \includegraphics[width=\textwidth]{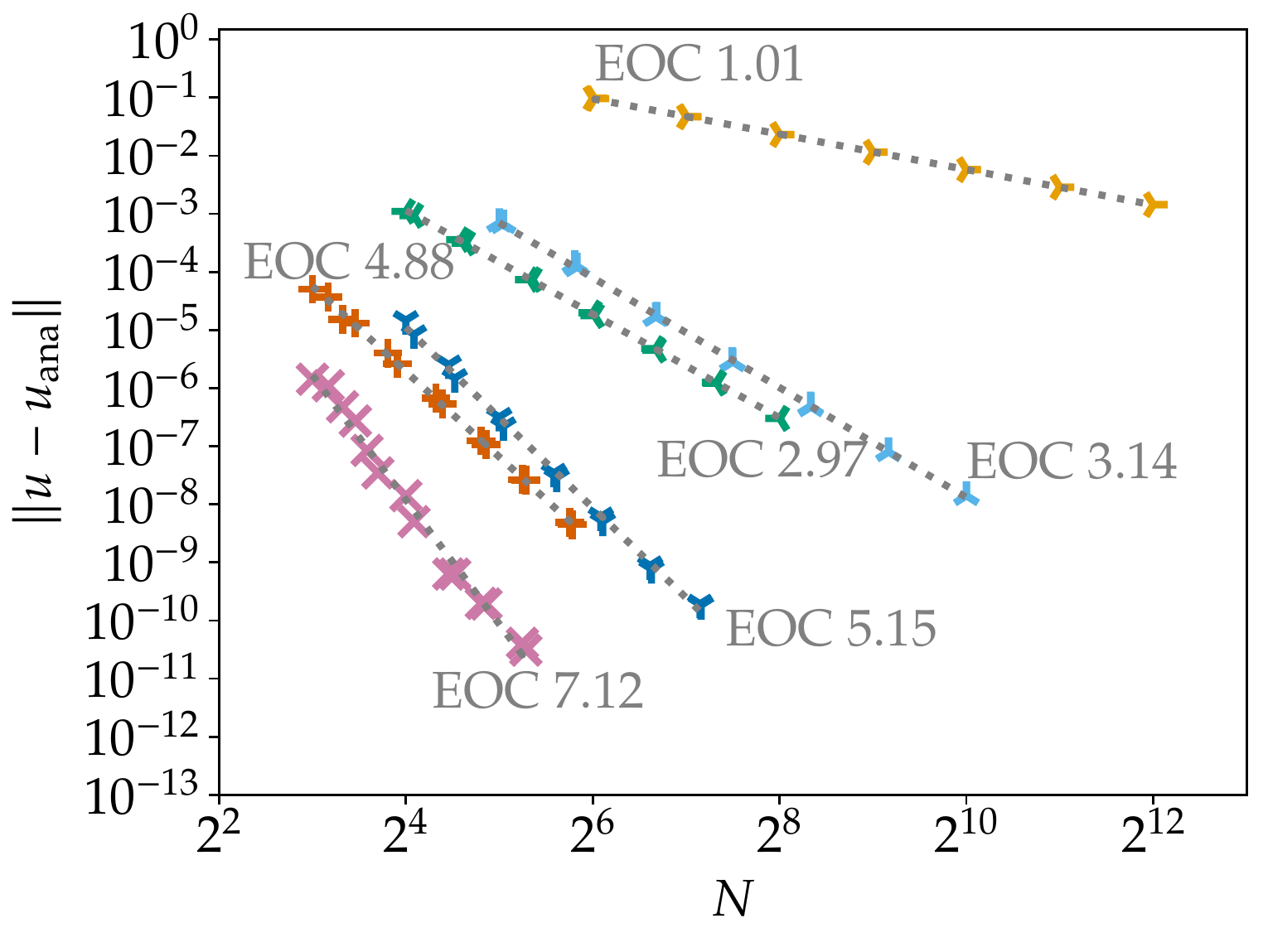}
    \caption{Discontinuous Galerkin methods, $\D2 = \D1{+} \D1{-}$.}
  \end{subfigure}%
  \caption{Convergence results of the spatial semidiscretizations
           \eqref{eq:fw-inv-periodic-SBP}
           for the manufactured solution \eqref{eq:periodic-manufactured}
           of the FW equation.
           All of these semidiscretizations conserve the linear invariants
           \eqref{eq:fw-invariants} of the FW equation \eqref{eq:fw-dir}.
           The FD methods and the Galerkin methods with wide stencil $\D2$
           conserve the quadratic invariant as well.}
  \label{fig:fw-periodic-manufactured-convergence}
\end{figure}

\subsection{Camassa-Holm equation}
\label{sec:ch}

Consider the Camassa-Holm equation \cite{camassa1993integrable}
\begin{equation}
\label{eq:ch-dir}
\begin{aligned}
  (\I - \partial_x^2) \partial_t u(t,x)
  + \partial_x \biggl(
    \frac{3}{2} u(t,x)^2
    - \frac{1}{2} (\partial_x u(t,x))^2
    - u(t,x) \partial_x^2 u(t,x)
  \biggr)
  &= 0,
  \\& t \in (0, T), x \in (\xmin, \xmax),
  \\
  u(0, x) &= u^0(x),
  \\& x \in [\xmin, \xmax],
\end{aligned}
\end{equation}
with periodic boundary conditions, which can also be written as
\begin{equation}
\label{eq:ch-inv}
  \partial_t u
  + (\I - \partial_{x,P}^2)^{-1} \bigl(
    \partial_x u^2
    + u \partial_x u
    - \alpha \partial_x (u \partial_x^2 u)
    - (1 - \alpha) \partial_x^2 (u \partial_x u)
    - (2 \alpha - 1) (\partial_x u) (\partial_x^2 u)
  \bigr)
  = 0,
\end{equation}
where $(\I - \partial_{x,P}^2)^{-1}$ is the inverse of the elliptic
operator $\I - \partial_x^2$ with periodic boundary conditions and
$\alpha \in \R$ is a parameter determining the split form.
The splitting of the quadratic term $\frac{3}{2} \partial_x u^2$ is the
same as for Burgers' equation \eqref{eq:burgers-splitting}. The one-parameter
split form of the third-derivative terms has been constructed using similar
manipulations.

Important invariants of solutions are
\begin{subequations}
\label{eq:ch-invariants}
\begin{align}
\label{eq:ch-invariants-linear}
  J^{\text{CH}}_1(u)
  &= \int_{\xmin}^{\xmax} u,
  \\
\label{eq:ch-invariants-quadratic}
  J^{\text{CH}}_2(u)
  &= \frac{1}{2} \int_{\xmin}^{\xmax} \bigl( u^2 + (\partial_x u)^2 \bigr)
  = \frac{1}{2} \int_{\xmin}^{\xmax} u (\I - \partial_x^2) u,
  \\
\label{eq:ch-invariants-cubic}
  J^{\text{CH}}_3(u)
  &= \int_{\xmin}^{\xmax} \bigl( u^3 + u (\partial_x u)^2 \bigr).
\end{align}
\end{subequations}
In the following, we will construct numerical methods that conserve
the linear \eqref{eq:ch-invariants-linear} and quadratic
\eqref{eq:ch-invariants-quadratic} invariants but not necessarily the
cubic invariant \eqref{eq:ch-invariants-cubic}.

\subsubsection{Conservative numerical methods}

Using the splitting as in \eqref{eq:ch-inv}, semidiscretizations
that conserve the linear and quadratic invariant can be obtained as
\begin{equation}
\label{eq:ch-inv-periodic-SBP}
  \partial_t \vec{u}
  + (\I - \D2{a})^{-1} \!\!\!\! \bigl(\!\!\!\!
    \D1 \vec{u}^2
    + \vec{u} \D1 \vec{u}
    - \alpha \D1 (\vec{u} \D2{b} \vec{u})
    - (1 - \alpha) \D2{b} (\vec{u} \D1 \vec{u})
    - (2 \alpha - 1) (\D1 \vec{u}) \D2{b} \vec{u}
  \!\!\!\bigr)\!
  =
  \vec{0}.
\end{equation}
\begin{theorem}
\label{thm:ch-inv-periodic-SBP}
  If $\D1$ is a periodic first-derivative SBP operator
  with diagonal mass matrix $\M$
  and $\D2{a}, \D2{b}$ are periodic second-derivative SBP operator,
  then the semidiscretization \eqref{eq:ch-inv-periodic-SBP}
  conserves the quadratic invariant \eqref{eq:ch-invariants-quadratic}
  of \eqref{eq:ch-dir}.
  If $\D1$ and $\D2{b}$ commute or $\alpha = \nicefrac{1}{2}$,
  the linear invariant \eqref{eq:ch-invariants-linear} is conserved
  as well.
\end{theorem}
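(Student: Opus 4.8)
The plan is to work with the discrete analogues of the two invariants, namely the quadratic functional $\frac{1}{2}\vec{u}^T\M(\I-\D2{a})\vec{u}$ and the linear functional $\vec{1}^T\M\vec{u}$, and to show that each has vanishing time derivative along solutions of \eqref{eq:ch-inv-periodic-SBP}. Since $\M\D2{a}=-\A2{a}$ is symmetric, the operator $\M(\I-\D2{a})$ is symmetric, so the rate of change of the quadratic invariant is $\vec{u}^T\M(\I-\D2{a})\partial_t\vec{u}$; substituting the scheme cancels the elliptic inverse and reduces the claim to showing that $\vec{u}^T\M$ applied to the five-term bracket in \eqref{eq:ch-inv-periodic-SBP} vanishes. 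I would treat the two Burgers-type terms and the three cubic (third-derivative) terms separately.

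For the pair $\D1\vec{u}^2+\vec{u}\D1\vec{u}$, diagonality of $\M$ lets me rewrite $\vec{u}^T\M(\vec{u}\D1\vec{u})=(\vec{u}^2)^T\M\D1\vec{u}=\vec{u}^T\D1^T\M\vec{u}^2$; adding $\vec{u}^T\M\D1\vec{u}^2$ and using the periodic SBP relation $\M\D1+\D1^T\M=0$ shows this contribution vanishes identically in $\alpha$, by the same mechanism that makes the split form \eqref{eq:burgers-splitting} conservative.

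The main obstacle is the cancellation of the three cubic terms, and here I would again exploit diagonality of $\M$ together with the symmetry $\M\D2{b}=\D2{b}^T\M$ to collapse each term onto the single scalar $S:=\sum_i \M_{ii}\,\vec{u}_i(\D1\vec{u})_i(\D2{b}\vec{u})_i$. Concretely, $-\alpha\,\vec{u}^T\M\D1(\vec{u}\D2{b}\vec{u})$ becomes $+\alpha S$ after transferring $\M\D1$ onto $\vec{u}$ via the periodic SBP property; the pointwise mixed term $-(2\alpha-1)\vec{u}^T\M((\D1\vec{u})\D2{b}\vec{u})$ is $-(2\alpha-1)S$ directly; and $-(1-\alpha)\vec{u}^T\M\D2{b}(\vec{u}\D1\vec{u})$ becomes $-(1-\alpha)S$ after moving $\D2{b}$ across using its symmetry. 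The coefficients sum to $\alpha-(2\alpha-1)-(1-\alpha)=0$, so the cubic contribution vanishes for every $\alpha$, completing the quadratic case. The delicate point is precisely that three structurally different operators must reduce to the \emph{same} symmetric scalar $S$, which is exactly what a diagonal mass matrix buys and what would fail otherwise.

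For the linear invariant I would first apply Lemma~\ref{lem:1-in-left-kernel-of-ImD2inv} to replace $\vec{1}^T\M(\I-\D2{a})^{-1}$ by $\vec{1}^T\M$, reducing the rate of change to $-\vec{1}^T\M$ applied to the same bracket. The divergence terms $\D1\vec{u}^2$, $\alpha\D1(\vec{u}\D2{b}\vec{u})$, and $(1-\alpha)\D2{b}(\vec{u}\D1\vec{u})$ are annihilated by Lemma~\ref{lem:1-M-Di} (since $\vec{1}^T\M\D1=\vec{1}^T\M\D2{b}=\vec{0}^T$), while $\vec{1}^T\M(\vec{u}\D1\vec{u})=\vec{u}^T\M\D1\vec{u}=0$ by skew-symmetry of $\M\D1$. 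The only survivor is $-(2\alpha-1)(\D1\vec{u})^T\M\D2{b}\vec{u}$, which vanishes immediately if $\alpha=\nicefrac{1}{2}$. When instead $\D1$ and $\D2{b}$ commute, I would show $\M\D1\D2{b}$ is skew-symmetric by combining $\D1^T\M=-\M\D1$, $\D2{b}^T\M=\M\D2{b}$, and commutativity exactly as in Lemma~\ref{lem:D1-D2-commuting}, so that $(\D1\vec{u})^T\M\D2{b}\vec{u}=-\vec{u}^T\M\D1\D2{b}\vec{u}=0$. This establishes linear conservation under either hypothesis stated in the theorem.
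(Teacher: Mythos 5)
Your proposal is correct and follows essentially the same route as the paper: symmetry of $\M(\I - \D2{a})$ plus diagonality of $\M$ and the periodic SBP relations to cancel the Burgers-type and cubic terms for the quadratic invariant, and Lemmas~\ref{lem:1-in-left-kernel-of-ImD2inv} and \ref{lem:1-M-Di} to reduce the linear-invariant rate to the single term $(2\alpha-1)\,\vec{u}^T \D1^T \M \D2{b} \vec{u}$, handled by $\alpha = \nicefrac{1}{2}$ or by skew-symmetry of $\M \D1 \D2{b}$ under commutativity. Your bookkeeping of the three cubic terms via the single scalar $S$ with coefficients summing to zero is just a mild repackaging of the paper's grouping into two vanishing pairs (and the sign on the surviving linear-invariant term is off, but immaterial since it vanishes either way).
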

\begin{proof}
  The rate of change of the linear invariant
  \eqref{eq:ch-invariants-linear} is
  \begin{equation}
  \begin{aligned}
    &\quad
    \od{}{t} J^{\text{CH}}_1(\vec{u})
    =
    \od{}{t} \vec{1}^T \M \vec{u}
    =
    \vec{1}^T \M \partial_t \vec{u}
    =
    - \vec{1}^T \M (\I - \D2{a})^{-1} \Bigl(
      (\I - \D2{a}) \partial_t \vec{u}
    \Bigr)
    \\
    &=
    - \vec{1}^T \M \left(
      \D1 \vec{u}^2
      + \vec{u} \D1 \vec{u}
      - \alpha \D1 (\vec{u} \D2{b} \vec{u})
      - (1 - \alpha) \D2{b} (\vec{u} \D1 \vec{u})
      - (2 \alpha - 1) (\D1 \vec{u}) \D2{b} \vec{u}
    \right)
    \\
    &=
    (2 \alpha - 1) \vec{u}^T \D1^T \M \D2{b} \vec{u},
  \end{aligned}
  \end{equation}
  where Lemma~\ref{lem:1-in-left-kernel-of-ImD2inv} has been used in the
  second line and Lemma~\ref{lem:1-M-Di} has been used in the last step.
  If $\D1$ and $\D2{b}$ commute,
  \begin{equation}
    \od{}{t} J^{\text{CH}}_1(\vec{u})
    =
    (2 \alpha - 1) \frac{1}{2} \vec{u}^T \D1^T \D2{b}^T \M \vec{u}
    - (2 \alpha - 1) \frac{1}{2} \vec{u}^T \M \D1 \D2{b} \vec{u}
    =
    0.
  \end{equation}

  Since $\I - \D2{a}$ is a symmetric operator, the semidiscrete rate of
  change of the quadratic invariant \eqref{eq:ch-invariants-quadratic}
  is
  \begin{align*}
  \stepcounter{equation}\tag{\theequation}
    &\quad
    \od{}{t} J^{\text{CH}}_2(\vec{u})
    =
    \frac{1}{2} \od{}{t} \vec{u}^T \M (\I - \D2{a}) \vec{u}
    =
    \vec{u}^T \M (\I - \D2{a}) \partial_t \vec{u}
    \\
    &=
    - \vec{u}^T \M \left(
      \D1 \vec{u}^2
      + \vec{u} \D1 \vec{u}
      - \alpha \D1 (\vec{u} \D2{b} \vec{u})
      - (1 - \alpha) \D2{b} (\vec{u} \D1 \vec{u})
      - (2 \alpha - 1) (\D1 \vec{u}) \D2{b} \vec{u}
    \right)
    \\
    &=
    \alpha \vec{u}^T \M \left(
      \D1 (\vec{u} \D2{b} \vec{u})
      + (\D1 \vec{u}) \D2{b} \vec{u}
    \right)
    + (1 - \alpha) \vec{u}^T \M \left(
      \D2{b} (\vec{u} \D1 \vec{u})
      - (\D1 \vec{u}) \D2{b} \vec{u}
    \right)
    =
    0.
    \qedhere
  \end{align*}
\end{proof}

\begin{remark}
  A dissipative LDG method based on exact integration instead of
  split forms and equivalents of $\D2{a} = \D2{b} = \D1{-} \D1{+}$
  or $\D2{a} = \D2{b} = \D1{+} \D1{-}$ has been proposed in
  \cite{xu2008local}.
  The split form semidiscretization \eqref{eq:ch-inv-periodic-SBP}
  with $\alpha = 1$ and $\D2{a} = \D2{b} = \D1^2$ has been used
  in \cite{liu2016invariantFD} for second order FD methods and
  in \cite{liu2016invariantDG} for DG methods.
  The same split form with $\D2{a} = \D2{b}$ has been used
  in \cite{hong2019linear} for Fourier collocation methods.
  Without applying a splitting, a Fourier collocation method
  conserving the cubic invariant \eqref{eq:ch-invariants-cubic}
  has been used in \cite{cai2016geometric}.
\end{remark}

\begin{remark}
  Similarly to the FW equation and in contrast to the BBM equation, an additional
  restriction on the first- and second-derivative operators arises for the CH equation: They need to commute to conserve the quadratic invariant
  \eqref{eq:ch-invariants-quadratic} unless the splitting parameter is chosen
  as $\alpha = \nicefrac{1}{2}$. The existence of such a splitting and the
  potential possibility to use different second-derivative operators provides
  interesting possibilities. In preliminary numerical studies, using
  $\D2{a} = \D2{b}$ seems to be a good choice.
\end{remark}

To verify the conservation properties of the semidiscretization \eqref{eq:ch-inv-periodic-SBP},
we used a smooth traveling wave solution with speed $c = 1.2$ computed numerically
using the Petviashvili method in the periodic domain $[-40, 40]$.
This traveling wave solution has been computed for the PDE
\begin{equation}
  (\I - \partial_x^2) \partial_t u(t,x)
  + 2 \kappa \partial_x u(t,x)
  + \partial_x \biggl(
    \frac{3}{2} u(t,x)^2
    - \frac{1}{2} (\partial_x u(t,x))^2
    - u(t,x) \partial_x^2 u(t,x)
  \biggr)
  = 0,
\end{equation}
which can be transformed to a solution of the CH equation (with $\kappa = 0$)
by the transformation
\begin{equation}
  x \to x + \kappa t,
  \quad
  u \to u + \kappa.
\end{equation}

\subsubsection{Convergence study in space}
\label{sec:ch-periodic-manufactured-convergence}

For the following convergence study, the method of manufactured solutions
is applied to \eqref{eq:periodic-manufactured} with periodic boundary conditions.
The results are shown in Figure~\ref{fig:ch-periodic-manufactured-convergence}.

Similarly to the BBM equation, central finite difference methods with order
of accuracy $p$ yield an $\text{EOC} \approx p$. The results for wide-stencil
and narrow-stencil second-derivative operators are again similar and the
narrow-stencil operators result in smaller errors (less than an order of
magnitude).

For CG methods, the choice of the second-derivative operator does not influence
the $\text{EOC}$ significantly, similarly to the FW equation and in contrast
to the BBM equation. Both wide and narrow-stencil operators $\D2$ yield
$\text{EOC} \approx p+1$ for odd polynomial degrees $p$ and
$\text{EOC} \approx p$ for even $p$.
In contrast to the other examples discussed before, the error depends on the
parity of the number of elements for odd polynomial degrees and the wide
stencil second-derivative operator $\D2 = \D1^2$. In these cases, the error
is smaller if an odd number of elements is used. By just adding one element
to go from even $N$ to odd $N$, the error can be reduced up to an order of
magnitude. The method using $p = 3$ results in approximately the same error
as the one for $p = 4$ if $N$ is odd while its error is up to an order of
magnitude bigger for even $N$.
Such a behavior cannot be observed for the narrow-stencil operator or even
polynomial degrees.

Similar observations can be made for nodal discontinuous Galerkin methods.
There, both types of operators $\D2$ yield
$\text{EOC} \approx p+1$ for even polynomial degrees $p$ and
$\text{EOC} \approx p$ for odd $p$.
The only exception to this rule is the narrow-stencil second-derivative
operator $\D2 = \D1{+} \D1{-}$ with $p = 1$, which doesn't converge.
Since this phenomenon occurs only for this specific parameter combination
and convergence can be obtained, \eg for $\alpha = 1$, it is not studied
in detail here.
As for CG methods, there is a dependence of the error on the parity of
the number of elements. For DG methods, this dependence manifests only
for wide-stencil operators $\D2 = \D1^2$ and even polynomial degrees
(while it occurs for odd polynomial degrees for CG methods). Again, the error
is up to an order of magnitude smaller for odd $N$.

\begin{figure}[htbp]
\centering
  \begin{subfigure}[t]{0.49\textwidth}
  \centering
    \includegraphics[width=\textwidth]{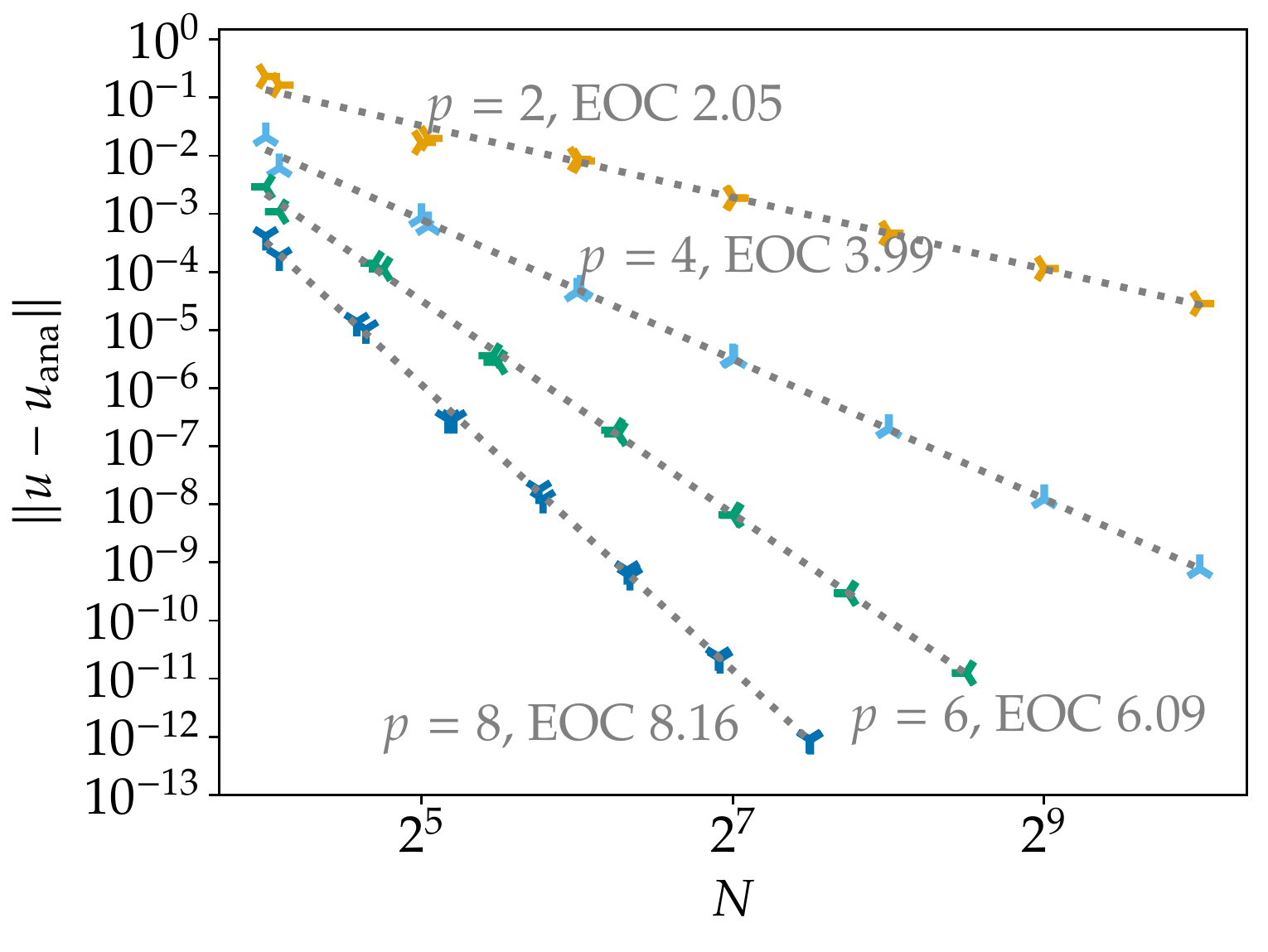}
    \caption{Finite difference methods, wide stencil $\D2 = \D1^2$.}
  \end{subfigure}%
  \hspace*{\fill}
  \begin{subfigure}[t]{0.49\textwidth}
  \centering
    \includegraphics[width=\textwidth]{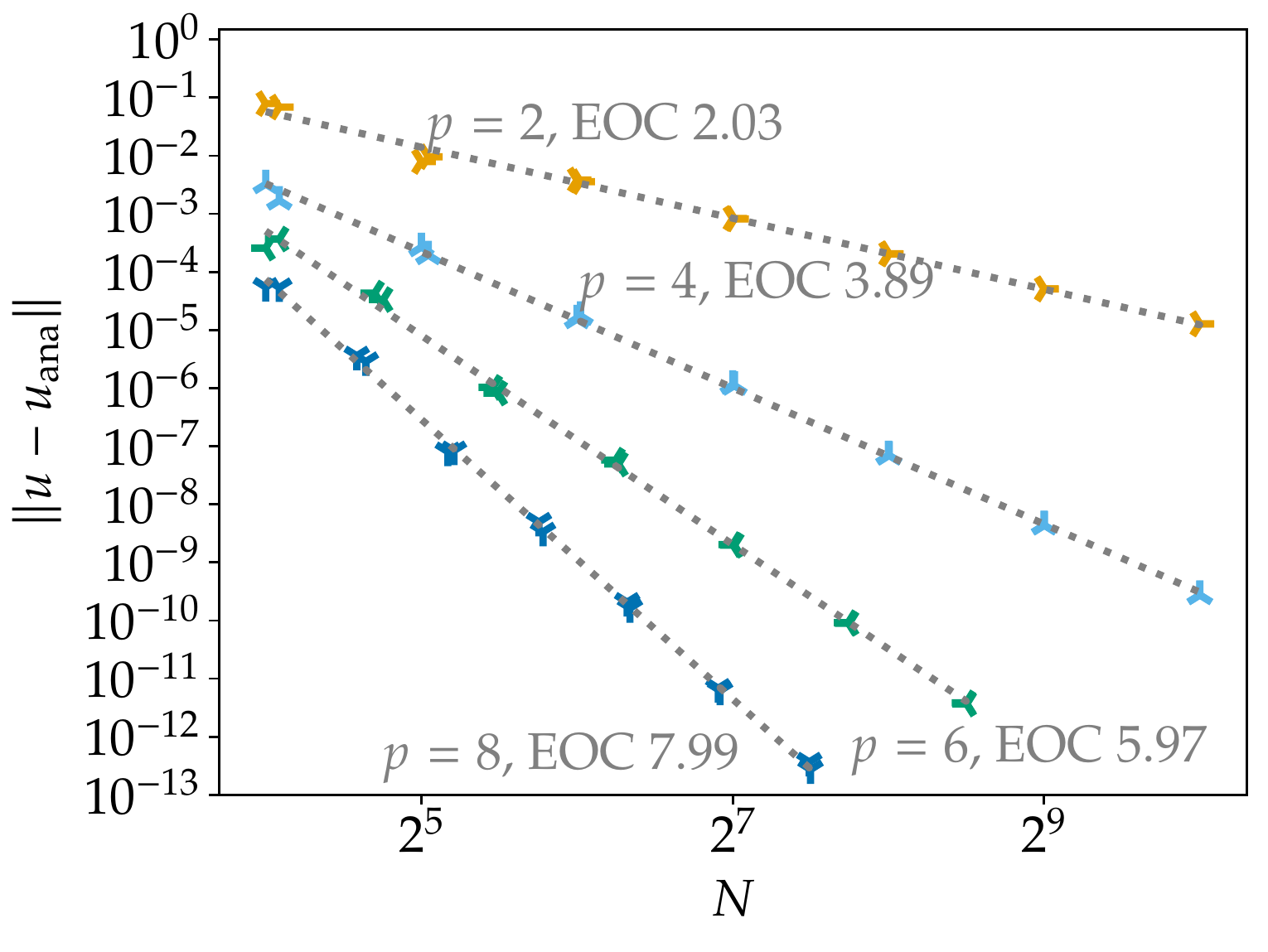}
    \caption{Finite difference methods, narrow stencil $\D2$.}
  \end{subfigure}%
  \\
  \begin{subfigure}[t]{0.8\textwidth}
  \centering
    \includegraphics[width=\textwidth]{figures/Galerkin_legend_p1_p6}
  \end{subfigure}%
  \\
  \begin{subfigure}[t]{0.49\textwidth}
  \centering
    \includegraphics[width=\textwidth]{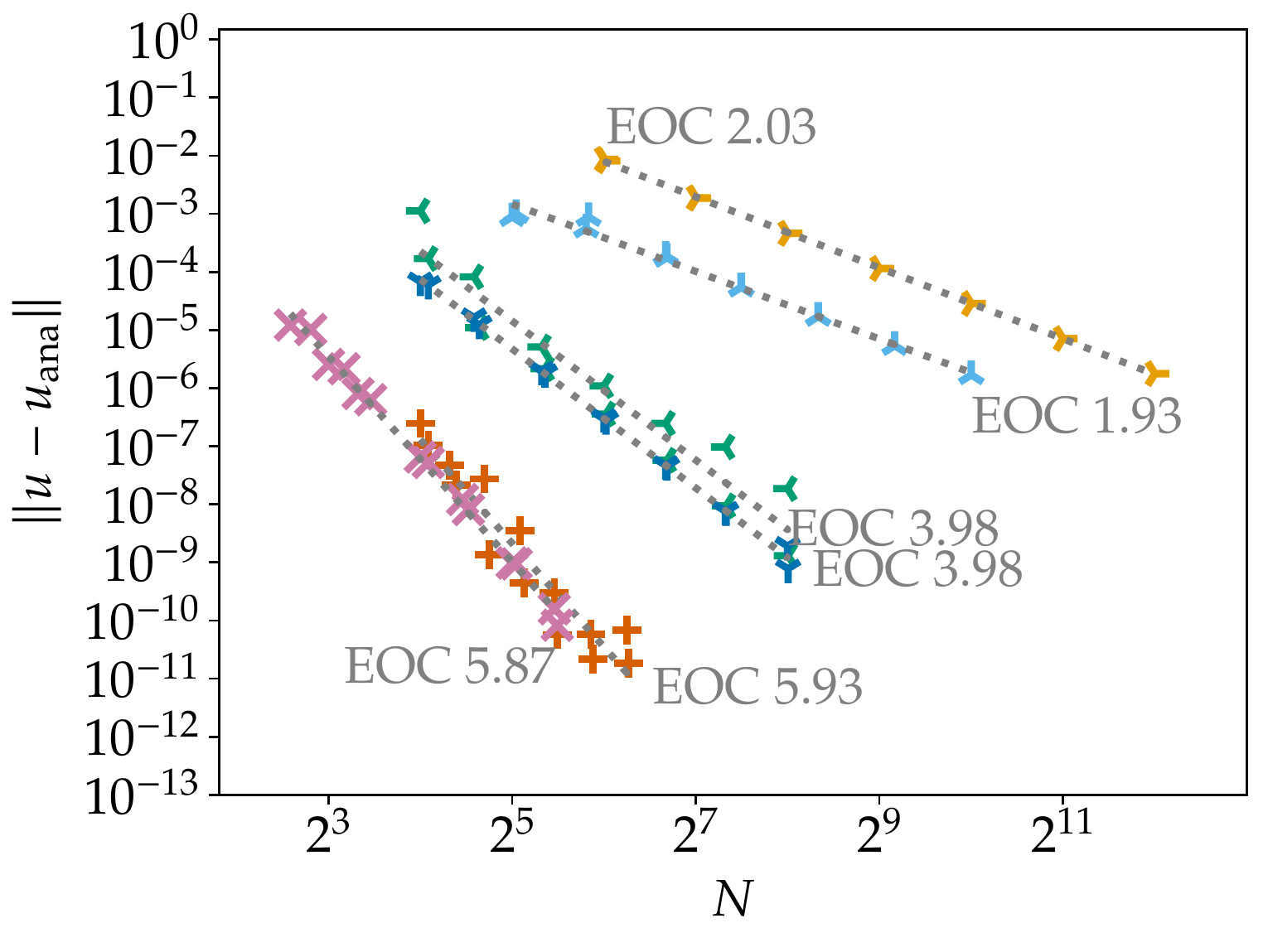}
    \caption{Continuous Galerkin methods, $\D2 = \D1^2$.}
  \end{subfigure}%
  \hspace*{\fill}
  \begin{subfigure}[t]{0.49\textwidth}
  \centering
    \includegraphics[width=\textwidth]{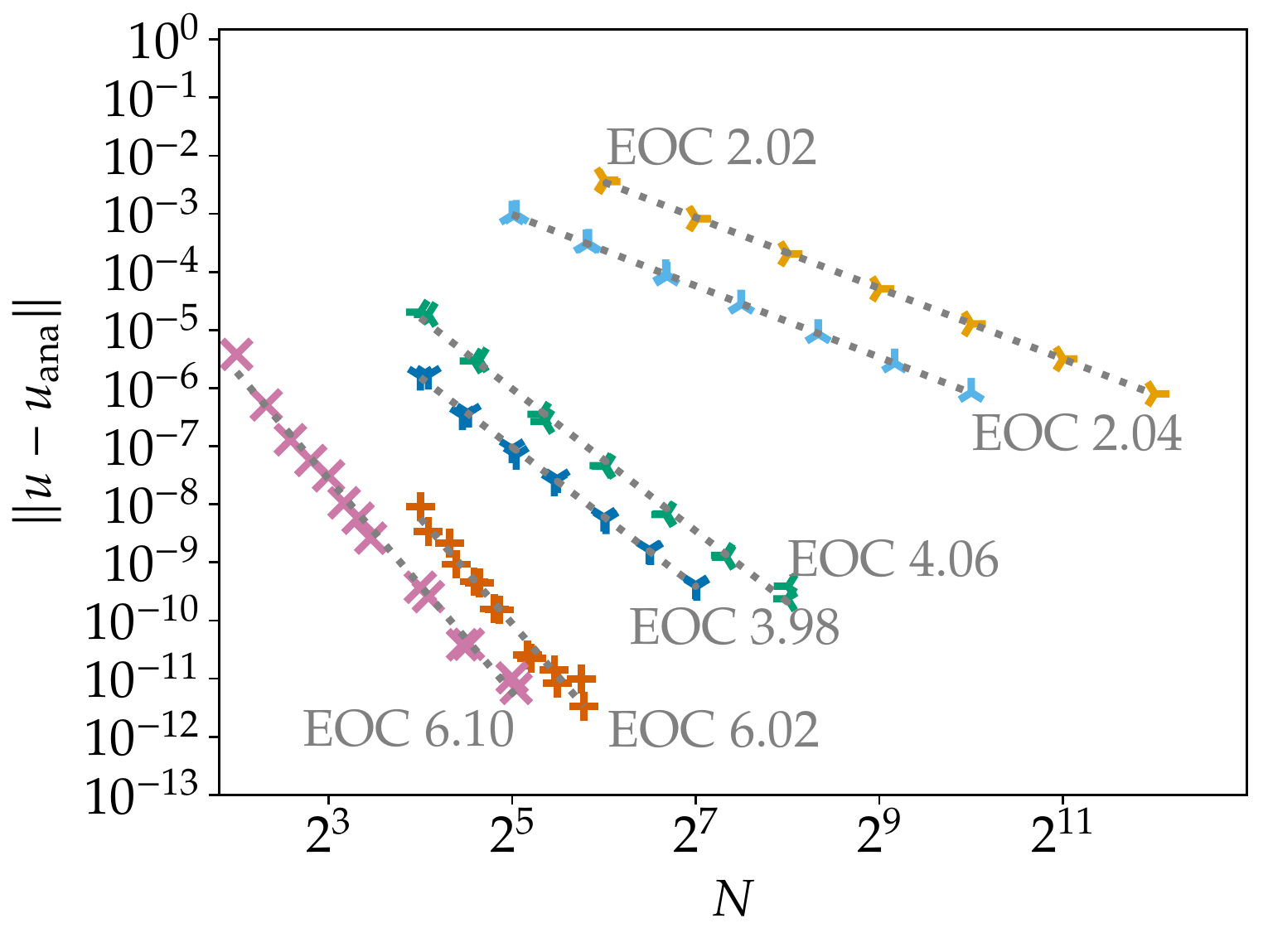}
    \caption{Continuous Galerkin methods, narrow stencil $\D2$.}
  \end{subfigure}%
  \\
  \begin{subfigure}[t]{0.49\textwidth}
  \centering
    \includegraphics[width=\textwidth]{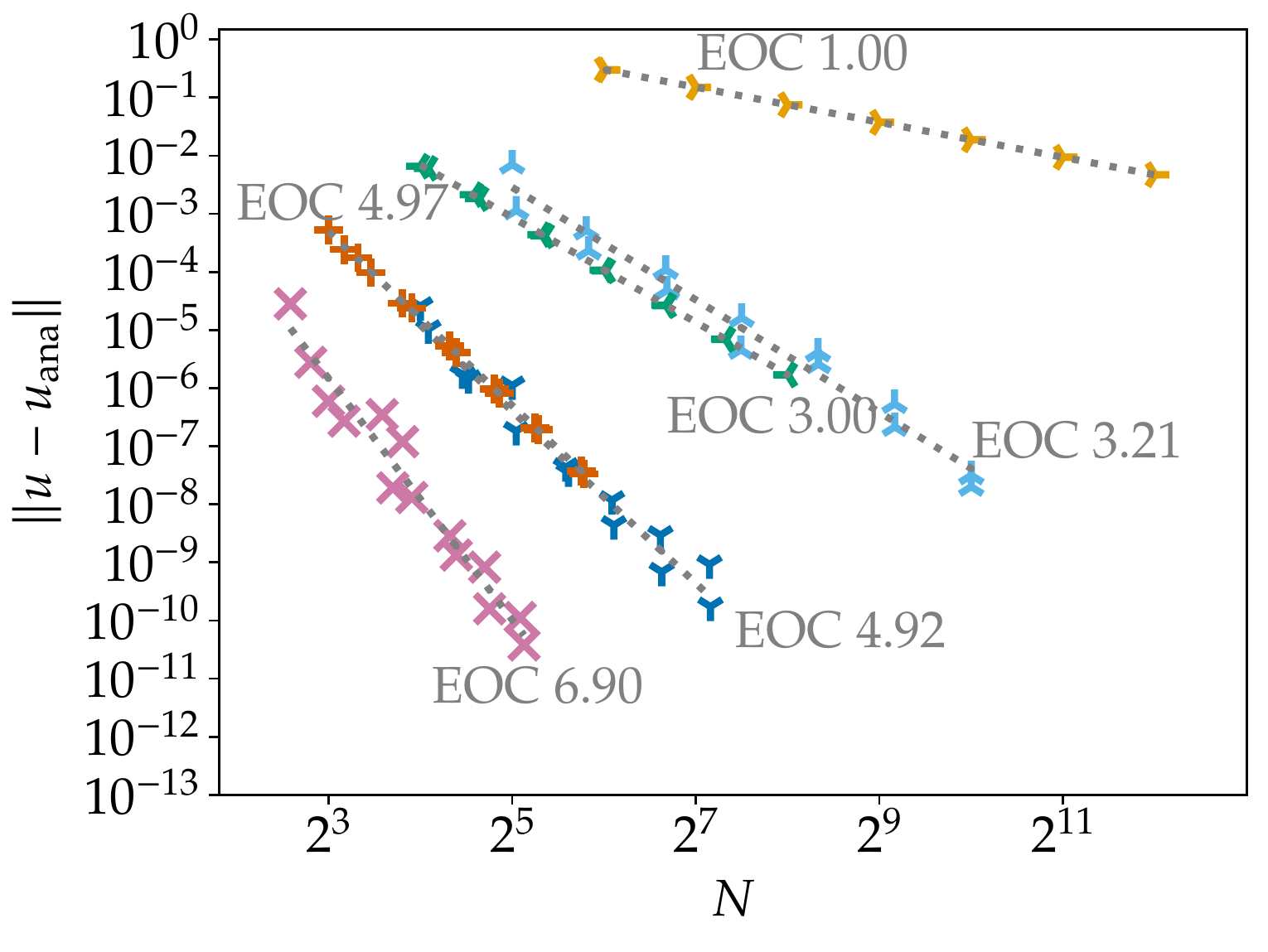}
    \caption{Discontinuous Galerkin methods, $\D2 = \D1^2$.}
  \end{subfigure}%
  \hspace*{\fill}
  \begin{subfigure}[t]{0.49\textwidth}
  \centering
    \includegraphics[width=\textwidth]{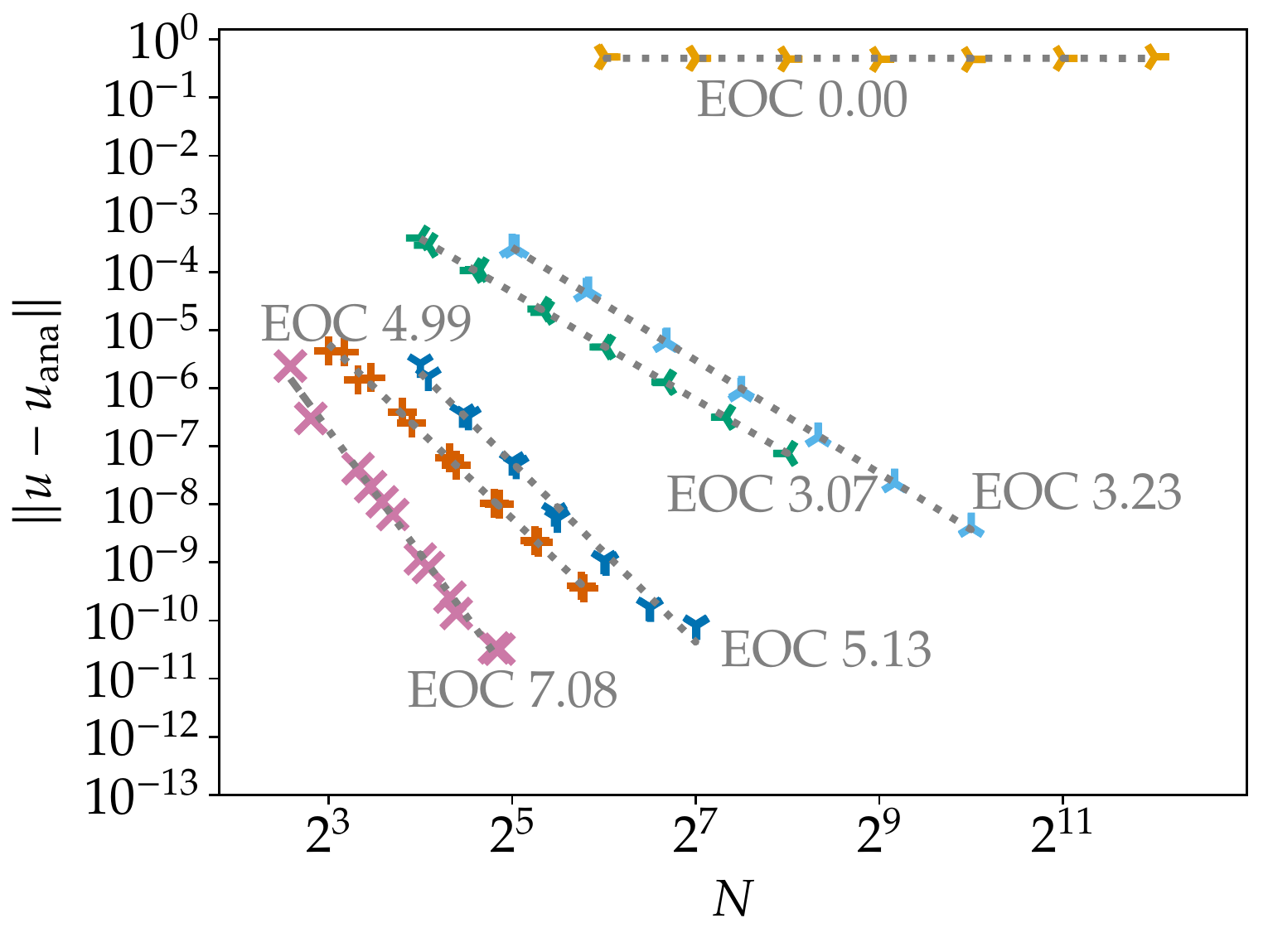}
    \caption{Discontinuous Galerkin methods, $\D2 = \D1{+} \D1{-}$.}
  \end{subfigure}%
  \caption{Convergence results of the spatial semidiscretizations
           \eqref{eq:ch-inv-periodic-SBP}
           with $\alpha = \nicefrac{1}{2}$ and $\D2{a} = \D2{b} = \D2$
           for the manufactured solution \eqref{eq:periodic-manufactured}
           of the CH equation.
           All of these semidiscretizations conserve the linear and quadratic
           invariants \eqref{eq:ch-invariants} of the CH equation
           \eqref{eq:ch-dir}.}
  \label{fig:ch-periodic-manufactured-convergence}
\end{figure}

\subsection{Degasperis-Procesi equation}
\label{sec:dp}

Consider the Degasperis-Procesi equation \cite{degasperis2002new}
\begin{equation}
\label{eq:dp-dir}
\begin{aligned}
  (\I - \partial_x^2) \partial_t u(t,x)
  + 4 \partial_x f(u(t,x))
  - \partial_x^3 f(u(t,x))
  &= 0,
  && t \in (0, T), x \in (\xmin, \xmax),
  \\
  u(0, x) &= u^0(x),
  && x \in [\xmin, \xmax],
  \\
  f(u) &= \frac{u^2}{2},
\end{aligned}
\end{equation}
with periodic boundary conditions, which can also be written as
\begin{equation}
\label{eq:dp-inv}
  \partial_t u(t,x)
  + (\I - \partial_{x,P}^2)^{-1} (4\I - \partial_x^2) \partial_x f(u(t,x))
  = 0,
\end{equation}
where $(\I - \partial_{x,P}^2)^{-1}$ is the inverse of the elliptic
operator $\I - \partial_x^2$ with periodic boundary conditions.
The functionals
\begin{subequations}
\label{eq:dp-invariants}
\begin{align}
  \label{eq:dp-invariants-linear}
  J^{\text{DP}}_1(u)
  &= \int_{\xmin}^{\xmax} ( u - \partial_x^2 u ),
  \\
  \label{eq:dp-invariants-quadratic}
  J^{\text{DP}}_2(u)
  &= \frac{1}{2} \int_{\xmin}^{\xmax} \bigl( (u - \partial_x^2 u) v \bigr) \bigr),
  \qquad v = (4\I - \partial_{x,P}^2)^{-1} u,
  \\
  \label{eq:dp-invariants-cubic}
  J^{\text{DP}}_3(u)
  &= \int_{\xmin}^{\xmax} u^3,
\end{align}
\end{subequations}
are invariants of solutions.
We would like to emphasize that \eqref{eq:ch-dir} and \eqref{eq:dp-dir} can
be written using the same linear and nonlinear terms multiplied by different
constant coefficients. These result in different invariants and other
split forms available/necessary for conservative methods.
In the following, we will construct numerical methods that conserve
the linear \eqref{eq:dp-invariants-linear} and quadratic
\eqref{eq:dp-invariants-quadratic} invariants but not necessarily the
cubic invariant \eqref{eq:dp-invariants-cubic}.

\subsubsection{Conservative numerical methods}

The rate of change of the quadratic invariant \eqref{eq:dp-invariants-quadratic}
results basically in the same integral terms as the energy rate of Burgers'
equation \eqref{eq:burgers-chain-rule}. Hence, the same kind of splitting
can be used to obtain semidiscretizations
\begin{equation}
\label{eq:dp-inv-periodic-SBP}
  \partial_t \vec{u}
  + \frac{1}{3} (\I - \D2)^{-1} (4\I - \D2) \left(
    \D1 \vec{u}^2 + \vec{u} \D1 \vec{u}
  \right)
  =
  \vec{0}
\end{equation}
that conserve the linear and quadratic invariant.
\begin{theorem}
\label{thm:dp-inv-periodic-SBP}
  If $\D1$ is a periodic first-derivative SBP operator
  with diagonal mass matrix $\M$
  and $\D2$ is a periodic second-derivative SBP operator,
  then the semidiscretization
  \eqref{eq:dp-inv-periodic-SBP}
  conserves the invariants
  \eqref{eq:dp-invariants-linear} and
  \eqref{eq:dp-invariants-quadratic} of
  \eqref{eq:dp-dir}.
\end{theorem}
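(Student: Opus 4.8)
The plan is to follow the same template as the proof of Theorem~\ref{thm:bbm-inv-periodic-SBP}, testing the semidiscretization \eqref{eq:dp-inv-periodic-SBP} against the discrete form of each invariant and exploiting the algebraic structure of periodic SBP operators: $\M \D1$ is skew-symmetric, $\M \D2$ is symmetric, and every rational function of $\D2$ is $\M$-self-adjoint and commutes with every other such function. The guiding remark is that because the elliptic weight $(4\I - \D2)^{-1}$ built into $J^{\text{DP}}_2$ exactly matches the factor $(4\I - \D2)$ in the discrete flux, all operators assembled from $\D2$ will cancel automatically, so no commutativity between $\D1$ and $\D2$ is required (in contrast to the Fornberg--Whitham and Camassa--Holm cases).

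For the linear invariant \eqref{eq:dp-invariants-linear}, the discrete functional is $J^{\text{DP}}_1(\vec{u}) = \vec{1}^T \M (\I - \D2) \vec{u}$, so I would first compute $\od{}{t} J^{\text{DP}}_1 = \vec{1}^T \M (\I - \D2) \partial_t \vec{u}$. Since $\vec{1}^T \M \D2 = \vec{0}^T$ by Lemma~\ref{lem:1-M-Di}, the left factor collapses to $\vec{1}^T \M$. Substituting \eqref{eq:dp-inv-periodic-SBP}, removing the elliptic inverse via Lemma~\ref{lem:1-in-left-kernel-of-ImD2inv} (so that $\vec{1}^T \M (\I - \D2)^{-1} = \vec{1}^T \M$), and then using $\vec{1}^T \M (4\I - \D2) = 4 \vec{1}^T \M$ (again Lemma~\ref{lem:1-M-Di}), reduces the rate to a multiple of $\vec{1}^T \M (\D1 \vec{u}^2 + \vec{u} \D1 \vec{u})$. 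The first term vanishes by Lemma~\ref{lem:1-M-Di}; the second equals $\vec{u}^T \M \D1 \vec{u}$ because $\M$ is diagonal, and this vanishes by skew-symmetry of $\M \D1$.

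For the quadratic invariant \eqref{eq:dp-invariants-quadratic} I would write its discrete form as $J^{\text{DP}}_2(\vec{u}) = \tfrac{1}{2} \big((\I - \D2)\vec{u}\big)^T \M (4\I - \D2)^{-1} \vec{u} = \tfrac{1}{2} \vec{u}^T \M (\I - \D2)(4\I - \D2)^{-1} \vec{u}$, using that $\I - \D2$ is $\M$-self-adjoint. Because $(\I - \D2)$ and $(4\I - \D2)^{-1}$ are commuting $\M$-self-adjoint operators, the matrix $\M (\I - \D2)(4\I - \D2)^{-1}$ is symmetric, so $\od{}{t} J^{\text{DP}}_2 = \vec{u}^T \M (\I - \D2)(4\I - \D2)^{-1} \partial_t \vec{u}$. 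The crucial step is that, after inserting \eqref{eq:dp-inv-periodic-SBP}, the operators $(\I - \D2)$, $(4\I - \D2)^{-1}$, $(\I - \D2)^{-1}$, and $(4\I - \D2)$ are all rational functions of $\D2$ and hence commute and cancel in pairs, leaving exactly $\od{}{t} J^{\text{DP}}_2 = -\tfrac{1}{3} \vec{u}^T \M (\D1 \vec{u}^2 + \vec{u} \D1 \vec{u})$. This is the Burgers split-form expression of \eqref{eq:burgers-splitting}, which vanishes: by diagonality of $\M$ one has $\vec{u}^T \M \vec{u} \D1 \vec{u} = (\vec{u}^2)^T \M \D1 \vec{u}$, and by skew-symmetry of $\M \D1$ this equals $-\vec{u}^T \M \D1 \vec{u}^2$, so the two contributions cancel.

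The computation is essentially routine once the discrete form of $J^{\text{DP}}_2$ is fixed; the only point requiring genuine care is identifying that this discrete functional couples $(\I - \D2)\vec{u}$ with $(4\I - \D2)^{-1}\vec{u}$ through $\M$, and recognizing that the resulting $\M$-weighted quadratic form is symmetric. Given that, the \enquote{obstacle} that forces a commutativity hypothesis in the related equations simply does not arise here, because the single operator $\D2$ generates all the linear factors and the nonlinearity is handled entirely by the skew-symmetry of $\M \D1$.
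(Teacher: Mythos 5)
Your proposal is correct and follows essentially the same route as the paper's proof: cancel the matching factors of $(\I - \D2)^{-1}$ and $(4\I - \D2)$ using the $\M$-self-adjointness and mutual commutativity of functions of $\D2$, reduce both rates of change to the Burgers-type expression $\vec{u}^T \M (\D1 \vec{u}^2 + \vec{u}\,\D1 \vec{u})$, and kill it via diagonality of $\M$ and skew-symmetry of $\M \D1$ (together with Lemmas~\ref{lem:1-M-Di} and \ref{lem:1-in-left-kernel-of-ImD2inv} for the linear invariant). The only cosmetic difference is that you write the discrete quadratic form as $\M (\I - \D2)(4\I - \D2)^{-1}$ while the paper uses the equivalent $(4\I - \D2)^{-T} \M (\I - \D2)$.
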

\begin{proof}
  The linear invariant \eqref{eq:dp-invariants-linear} is conserved
  since
  \begin{multline}
    \od{}{t} J^{\text{DP}}_1(\vec{u})
    =
    \od{}{t} \vec{1}^T \M (\I - \D2) \vec{u}
    =
    \vec{1}^T \M (\I - \D2) \partial_t \vec{u}
    =
    - \frac{1}{3} \vec{1}^T \M (4\I - \D2) \left(
      \D1 \vec{u}^2 + \vec{u} \D1 \vec{u}
    \right)
    \\
    =
    - \frac{1}{3} \vec{1}^T \M (4\I - \D2) \left(
      \D1 \vec{u}^2 + \vec{u} \D1 \vec{u}
    \right)
    = 0,
  \end{multline}
  where Lemma~\ref{lem:1-in-left-kernel-of-ImD2inv} and
  Lemma~\ref{lem:1-M-Di} have been used.

  Since $\I - \D2$ and $4\I - \D2$ are commuting symmetric operators,
  the semidiscrete rate of change of the quadratic invariant
  \eqref{eq:dp-invariants-quadratic} is
  \begin{equation}
  \begin{multlined}
    \frac{1}{2} \od{}{t} \vec{u}^T (4\I - \D2)^{-T} \M (\I - \D2) \vec{u}
    =
    \vec{u}^T (4\I - \D2)^{-T} \M (\I - \D2) \partial_t \vec{u}
    \\
    =
    - \frac{1}{3} \vec{u}^T (4\I - \D2)^{-T} \M (4\I - \D2) (\D1 \vec{u}^2 + \vec{u} \D1 \vec{u})
    =
    - \frac{1}{3} \vec{u}^T \M (\D1 \vec{u}^2 + \vec{u} \D1 \vec{u})
    =
    0.
  \end{multlined}
  \qedhere
  \end{equation}
\end{proof}

\begin{remark}
  The split form discretization \eqref{eq:dp-inv-periodic-SBP}
  is used in \cite{xia2014fourier} for Fourier collocation methods.
  Substituting the split form in \eqref{eq:dp-inv-periodic-SBP} by
  the conservative form results in a semidiscretization that conserves
  both the linear invariant \eqref{eq:dp-invariants-linear} and the
  cubic invariant \eqref{eq:dp-invariants-cubic}. This has been
  used for Fourier collocation methods in \cite{cai2016geometric}.
\end{remark}

\begin{remark}
  Conservation of the quadratic invariant
  \eqref{eq:dp-invariants-quadratic} yields the estimate
  $\| \vec{u}(t) \|_M^2 \le 4 \| \vec{u}^0 \|_M^2$, \cf
  \cite{xia2014fourier}.
  Indeed, setting $\vec{v} = (4\I - \D2)^{-1} \vec{u}$,
  \begin{equation}
    \| \vec{u} \|_{\M}^2
    =
    \| (4\I - \D2) \vec{v} \|_{\M}^2
    =
    16 \| \vec{v} \|_{\M}^2
    + 8 \| \vec{v} \|_{\A2}^2
    + \| \D2 \vec{v} \|_{\M}^2.
  \end{equation}
  Using
  \begin{equation}
    J^{\text{DP}}_2(\vec{u})
    =
    \vec{v}^T \M (\I - \D2) \vec{u}
    =
    \vec{v}^T \M (\I - \D2) (4\I - \D2) \vec{v}
    =
    4 \| \vec{v} \|_{\M}^2
    + 5 \| \vec{v} \|_{\A2}^2
    + \| \D2 \vec{v} \|_{\M}^2.
  \end{equation}
  yields the bounds
  $\| \vec{u} \|_{\M}^2 \le 4 J^{\text{DP}}_2(\vec{u})$
  and $\| \vec{u} \|_{\M}^2 \ge J^{\text{DP}}_2(\vec{u})$.
  Hence, $\| \vec{u}(t) \|_M^2
  \le 4 J^{\text{DP}}_2(\vec{u}(t))
  = 4 J^{\text{DP}}_2(\vec{u}^0)
  \le 4 \| \vec{u}^0 \|_M^2$.
\end{remark}

A smooth traveling wave solution with speed $c = 1.2$ computed numerically
using the Petviashvili method in the periodic domain $[-40, 40]$ was used to verify
the conservation properties of the semidiscretization
\eqref{eq:ch-inv-periodic-SBP}.
This traveling wave solution has been computed for the PDE
\begin{equation}
  (\I - \partial_x^2) \partial_t u(t,x)
  + 3 \kappa \partial_x u(t,x)
  + 2 \partial_x u(t,x)^2
  - \frac{1}{2} \partial_x^3 u(t,x)^2
  = 0
\end{equation}
which can be transformed to a solution of the DP equation (with $\kappa = 0$)
by the transformation
\begin{equation}
  x \to x + \kappa t,
  \quad
  u \to u + \kappa.
\end{equation}

\subsubsection{Convergence study in space}
\label{sec:dp-periodic-manufactured-convergence}

For the following convergence study, the method of manufactured solutions
is applied to \eqref{eq:periodic-manufactured} with periodic boundary conditions.
The results are shown in Figure~\ref{fig:dp-periodic-manufactured-convergence}.

Similarly to the FW equation, central finite difference methods with order of
accuracy $p$ yield an $\text{EOC}$ between $p - \nicefrac{1}{2}$ and $p$.
For other test problems such as traveling wave profiles, the $\text{EOC}$ is
closer to $p$.
The results for wide-stencil and narrow-stencil second-derivative operators
are similar but the narrow-stencil operators result again in smaller errors
(up to an order of magnitude).

As for the FW and CH equations and in contrast to the BBM equation, the choice
of the second-derivative operator does not influence the $\text{EOC}$
significantly for nodal continuous Galerkin methods.
Both wide and narrow-stencil operators $\D2$ yield an
$\text{EOC}$ between $p + \nicefrac{1}{2}$ and $p + 1$ for odd polynomial degrees $p$ and
$\text{EOC} \approx p$ for even $p$. For traveling wave solutions and odd
polynomial degrees, the $\text{EOC}$ is closer to $p + 1$.

Similar observations can be made for nodal discontinuous Galerkin methods.
There, both types of operators $\D2$ yield
$\text{EOC} \approx p+1$ for even polynomial degrees $p$ and
$\text{EOC} \approx p$ for odd $p$.

\begin{figure}[htbp]
\centering
  \begin{subfigure}[t]{0.49\textwidth}
  \centering
    \includegraphics[width=\textwidth]{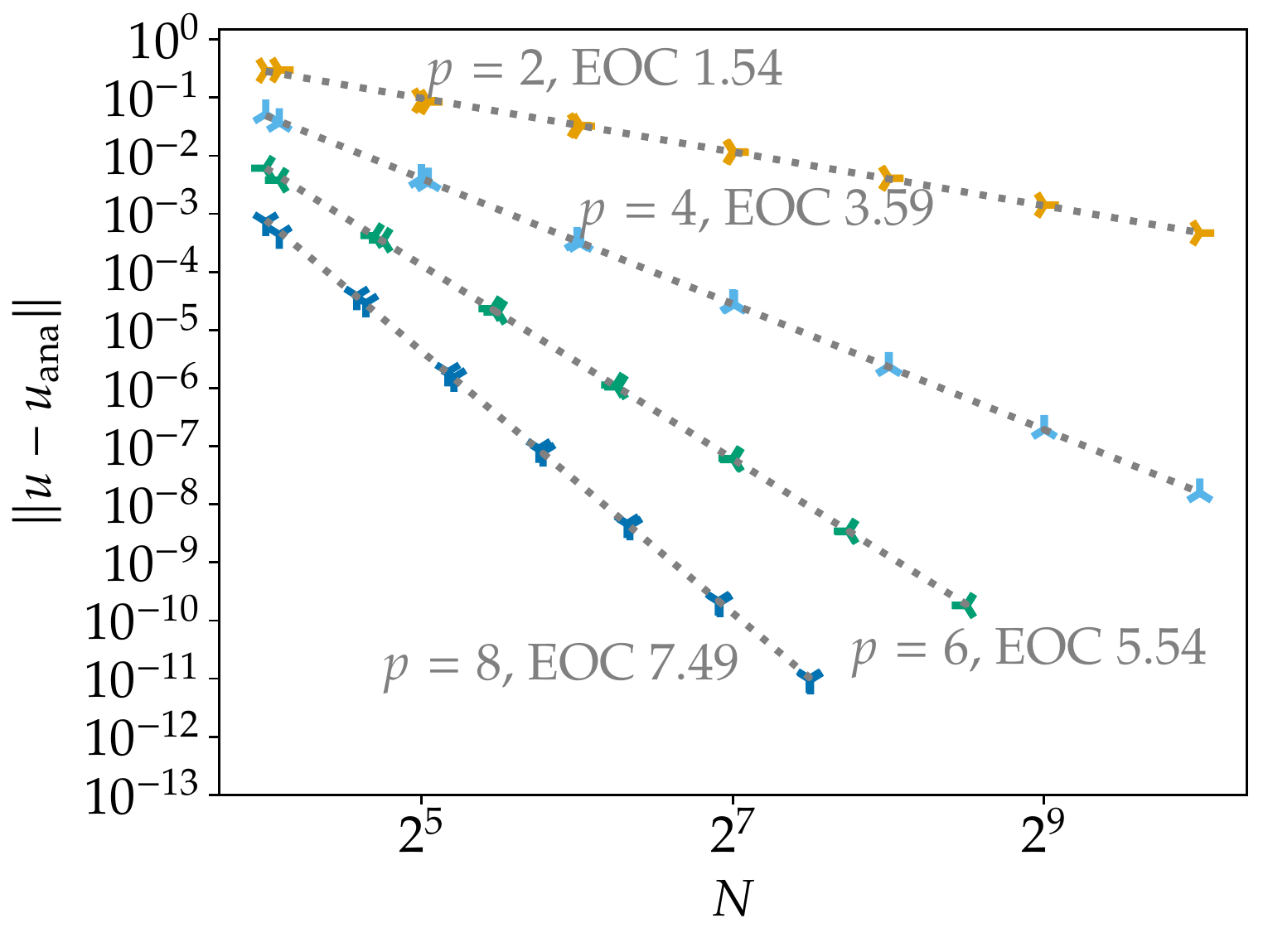}
    \caption{Finite difference methods, wide stencil $\D2 = \D1^2$.}
  \end{subfigure}%
  \hspace*{\fill}
  \begin{subfigure}[t]{0.49\textwidth}
  \centering
    \includegraphics[width=\textwidth]{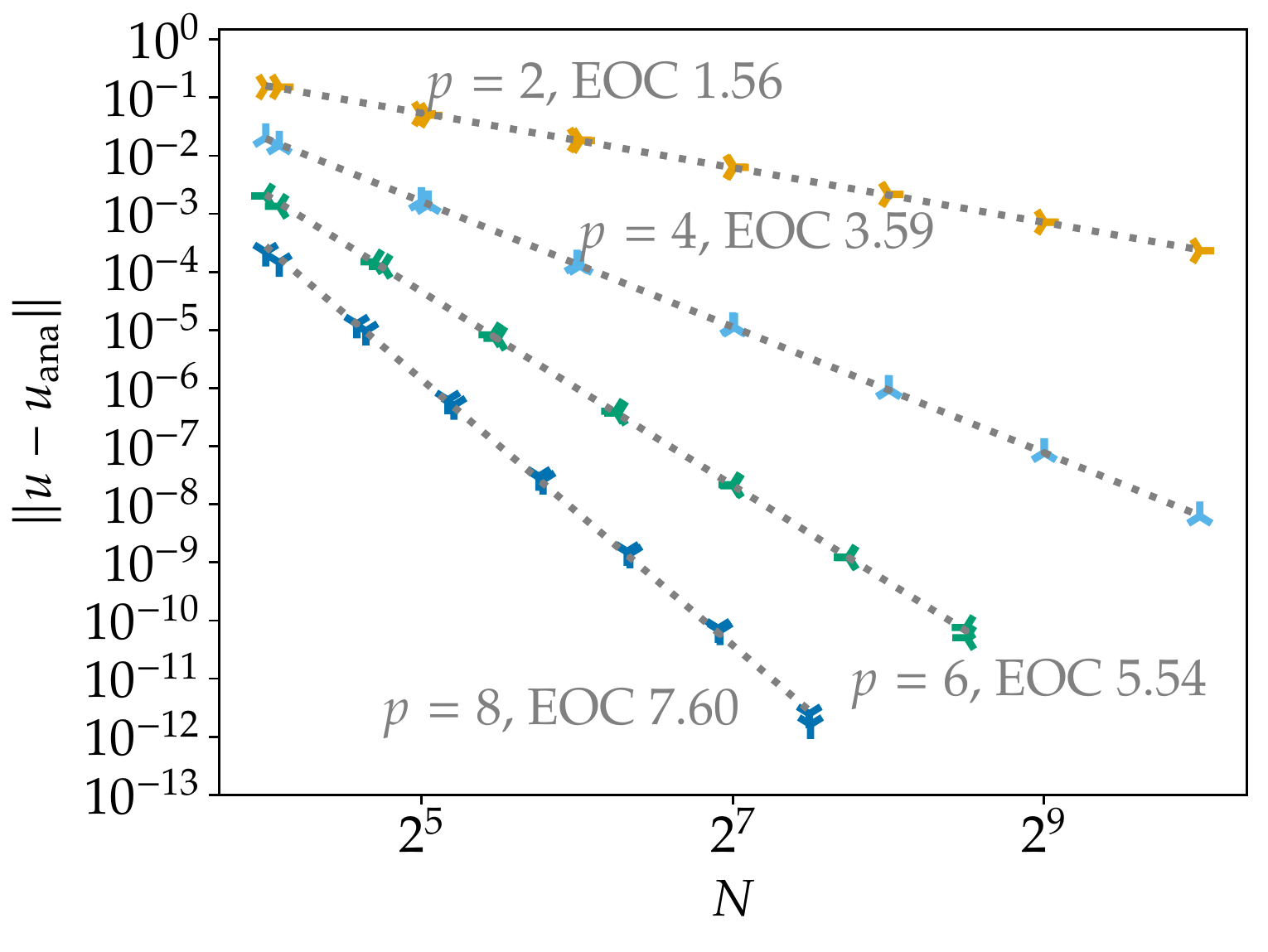}
    \caption{Finite difference methods, narrow stencil $\D2$.}
  \end{subfigure}%
  \\
  \begin{subfigure}[t]{0.8\textwidth}
  \centering
    \includegraphics[width=\textwidth]{figures/Galerkin_legend_p1_p6}
  \end{subfigure}%
  \\
  \begin{subfigure}[t]{0.49\textwidth}
  \centering
    \includegraphics[width=\textwidth]{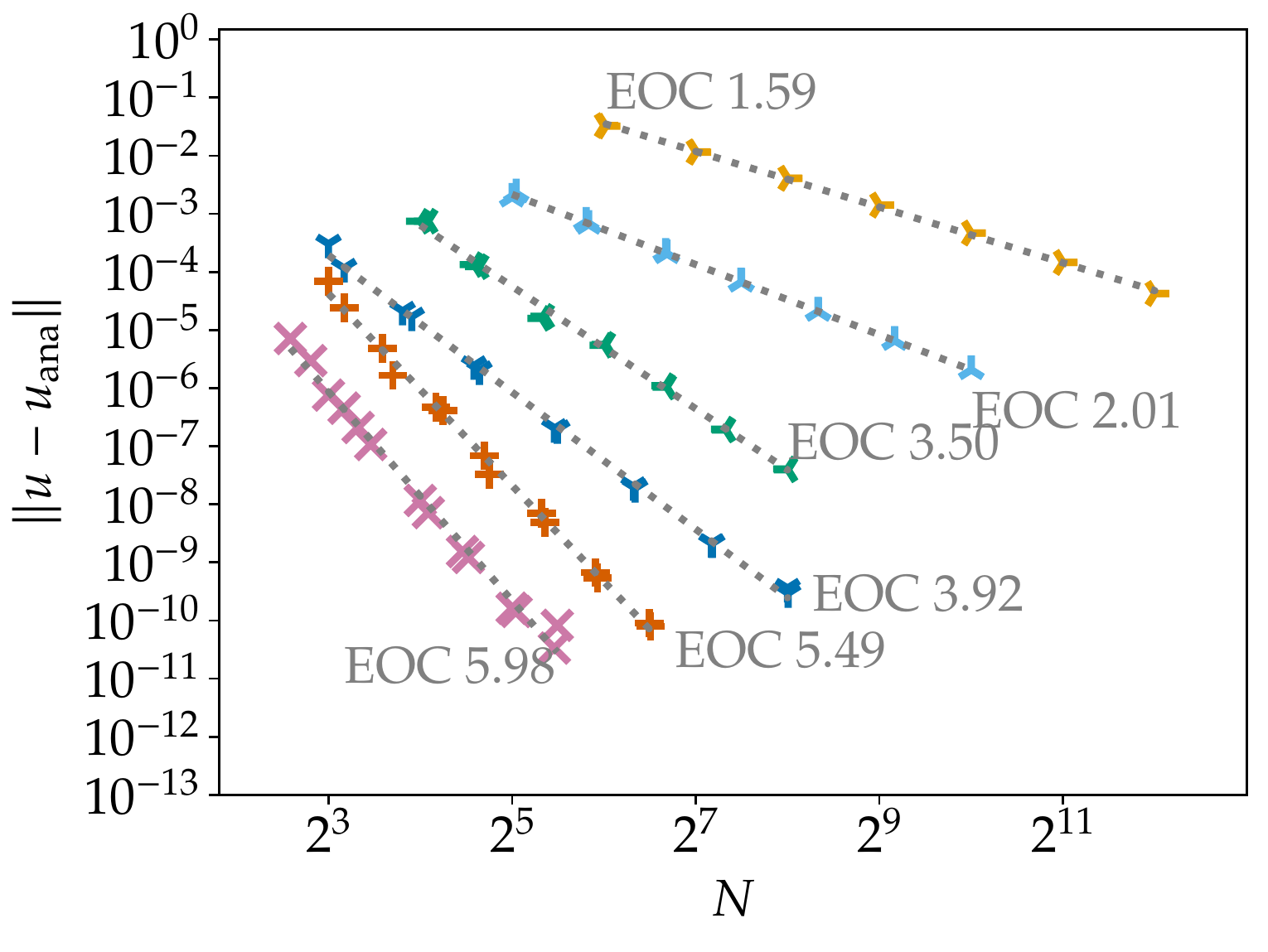}
    \caption{Continuous Galerkin methods, $\D2 = \D1^2$.}
  \end{subfigure}%
  \hspace*{\fill}
  \begin{subfigure}[t]{0.49\textwidth}
  \centering
    \includegraphics[width=\textwidth]{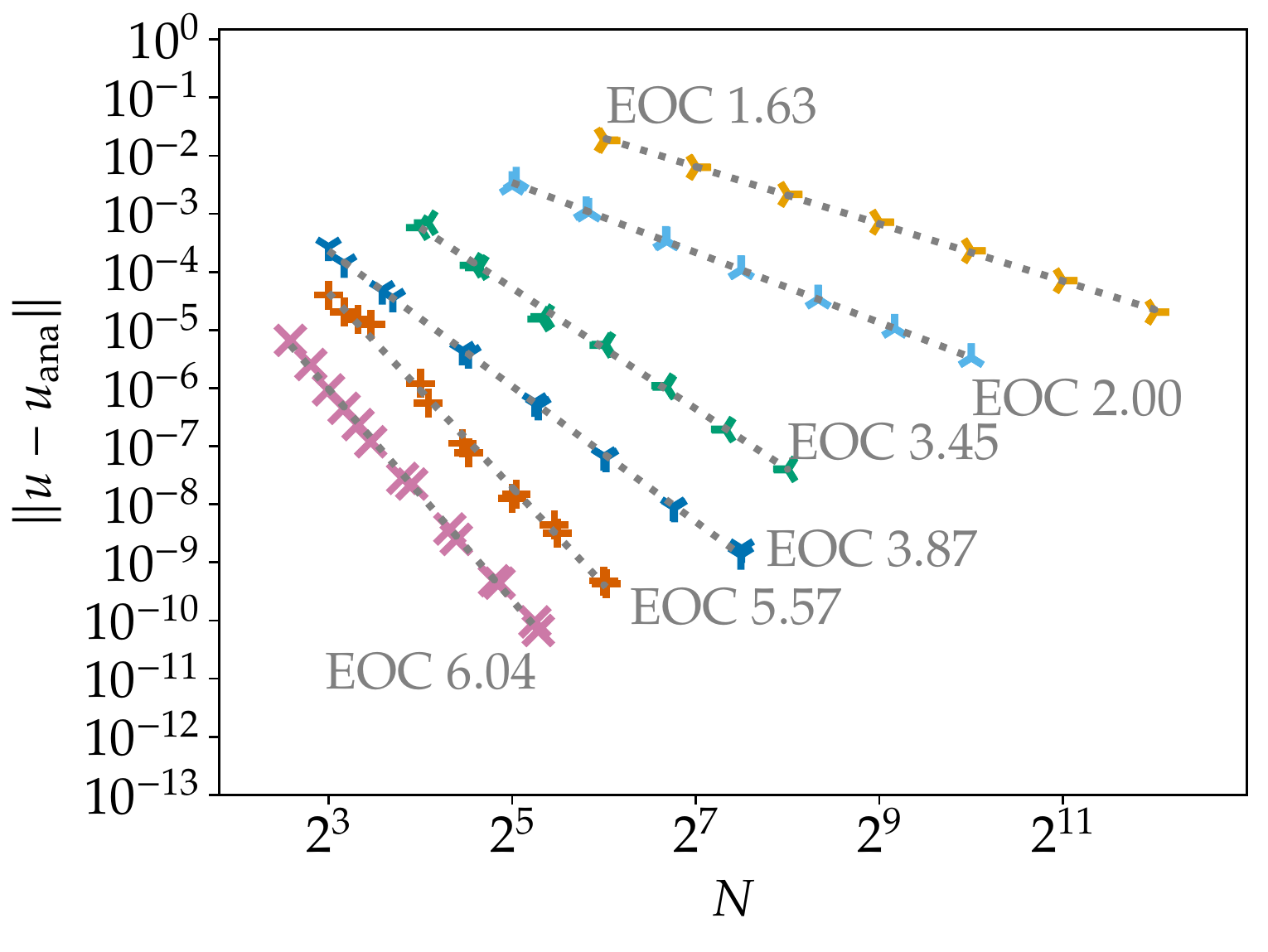}
    \caption{Continuous Galerkin methods, narrow stencil $\D2$.}
  \end{subfigure}%
  \\
  \begin{subfigure}[t]{0.49\textwidth}
  \centering
    \includegraphics[width=\textwidth]{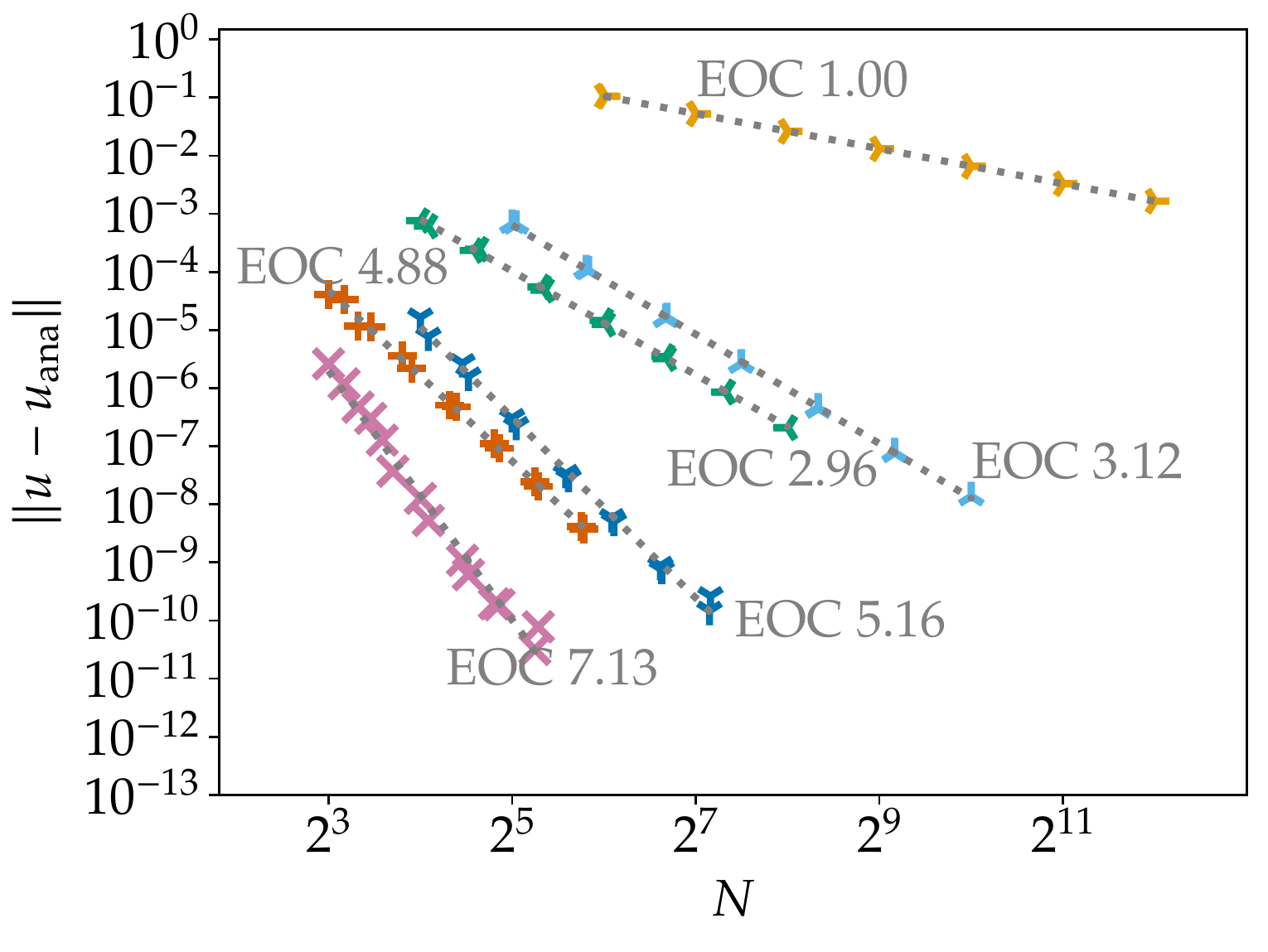}
    \caption{Discontinuous Galerkin methods, $\D2 = \D1^2$.}
  \end{subfigure}%
  \hspace*{\fill}
  \begin{subfigure}[t]{0.49\textwidth}
  \centering
    \includegraphics[width=\textwidth]{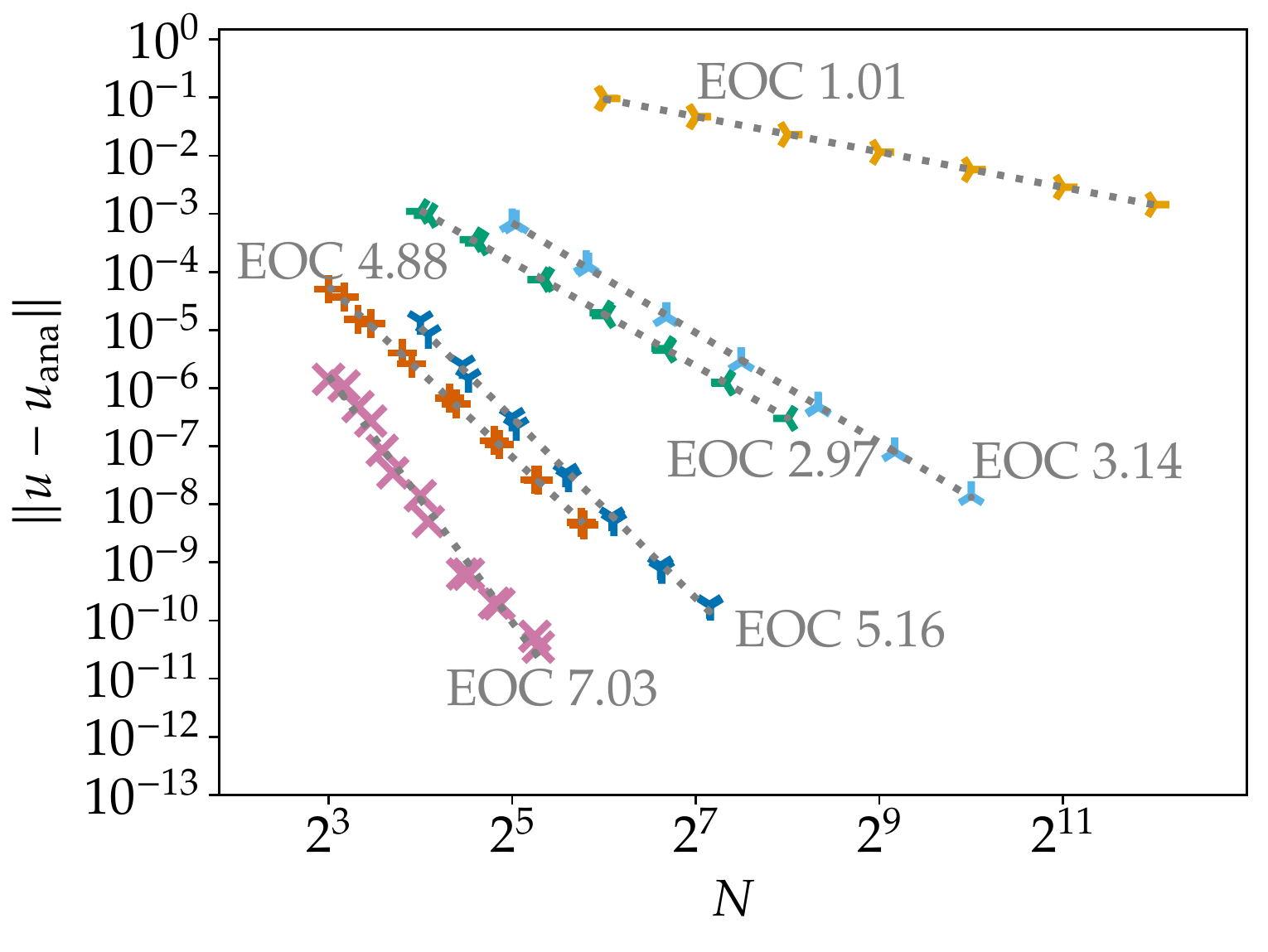}
    \caption{Discontinuous Galerkin methods, $\D2 = \D1{+} \D1{-}$.}
  \end{subfigure}%
  \caption{Convergence results of the spatial semidiscretizations
           \eqref{eq:dp-inv-periodic-SBP}
           for the manufactured solution \eqref{eq:periodic-manufactured}
           of the DP equation.
           All of these semidiscretizations conserve the linear and quadratic
           invariants \eqref{eq:dp-invariants} of the DP equation
           \eqref{eq:dp-dir}.}
  \label{fig:dp-periodic-manufactured-convergence}
\end{figure}

\subsection{Holm-Hone equation}
\label{sec:hh}

Consider the Holm-Hone equation \cite{holm2003nonintegrability}
\begin{equation}
\label{eq:hh-dir}
\begin{aligned}
  (4 - 5 \partial_x^2 + \partial_x^4) \partial_t u
  + u \partial_x^5 u
  + 2 (\partial_x u) \partial_x^4 u
  - 5 u \partial_x^3 u
  - 10 (\partial_x u) \partial_x^2 u
  + 12 u \partial_x u
  &= 0,
  \\& t \in (0, T), x \in (\xmin, \xmax),
  \\
  u(0, x) &= u^0(x),
  \\& x \in [\xmin, \xmax],
\end{aligned}
\end{equation}
with periodic boundary conditions, which can also be written as
\begin{equation}
\label{eq:hh-inv}
  \partial_t u
  + (4\I - 5 \partial_{x,P}^2 + \partial_{x,P}^4)^{-1} \left(
      \partial_x \bigl( u (4\I - 5 \partial_x^2 + \partial_x^4) u \bigr)
    + (\partial_x u) (4\I - 5 \partial_x^2 + \partial_x^4) u
  \right)
  = 0,
\end{equation}
where $(4\I - 5 \partial_{x,P}^2 + \partial_{x,P}^4)^{-1}$ is the
inverse of the elliptic operator $4\I - 5 \partial_x^2 + \partial_x^4$
with periodic boundary conditions.
The functionals
\begin{subequations}
\label{eq:hh-invariants}
\begin{align}
  \label{eq:hh-invariants-mass}
  J^{\text{HH}}_1(u)
  &= \int_{\xmin}^{\xmax} u,
  \\
  \label{eq:hh-invariants-linear}
  J^{\text{HH}}_2(u)
  &= \int_{\xmin}^{\xmax} (4\I - \partial_x^2) (\I - \partial_x^2) u
  = \int_{\xmin}^{\xmax} (4\I - 5 \partial_x^2 + \partial_x^4) u,
  \\
  \label{eq:hh-invariants-quadratic}
  J^{\text{HH}}_3(u)
  &= \frac{1}{2} \int_{\xmin}^{\xmax} u (4\I - \partial_x^2) (\I - \partial_x^2) u
  = \frac{1}{2} \int_{\xmin}^{\xmax} \bigl( 4 u^2 + 5 (\partial_x u)^2 + (\partial_x^2 u)^2 \bigr),
\end{align}
\end{subequations}
are invariants of solutions.
In the following, we will construct numerical methods that conserve
all invariants \eqref{eq:hh-invariants}.

\subsubsection{Conservative numerical methods}

Using the splitting in \eqref{eq:hh-inv}, semidiscretizations
\begin{equation}
\label{eq:hh-inv-periodic-SBP}
\begin{split}
  \partial_t \vec{u}
  =
  - (4\I - 5 \D2{a} + \D4{a})^{-1} \Bigl(
      \D1 \bigl( \vec{u} (4\I - 5 \D2{b} + \D4{b}) \vec{u} \bigr)
    + (\D1 \vec{u}) (4\I - 5 \D2{b} + \D4{b}) \vec{u}
  \Bigr)
\end{split}
\end{equation}
that conserve the linear and quadratic invariant can be obtained.
\begin{theorem}
\label{thm:hh-inv-periodic-SBP}
  If $\D1$ is a periodic first-derivative SBP operator
  with diagonal mass matrix $\M$,
  $\D2{a}$ \& $\D2{b}$ are periodic second-derivative SBP operators,
  and $\D4{a}$ \& $\D4{b}$ are periodic fourth-derivative SBP operators,
  then the semidiscretization \eqref{eq:hh-inv-periodic-SBP}
  conserves the quadratic invariant \eqref{eq:hh-invariants-quadratic}.
  If $\D1$ commutes with $\D2{b}$ \& $\D4{b}$, the linear invariants
  \eqref{eq:hh-invariants-mass} and \eqref{eq:hh-invariants-linear}
  are also conserved.
\end{theorem}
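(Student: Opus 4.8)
The plan is to follow the template of Theorems~\ref{thm:bbm-inv-periodic-SBP} and~\ref{thm:ch-inv-periodic-SBP}. Abbreviate the two elliptic operators as $L_a = 4\I - 5\D2{a} + \D4{a}$ and $L_b = 4\I - 5\D2{b} + \D4{b}$, so that the scheme~\eqref{eq:hh-inv-periodic-SBP} reads $\partial_t\vec{u} = -L_a^{-1}\bigl(\D1(\vec{u}\,\vec{w}) + (\D1\vec{u})\,\vec{w}\bigr)$ with $\vec{w} = L_b\vec{u}$. The first thing I would record is that $\M L_a$ and $\M L_b$ are symmetric: the periodic relations $\M\D2{a} = -\A2{a}$ and $\M\D4{a} = \A4{a}$ (and likewise for the $b$-operators) give $\M L_a = 4\M + 5\A2{a} + \A4{a}$, a sum of symmetric matrices, which is moreover positive definite, so $L_a$ is invertible.

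For the quadratic invariant I would use the discrete form $J^{\text{HH}}_3(\vec{u}) = \tfrac12\vec{u}^T\M L_a\vec{u}$, chosen so the operator matches the one being inverted. Symmetry of $\M L_a$ and the scheme give $\od{}{t}J^{\text{HH}}_3 = \vec{u}^T\M L_a\,\partial_t\vec{u} = -\vec{u}^T\M\bigl(\D1(\vec{u}\,\vec{w}) + (\D1\vec{u})\,\vec{w}\bigr)$, where $L_a^{-1}$ has cancelled. This is exactly the Burgers-type split-form cancellation behind~\eqref{eq:burgers-splitting}: the skew-symmetry $\M\D1 = -\D1^T\M$ turns the first term into $-(\D1\vec{u})^T\M(\vec{u}\,\vec{w})$, and since $\M$ is diagonal the trilinear form $\sum_i \M_{ii}(\D1\vec{u})_i\vec{u}_i\vec{w}_i$ is symmetric in its three factors, so it equals $\vec{u}^T\M\bigl((\D1\vec{u})\,\vec{w}\bigr)$ and the two terms cancel. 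This step uses only diagonality of $\M$ and is insensitive to $\vec{w}$ being $L_b\vec{u}$, so no commutativity is needed.

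For the linear invariants I would first collapse $J^{\text{HH}}_2$ onto $J^{\text{HH}}_1$: Lemma~\ref{lem:1-M-Di} gives $\vec{1}^T\M\D2{b} = \vec{1}^T\M\D4{b} = \vec{0}^T$, so $\vec{1}^T\M L_b\vec{u} = 4\,\vec{1}^T\M\vec{u}$ and it suffices to conserve the total mass $J^{\text{HH}}_1 = \vec{1}^T\M\vec{u}$. The same identity yields $\vec{1}^T\M L_a = 4\,\vec{1}^T\M$, hence $\vec{1}^T\M L_a^{-1} = \tfrac14\vec{1}^T\M$, the analogue of Lemma~\ref{lem:1-in-left-kernel-of-ImD2inv}. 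Substituting the scheme gives $\od{}{t}J^{\text{HH}}_1 = -\tfrac14\vec{1}^T\M\bigl(\D1(\vec{u}\,\vec{w}) + (\D1\vec{u})\,\vec{w}\bigr)$; the divergence term drops by Lemma~\ref{lem:1-M-Di}, leaving $\tfrac14\vec{u}^T\M\D1 L_b\vec{u}$ after applying $\D1^T\M = -\M\D1$ and the diagonality of $\M$.

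The crux, and the only place the commutativity hypothesis is used, is that this residual quadratic form vanishes. Since $\M\D1$ is skew-symmetric and $\M L_b$ is symmetric, the transpose of $\M\D1 L_b$ equals $-\M L_b\D1$, so $\M\D1 L_b$ is skew-symmetric exactly when $\D1 L_b = L_b\D1$, that is, when $\D1$ commutes with $\D2{b}$ and $\D4{b}$; then the form is $0$ and both $J^{\text{HH}}_1$ and $J^{\text{HH}}_2$ are conserved. I expect the difficulty to be bookkeeping rather than conceptual: keeping $L_a$ and $L_b$ distinct, matching the quadratic invariant to $L_a$ so the inverse cancels cleanly, and tracking the factor $4$. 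Examples~\ref{ex:D2-D1-DG} and~\ref{ex:D2-D1-CG} warn that the required commutativity genuinely fails for narrow-stencil couplings, so to invoke this half of the theorem one must resort to wide-stencil (squared) or Fourier/circulant operators.
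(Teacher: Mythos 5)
Your proposal is correct and follows essentially the same route as the paper's proof: symmetry of $\M L_a$ cancels the inverse for the quadratic invariant, diagonality of $\M$ gives the Burgers-type cancellation, Lemma~\ref{lem:1-M-Di} collapses the two linear invariants onto each other and kills the divergence terms, and commutativity of $\D1$ with $\D2{b}$, $\D4{b}$ makes the residual quadratic form vanish. The only (cosmetic) difference is that you invert $L_a$ against $\vec{1}^T\M$ to get the factor $\tfrac14$, whereas the paper differentiates $\vec{1}^T\M L_a\vec{u}$ directly so that $L_a$ and $L_a^{-1}$ cancel without bookkeeping the constant.
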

\begin{proof}
  Because of Lemma~\ref{lem:1-M-Di}, \eqref{eq:hh-invariants-mass}
  is conserved if and only if \eqref{eq:hh-invariants-linear} is
  conserved. For \eqref{eq:hh-invariants-linear}, consider
  \begin{align*}
  \stepcounter{equation}\tag{\theequation}
    &\quad
    \od{}{t} J^{\text{HH}}_1(\vec{u})
    =
    \od{}{t} \vec{1}^T \M (4\I - 5 \D2{a} + \D4{a}) \vec{u}
    =
    \vec{1}^T \M (4\I - 5 \D2{a} + \D4{a}) \partial_t \vec{u}
    \\
    &=
    - \vec{1}^T \M \left(
        \D1 (\vec{u} \D4{b} \vec{u})
      + (\D1 \vec{u}) (\D4{b} \vec{u})
      - 5 \D1 (\vec{u} \D2{b} \vec{u})
      - 5 (\D1 \vec{u}) (\D2{b} \vec{u})
      + 4 \D1 \vec{u}^2
      + 4 \vec{u} \D1 \vec{u}
    \right)
    \\
    &=
    - \vec{1}^T \M \left(
        (\D1 \vec{u}) (\D4{b} \vec{u})
      - 5 (\D1 \vec{u}) (\D2{b} \vec{u})
    \right),
  \end{align*}
  where Lemma~\ref{lem:1-in-left-kernel-of-ImD2inv} and
  Lemma~\ref{lem:1-M-Di} have been used. If $\D1$ commutes with
  $\D2{b}$ and $\D4{b}$,
  \begin{equation}
    \od{}{t} J^{\text{HH}}_1(\vec{u})
    =
    - \vec{u}^T \D1^T \M \D4 \vec{u}
    + 5 \vec{u}^T \D1^T \M \D2 \vec{u}
    =
    0.
  \end{equation}

  Since $4\I - 5 \D2{a} + \D4{a}$ is a symmetric operator,
  the semidiscrete rate of change of the quadratic invariant
  \eqref{eq:hh-invariants-quadratic} is
  \begin{equation}
  \begin{multlined}
    \od{}{t} J^{\text{HH}}_2(\vec{u})
    =
    \frac{1}{2} \od{}{t} \vec{u}^T \M (4\I - 5 \D2{a} + \D4{a}) \vec{u}
    =
    \vec{u}^T \M (4\I - 5 \D2{a} + \D4{a}) \partial_t \vec{u}
    \\
    =
    - \vec{u}^T \M \left(
        \D1 \bigl( \vec{u} (4\I - 5 \D2{b} + \D4{b}) \vec{u} \bigr)
      + (\D1 \vec{u}) (4\I - 5 \D2{b} + \D4{b}) \vec{u}
    \right)
    =
    0.
  \end{multlined}
  \qedhere
  \end{equation}
\end{proof}

A smooth traveling wave solution with speed $c = 1.2$ computed numerically
using the Petviashvili method in the periodic domain $[-40, 40]$ was used to verify
the conservation properties of the semidiscretization
\eqref{eq:hh-inv-periodic-SBP}.
This traveling wave solution has been computed for the PDE
\begin{equation}
  (4 - 5 \partial_x^2 + \partial_x^4) \partial_t u
  + 8 \kappa \partial_x u
  + u \partial_x^5 u
  + 2 (\partial_x u) \partial_x^4 u
  - 5 u \partial_x^3 u
  - 10 (\partial_x u) \partial_x^2 u
  + 12 u \partial_x u
  = 0
\end{equation}
which can be transformed to a solution of the HH equation (with $\kappa = 0$)
by the transformation
\begin{equation}
  x \to x + \kappa t,
  \quad
  u \to u + \kappa.
\end{equation}

\subsubsection{Convergence study in space}
\label{sec:hh-periodic-manufactured-convergence}

For the following convergence study, the method of manufactured solutions
is applied to \eqref{eq:periodic-manufactured}
with periodic boundary conditions
and the semidiscretization \eqref{eq:hh-inv-periodic-SBP} with
$\D2{a} = \D2{b} = \D2$ and $\D4{a} = \D4{b} = \D2^2$.
The results are shown in Figure~\ref{fig:hh-periodic-manufactured-convergence}.

Similarly to the BBM and CH equations, central finite difference methods with
order of accuracy $p$ result in an $\text{EOC} \approx p$.
Wide-stencil operators $\D2 = \D1^2$ for $p \in \set{6, 8}$ show a strong
dependence of the final error on the parity of the number of nodes. Again,
the error is up to an order of magnitude smaller for odd $N$.
For $p \in \set{2, 4}$ or narrow-stencil operators $\D2$, such a dependence
is not visible for this test case.

As for the BBM equation and in contrast to the FW, CH, and DP equations,
nodal continuous Galerkin methods using Lobatto-Legendre bases with wide
stencil operator $\D2 = \D1^2$ yield
$\text{EOC} \approx p+1$ for odd polynomial degrees $p$ and
$\text{EOC} \approx p$ for even $p$.
The error is approximately an order of magnitude smaller for $p = 2$
than for $p = 1$ for the same number of elements $N$. In contrast, $p = 3$
can result in a similar error as $p = 4$ for odd numbers of elements $N$
while the error is up to an order of magnitude bigger for even $N$.

\begin{figure}[htb]
\centering
  \begin{subfigure}[t]{0.49\textwidth}
  \centering
    \includegraphics[width=\textwidth]{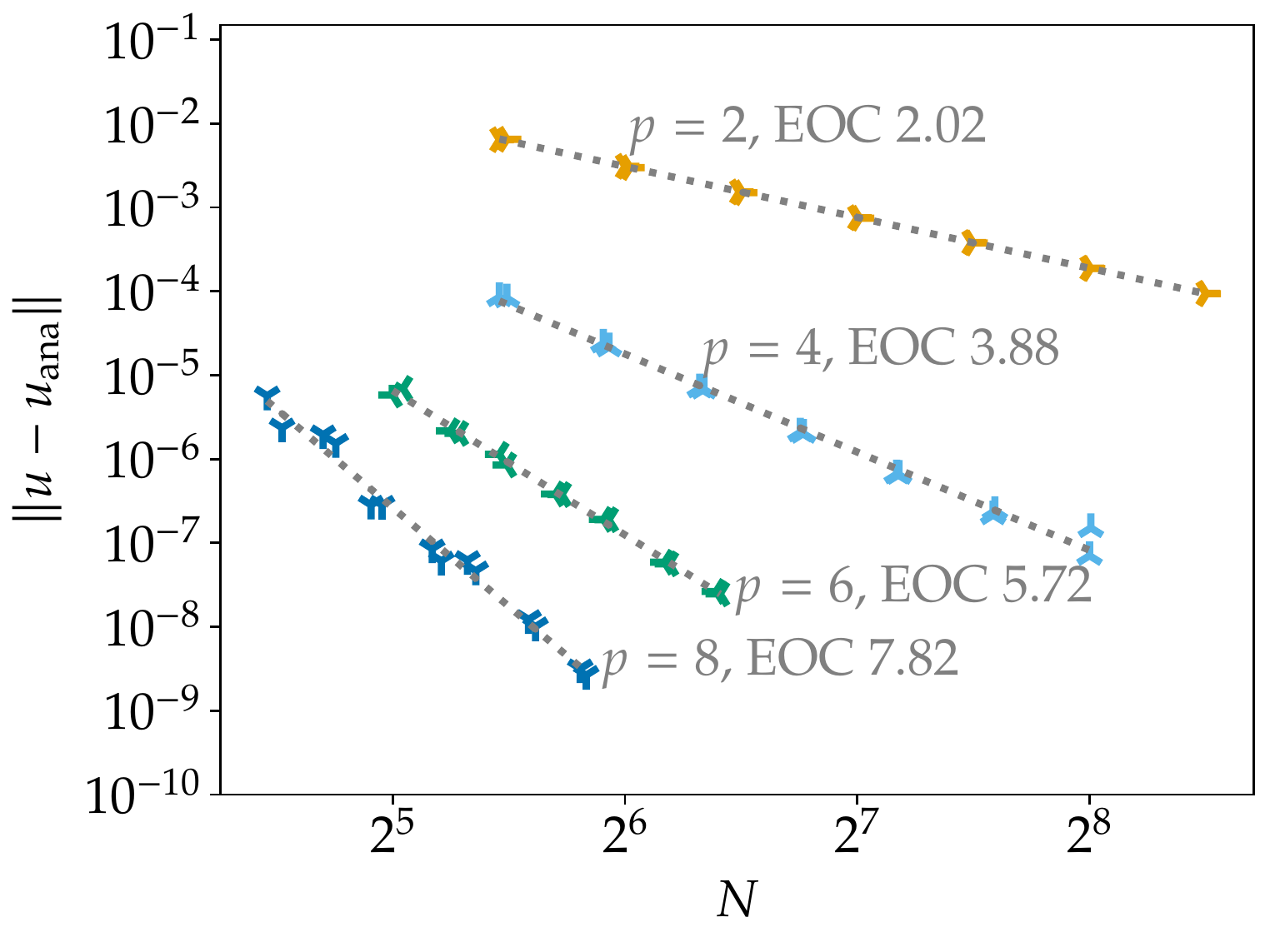}
    \caption{Finite difference methods, narrow stencil $\D2$.}
  \end{subfigure}%
  \hspace*{\fill}
  \begin{subfigure}[t]{0.49\textwidth}
  \centering
    \includegraphics[width=\textwidth]{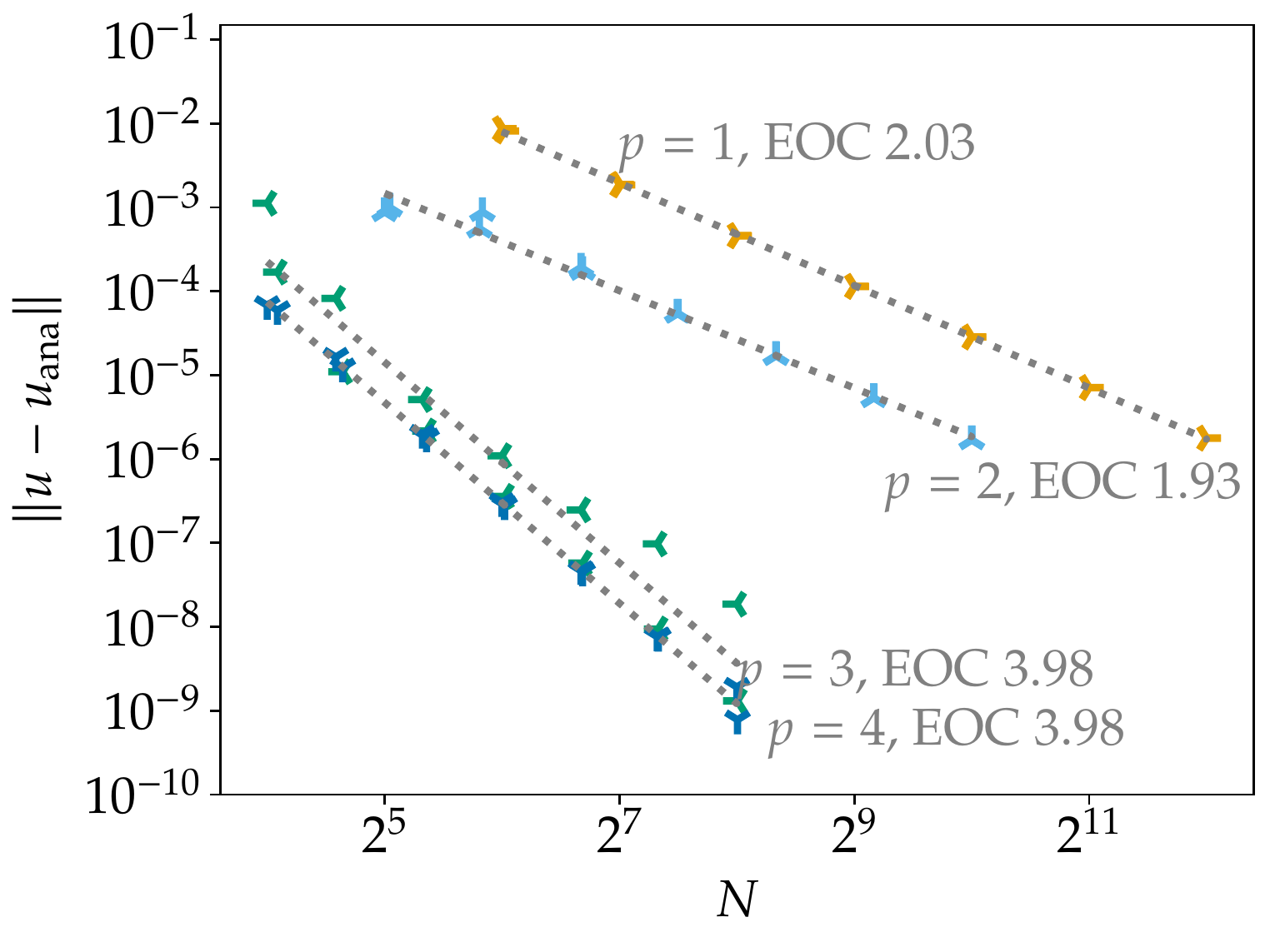}
    \caption{Continuous Galerkin methods, $\D2 = \D1^2$.}
  \end{subfigure}%
  \caption{Convergence results of the spatial semidiscretizations
           \eqref{eq:hh-inv-periodic-SBP} with $\D4 = \D2^2$
           for the manufactured solution \eqref{eq:periodic-manufactured}
           of the HH equation.
           All of these semidiscretizations conserve the linear and quadratic
           invariants \eqref{eq:hh-invariants} of the HH equation
           \eqref{eq:hh-dir}.}
  \label{fig:hh-periodic-manufactured-convergence}
\end{figure}

\subsection{BBM-BBM system}
\label{sec:bbm_bbm}

Consider the system \cite{bona2002boussinesq, bona2004boussinesq,
antonopoulos2009initial, antonopoulos2010numerical}
\begin{equation}
\label{eq:bbm_bbm-dir-periodic}
\begin{aligned}
  \partial_t \eta(t,x)
  + \partial_x u(t,x)
  + \partial_x \bigl( \eta(t,x) u(t,x) \bigr)
  - \partial_t \partial_x^2 \eta(t,x)
  &= 0,
  && t \in (0, T), x \in (\xmin, \xmax),
  \\
  \partial_t u(t,x)
  + \partial_x \eta(t,x)
  + \partial_x \frac{u(t,x)^2}{2}
  - \partial_t \partial_x^2 u(t,x)
  &= 0,
  && t \in (0, T), x \in (\xmin, \xmax),
  \\
  \eta(0, x) &= \eta^0(x),
  && x \in [\xmin, \xmax],
  \\
  u(0, x) &= u^0(x),
  && x \in [\xmin, \xmax],
\end{aligned}
\end{equation}
with periodic boundary conditions, which can also be written as
\begin{equation}
\label{eq:bbm_bbm-inv-periodic}
\begin{aligned}
  \partial_t \eta(t,x)
  + (\I - \partial_{x,P}^2)^{-1} \partial_x \bigl( u(t,x) + \eta(t,x) u(t,x) \bigr)
  &= 0,
  \\
  \partial_t u(t,x)
  + (\I - \partial_{x,P}^2)^{-1} \partial_x \biggl( \eta(t,x) + \frac{u(t,x)^2}{2} \biggr)
  &= 0,
\end{aligned}
\end{equation}
where $(\I - \partial_{x,P}^2)^{-1}$ is the inverse of the elliptic operator
$\I - \partial_x^2$ with periodic boundary conditions.
The functionals
\begin{subequations}
\label{eq:bbm_bbm-invariants}
\begin{align}
\label{eq:bbm_bbm-invariants-mass-eta}
  J^{\text{BBM-BBM}}_1(\eta, u)
  &=
  \int_{\xmin}^{\xmax} \eta,
  \\
\label{eq:bbm_bbm-invariants-mass-u}
  J^{\text{BBM-BBM}}_2(\eta, u)
  &=
  \int_{\xmin}^{\xmax} u,
  \\
\label{eq:bbm_bbm-invariants-energy}
  J^{\text{BBM-BBM}}_3(\eta, u)
  &=
  \int_{\xmin}^{\xmax} (\eta^2 + (1 + \eta) u^2),
\end{align}
\end{subequations}
are invariants of solutions of \eqref{eq:bbm_bbm-dir-periodic}.

Interestingly, only integration by parts but no chain or product rule is
necessary to prove conservation of the energy
\eqref{eq:bbm_bbm-invariants-energy}.
Hence, all three invariants are conserved semidiscretely if periodic SBP
operators are employed and the semidiscretization uses the conservative form
\begin{equation}
\label{eq:bbm_bbm-inv-periodic-SBP}
\begin{aligned}
  \partial_t \vec{\eta}
  + (\I - \D2)^{-1} \D1 ( \vec{u} + \vec{\eta} \vec{u} )
  &= \vec{0},
  \\
  \partial_t \vec{u}
  + (\I - \D2)^{-1} \D1 \biggl( \vec{\eta} + \frac{1}{2} \vec{u}^2 \biggr)
  &= \vec{0}.
\end{aligned}
\end{equation}

\begin{theorem}
\label{thm:bbm_bbm-inv-periodic-SBP}
  If $\D1$ is a periodic first-derivative SBP operator
  and $\D2$ is  a periodic second-derivative SBP operator,
  then the semidiscretization \eqref{eq:bbm_bbm-inv-periodic-SBP}
  conserves the linear invariants \eqref{eq:bbm_bbm-invariants-mass-eta} and
  \eqref{eq:bbm_bbm-invariants-mass-u} of \eqref{eq:bbm_bbm-dir-periodic}.
  If $\D1$ \& $\D2$ commute, then the quadratic invariant
  \eqref{eq:bbm_bbm-invariants-energy} is also conserved.
\end{theorem}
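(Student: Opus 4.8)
The plan is to dispatch the two linear invariants first, since these follow directly from the periodic-SBP lemmas and need neither commuting operators nor any special structure of $\M$, and then to obtain the energy \eqref{eq:bbm_bbm-invariants-energy} as the discrete shadow of the skew-adjoint (Hamiltonian) structure that makes the continuous proof work with integration by parts alone.

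For the $\eta$-mass $J^{\text{BBM-BBM}}_1 = \vec{1}^T \M \vec{\eta}$ I would differentiate in time, insert the first line of \eqref{eq:bbm_bbm-inv-periodic-SBP}, remove the elliptic inverse with Lemma~\ref{lem:1-in-left-kernel-of-ImD2inv} (so that $\vec{1}^T \M (\I - \D2)^{-1} = \vec{1}^T \M$), and then annihilate the remaining $\vec{1}^T \M \D1(\vec{u} + \vec{\eta}\vec{u})$ by Lemma~\ref{lem:1-M-Di}. This yields $\od{}{t} J^{\text{BBM-BBM}}_1 = 0$ for an arbitrary symmetric positive-definite $\M$ and without any commutativity assumption; the $u$-mass $J^{\text{BBM-BBM}}_2 = \vec{1}^T \M \vec{u}$ is handled identically with the flux $\vec{\eta} + \tfrac12 \vec{u}^2$ replacing $\vec{u} + \vec{\eta}\vec{u}$.

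For the energy I would write the scheme in skew-gradient form $\partial_t \vec{\eta} = -(\I - \D2)^{-1} \D1\, B$ and $\partial_t \vec{u} = -(\I - \D2)^{-1} \D1\, A$, with fluxes $A = \vec{u} + \vec{\eta}\vec{u}$ and $B = \vec{\eta} + \tfrac12 \vec{u}^2$, and note that (for a diagonal $\M$, as in the Fourier and finite-difference methods used here) these fluxes are exactly half the variational derivatives of the discrete energy: $\nabla_{\vec{\eta}} J^{\text{BBM-BBM}}_3 = \M(2\vec{\eta} + \vec{u}^2) = 2\M B$ and $\nabla_{\vec{u}} J^{\text{BBM-BBM}}_3 = \M(2\vec{u} + 2\vec{\eta}\vec{u}) = 2\M A$. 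Differentiating and substituting the equations then gives $\tfrac12 \od{}{t} J^{\text{BBM-BBM}}_3 = -B^T \M (\I - \D2)^{-1} \D1\, A - A^T \M (\I - \D2)^{-1} \D1\, B$, and the single decisive step is that $\M (\I - \D2)^{-1} \D1$ is skew-symmetric by Lemma~\ref{lem:D1-D2-commuting} — this is precisely where the commutativity of $\D1$ and $\D2$ enters. Skew-symmetry forces $A^T \M (\I - \D2)^{-1} \D1\, B = -B^T \M (\I - \D2)^{-1} \D1\, A$, so the two contributions cancel, mirroring the continuous fact that $(\I - \partial_{x,P}^2)^{-1} \partial_x$ is skew-adjoint.

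The main obstacle is the cubic term $\vec{\eta}\vec{u}^2$ in the energy: identifying $\nabla J^{\text{BBM-BBM}}_3$ with the fluxes $A, B$ presupposes a discrete product rule for this term, i.e. that $\M$ commutes with pointwise multiplication. I would therefore differentiate the cubic part $\vec{1}^T \M (\vec{\eta}\vec{u}^2)$ explicitly, rewrite $\vec{u}^T \M (\vec{u} \cdot \vec{w})$ as $(\vec{u}^2)^T \M \vec{w}$ using diagonality, and then check term by term that the resulting genuinely cubic expressions pair with their transposes and vanish under the skew-symmetry of $\M (\I - \D2)^{-1} \D1$. I expect the linear cross-terms $\vec{\eta}^T \M (\I - \D2)^{-1} \D1\, \vec{u} + \vec{u}^T \M (\I - \D2)^{-1} \D1\, \vec{\eta}$ to drop out immediately by skew-symmetry, leaving only the cubic terms to be balanced; the one point demanding care is that this balancing, unlike the linear-invariant argument, is not available for a general non-diagonal $\M$, so the compatibility of $\M$ with pointwise products is the real content behind the energy statement.
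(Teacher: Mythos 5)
Your argument is correct and follows essentially the same route as the paper's proof: the linear invariants via Lemma~\ref{lem:1-in-left-kernel-of-ImD2inv} and Lemma~\ref{lem:1-M-Di}, and the energy via skew-symmetry of $L = \M (\I - \D2)^{-1} \D1$ from Lemma~\ref{lem:D1-D2-commuting}. The only presentational difference is that the paper expands $\od{}{t} J^{\text{BBM-BBM}}_3$ into eight bilinear/trilinear terms and cancels them in pairs, whereas you observe that the fluxes $A = \vec{u} + \vec{\eta}\vec{u}$ and $B = \vec{\eta} + \tfrac{1}{2}\vec{u}^2$ are exactly half the gradients of the discrete energy, so the rate collapses at once to $-2\bigl(B^T L A + A^T L B\bigr) = 0$; this is a streamlining of the same computation, not a different argument. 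One point where your extra care pays off: you correctly flag that rewriting $\od{}{t}\bigl((\vec{u}^2)^T \M (\vec{1}+\vec{\eta})\bigr)$ as $\langle \partial_t\vec{\eta}, \vec{u}^2\rangle_{\M} + \langle \partial_t\vec{u}, 2(\vec{1}+\vec{\eta})\vec{u}\rangle_{\M}$ requires moving the pointwise factor $\vec{u}$ across $\M$, i.e.\ that $\M$ commute with pointwise multiplication (in practice, that $\M$ be diagonal). The theorem as stated omits this hypothesis for the energy claim --- and the surrounding text even presents the BBM-BBM system as the one case not needing a diagonal mass matrix --- yet the paper's own first line of the energy computation uses it silently, so your caution about the cubic term is warranted rather than redundant.
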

\begin{proof}
  Conservation of the linear invariants $J^{\text{BBM-BBM}}_1,
  J^{\text{BBM-BBM}}_2$ follows immediately from
  Lemmas~\ref{lem:1-M-Di} and \ref{lem:1-in-left-kernel-of-ImD2inv}.
  Given a matrix $A$, set $\langle\vec{x}, \vec{y}\rangle_{A} =
  \vec{x}^T A \vec{y}$.
  Conservation of $J^{\text{BBM-BBM}}_3$ follows from Lemmas~\ref{lem:1-M-Di}
  and \ref{lem:D1-D2-commuting}, since (letting $L = \M (\I - \D2)^{-1} \D1$)
  \begin{align*}
    \od{}{t} J^{\text{BBM-BBM}}_3(\vec{\eta}, \vec{u})
    &=
    \od{}{t} \left( \langle \vec{\eta}, \vec{\eta} \rangle_{\M} + \langle \vec{u}^2, \vec{1} + \vec{\eta}\rangle_{\M} \right)
    =
    \langle \partial_t\vec{\eta} ,2\vec{\eta} + \vec{u}^2\rangle_M + \langle\partial_t \vec{u}, 2(1+\vec{\eta})\vec{u} \rangle_M
    \\
    &=
    -\langle \vec{u}+\vec{\eta}\vec{u}, 2\vec{\eta} + \vec{u}^2 \rangle_L
    - 2\langle \vec{\eta} + \frac{1}{2}\vec{u}^2, (1+\vec{\eta})\vec{u} \rangle_L
    \\
    &=
    -2\langle \vec{u},\vec{\eta} \rangle_L - 2 \langle \vec{\eta u}, \vec{\eta} \rangle_L
    - \langle \vec{u}, \vec{u}^2 \rangle_L - \langle \vec{\eta u}, \vec{u^2} \rangle_L
    -2 \langle \vec{\eta}, \vec{u} \rangle_L - \langle \vec{u}^2, \vec{u} \rangle_L
    \\&\quad
    -2 \langle \vec{\eta}, \vec{\eta u} \rangle_L -  \langle \vec{u}^2, \vec{\eta u} \rangle_L
    = 0,
  \end{align*}
  because $L$ is skew-symmetric by Lemma~\ref{lem:D1-D2-commuting}.
\end{proof}

\subsubsection*{Reflecting boundary conditions}

Another interesting set of boundary conditions is given by
reflecting boundary conditions, \ie
\begin{equation}
\label{eq:bbm_bbm-dir-reflecting}
\begin{aligned}
  \partial_t \eta(t,x)
  + \partial_x u(t,x)
  + \partial_x \bigl( \eta(t,x) u(t,x) \bigr)
  - \partial_t \partial_x^2 \eta(t,x)
  &= 0,
  && t \in (0, T), x \in (\xmin, \xmax),
  \\
  \partial_t u(t,x)
  + \partial_x \eta(t,x)
  + \partial_x \frac{u(t,x)^2}{2}
  - \partial_t \partial_x^2 u(t,x)
  &= 0,
  && t \in (0, T), x \in (\xmin, \xmax),
  \\
  \partial_x \eta(t,x) &= 0,
  && t \in (0, T), x \in \{\xmin, \xmax\},
  \\
  u(t,x) &= 0,
  && t \in (0, T), x \in \{\xmin, \xmax\},
  \\
  \eta(0, x) &= \eta^0(x),
  && x \in [\xmin, \xmax],
  \\
  u(0, x) &= u^0(x),
  && x \in [\xmin, \xmax].
\end{aligned}
\end{equation}
Such boundary conditions occur in the study of two-way dispersive waves
in fluids subject to solid wall boundary conditions \cite{antonopoulos2009initial}.
For this system, the total mass of $\eta$
\eqref{eq:bbm_bbm-invariants-mass-eta}
and the total energy
\eqref{eq:bbm_bbm-invariants-energy}
are still conserved, but the total mass of $u$
\eqref{eq:bbm_bbm-invariants-mass-u}
is not necessarily conserved.

To create a conservative semidiscretization, the following
operators will be used.
The projection operator $\proj{D} = \diag(0, 1, \dots, 1, 0)$
maps onto the space of grid functions with homogeneous Dirichlet
boundary conditions.
For an SBP first-derivative operator $\D1$, denote the second
derivative operator induced by $\I - \D1^2$
\begin{itemize}
  \item
  with strong imposition of homogeneous Dirichlet boundary
  conditions as $\I - \D2{D}$. The corresponding solution
  operator satisfies
  \begin{equation}
    \vec{v} = (\I - \D2{D})^{-1} \vec{w}
    \iff
    \proj{D} (\I - \D2{D}) \vec{v} = \proj{D} \vec{w}
    \land
    (\I - \proj{D}) \vec{v} = \vec{0}.
  \end{equation}

  \item
  with weak-strong imposition of homogeneous Neumann boundary
  conditions as $\I - \D2{N}$. The corresponding solution
  operator satisfies
  \begin{equation}
  \label{eq:D2N}
    \vec{v} = (\I - \D2{N})^{-1} \vec{w}
    \iff
    (\I + \M^{-1} \D1^T \M \proj{D} \D1) \vec{v} = \vec{w}.
  \end{equation}
  This discretization uses neither a strong imposition of boundary
  conditions (setting $\vec{e}_{L/R}^T \D1 (\I - \D2{N})^{-1} = \vec{0}^T$
  and solving the PDE in the interior) nor the usual weak
  imposition (where \eqref{eq:D2N} is used without the additional
  projection $\proj{D}$).
\end{itemize}
These operators correspond to $\I - \D1^2$ in the interior and are
modified near the boundaries to impose the boundary conditions.
Using these operators results in the semidiscretization
\begin{equation}
\label{eq:bbm_bbm-inv-reflecting-SBP}
\begin{aligned}
  \partial_t \vec{\eta}
  + (\I - \D2{N})^{-1} \D1 ( \vec{u} + \vec{\eta} \vec{u} )
  &= \vec{0},
  \\
  \partial_t \vec{u}
  + (\I - \D2{D})^{-1} \D1 \biggl( \vec{\eta} + \frac{1}{2} \vec{u}^2 \biggr)
  &= \vec{0}.
\end{aligned}
\end{equation}

\begin{theorem}
\label{thm:bbm_bbm-inv-reflecting-SBP}
  If $\D1$ is a first-derivative SBP operator
  and the initial condition for $\vec{u}$ satisfies the homogeneous
  Dirichlet boundary condition,
  then the semidiscretization \eqref{eq:bbm_bbm-inv-reflecting-SBP}
  conserves the two invariants of \eqref{eq:bbm_bbm-dir-reflecting},
  \ie the total mass of $\eta$
  \eqref{eq:bbm_bbm-invariants-mass-eta}
  and the total energy
  \eqref{eq:bbm_bbm-invariants-energy}.
\end{theorem}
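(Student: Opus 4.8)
The plan is to follow the same route as the proof of Theorem~\ref{thm:bbm_bbm-inv-periodic-SBP}, the only new ingredient being a careful bookkeeping of the boundary contributions produced by the non-periodic SBP relation $\M\D1 + \D1^T\M = \eR\eR^T - \eL\eL^T$. Write the two fluxes $\vec{g} := (\vec{1} + \vec{\eta})\vec{u}$ and $\vec{h} := \vec{\eta} + \tfrac{1}{2}\vec{u}^2$, so that the scheme reads $\partial_t\vec{\eta} = -(\I - \D2{N})^{-1}\D1\vec{g}$ and $\partial_t\vec{u} = -(\I - \D2{D})^{-1}\D1\vec{h}$, and note the algebraic identities $2\vec{h} = 2\vec{\eta} + \vec{u}^2$ and $2\vec{g} = 2(\vec{1}+\vec{\eta})\vec{u}$. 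Before the main computation I would record three preliminary facts: (i) because the Dirichlet solution operator returns grid functions with vanishing endpoint values, $(\I - \proj{D})\partial_t\vec{u} = \vec{0}$, so $\vec{u}$ retains the homogeneous Dirichlet values of its initial datum for all $t$, and hence $\eL^T\vec{g} = \eR^T\vec{g} = 0$ throughout; (ii) $(\I - \D2{N})\vec{1} = \vec{1}$, since $\D1\vec{1} = \vec{0}$; and (iii) both $\M(\I - \D2{N})^{-1}$ and $\M(\I - \D2{D})^{-1}$ are symmetric, which follows from the SBP relation together with the fact that $\M$ commutes with $\proj{D}$ (as holds for a diagonal mass matrix).

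For the mass of $\eta$, facts (ii)--(iii) give $\vec{1}^T\M(\I - \D2{N})^{-1} = \vec{1}^T\M$ exactly as in Lemma~\ref{lem:1-in-left-kernel-of-ImD2inv}, so that
\begin{equation*}
  \od{}{t}\vec{1}^T\M\vec{\eta}
  = -\vec{1}^T\M(\I - \D2{N})^{-1}\D1\vec{g}
  = -\vec{1}^T\M\D1\vec{g}
  = -(\eR^T\vec{g} - \eL^T\vec{g})
  = 0,
\end{equation*}
where the third equality uses $\vec{1}^T\D1^T = \vec{0}^T$ and the SBP relation, and the last uses fact (i).

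For the energy I would reuse the product-rule manipulation of Theorem~\ref{thm:bbm_bbm-inv-periodic-SBP} to obtain $\od{}{t}J^{\text{BBM-BBM}}_3 = 2\vec{h}^T\M\partial_t\vec{\eta} + 2\vec{g}^T\M\partial_t\vec{u}$. Substituting the scheme, using the symmetry from fact (iii) to move the elliptic inverses, and then invoking the SBP relation, every boundary term that appears carries a factor $\eL^T\vec{g}$ or $\eR^T\vec{g}$ and therefore drops out by fact (i); what survives is
\begin{equation*}
  \tfrac{1}{2}\od{}{t}J^{\text{BBM-BBM}}_3
  = \vec{g}^T\M\bigl( \D1(\I - \D2{N})^{-1} - (\I - \D2{D})^{-1}\D1 \bigr)\vec{h}.
\end{equation*}
Since $\vec{g} = \proj{D}\vec{g}$ and $\M$ commutes with $\proj{D}$, it suffices to show that the interior part of the bracket vanishes.

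The crux, and the step I expect to be the main obstacle, is therefore the discrete commutation identity
\begin{equation*}
  \proj{D}\,\D1\,(\I - \D2{N})^{-1} = (\I - \D2{D})^{-1}\,\D1 .
\end{equation*}
This is the exact discrete analogue of the continuous fact that differentiation sends a homogeneous-Neumann solution of $\I - \partial_x^2$ to a homogeneous-Dirichlet solution of the differentiated problem, which is precisely why $\D2{N}$ and $\D2{D}$ were defined through their respective solution operators in \eqref{eq:D2N}. To prove it I would set $\vec{p} = (\I - \D2{N})^{-1}\vec{h}$, so that $\M\vec{h} = \M\vec{p} + \D1^T\M\proj{D}\D1\vec{p}$, and verify that $\proj{D}\D1\vec{p}$ satisfies the two defining relations of the Dirichlet solve applied to $\D1\vec{h}$: it has vanishing endpoint values (immediate from the leading $\proj{D}$), and $\proj{D}(\I - \D2{D})(\proj{D}\D1\vec{p}) = \proj{D}\D1\vec{h}$. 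Uniqueness of the Dirichlet solution operator then forces $\proj{D}\D1\vec{p} = (\I - \D2{D})^{-1}\D1\vec{h}$, which is the required identity. Establishing the second relation is where the shared $\I - \D1^2$ structure of the two boundary operators must be used explicitly, and it is the only genuinely non-routine computation in the argument.
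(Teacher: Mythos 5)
Your proposal is correct, and for the energy part it takes a genuinely different route from the paper. The mass-of-$\eta$ argument is essentially the paper's (the analogue of Lemma~\ref{lem:1-in-left-kernel-of-ImD2inv} plus the bounded-domain SBP relation and the Dirichlet values of $\vec{u}$). For the energy, the paper never isolates a structural identity: it adds the auxiliary zero cross-term $(-\D1\partial_t\vec{u})^T\M\partial_t\vec{\eta} + (-\D1\partial_t\vec{\eta})^T\M\partial_t\vec{u}$ (which vanishes by SBP and the Dirichlet condition on $\partial_t\vec{u}$), rewrites the scheme in the form $(\I-\D2{N})\partial_t\vec{\eta} = -\D1\vec{g}$, $(\I-\D2{D})\partial_t\vec{u} = -\D1\vec{h}$, substitutes $\M\D2{N} = -\D1^T\M\proj{D}\D1$, and cancels everything term by term. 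You instead reduce the whole computation to the single commutation identity $\proj{D}\,\D1\,(\I-\D2{N})^{-1} = (\I-\D2{D})^{-1}\D1$, which you correctly flag as the crux. That identity does hold, and your strategy for it (verify the two defining relations of the Dirichlet solve and invoke uniqueness) goes through: with $\vec{p} = (\I-\D2{N})^{-1}\vec{h}$ one has $\D1\vec{h} = \D1\vec{p} + \D1\M^{-1}\D1^T\M\proj{D}\D1\vec{p}$, and since $\M^{-1}\D1^T\M = \M^{-1}(\eR\eR^T-\eL\eL^T) - \D1$ and $\eR^T\proj{D} = \eL^T\proj{D} = \vec{0}^T$, this collapses to $\proj{D}\D1\vec{h} = \proj{D}(\I-\D1^2)(\proj{D}\D1\vec{p})$, which is exactly the interior relation defining $(\I-\D2{D})^{-1}\D1\vec{h}$. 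What your route buys is a reusable discrete analogue of the continuous fact that $\partial_x$ maps the Neumann solve to the Dirichlet solve, at the cost of needing the symmetry of $\M(\I-\D2{N})^{-1}$ and the commutation $\M\proj{D}=\proj{D}\M$ stated explicitly; note that the paper's own proof uses the latter just as tacitly (it silently replaces $\M\partial_t\vec{u}$ by $\proj{D}^T\M\partial_t\vec{u}$, and moves pointwise products through $\M$), so this is not an extra hypothesis you have introduced but one the theorem implicitly carries in both arguments.
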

\begin{proof}
  The semidiscretization \eqref{eq:bbm_bbm-inv-reflecting-SBP}
  can be written as
  \begin{equation}
  \label{eq:bbm_bbm-dir-reflecting-SBP}
  \begin{aligned}
    (\I - \D2{N}) \partial_t \vec{\eta}
    + \D1 ( \vec{u} + \vec{\eta} \vec{u} )
    &= \vec{0},
    \\
    (\I - \D2{D}) \partial_t \vec{u}
    + \D1 \biggl( \vec{\eta} + \frac{1}{2} \vec{u}^2 \biggr)
    &= \vec{0}.
  \end{aligned}
  \end{equation}
  Since $\vec{e}_{L/R}^T \partial_t \vec{u} = 0$, the homogeneous
  Dirichlet boundary condition for $\vec{u}$ is satisfied for all
  times if it is satisfied initially.

  The total mass of $\eta$ is conserved, since the strong imposition
  of homogeneous Dirichlet boundary conditions for $\vec{u}$ yields
  \begin{equation}
  \begin{aligned}
    \vec{1}^T \M \partial_t \vec{\eta}
    &=
    - \vec{1}^T \M \D1 ( \vec{u} + \vec{\eta} \vec{u} )
    + \vec{1}^T \M \D2{N} \partial_t \vec{\eta}
    \\
    &=
    - \vec{1}^T \M \D1 ( \vec{u} + \vec{\eta} \vec{u} )
    - \vec{1}^T \D1^T \M \proj{D} \D1 \partial_t \vec{\eta}
    \\
    &=
    - \vec{1}^T (\eR \eR^T - \eL \eL^T) ( \vec{u} + \vec{\eta} \vec{u} )
    = 0.
  \end{aligned}
  \end{equation}
  To compute the semidiscrete rate of change of the energy,
  observe that
  \begin{equation}
    (-\D1 \partial_t \vec{u})^T \M \partial_t \vec{\eta}
    + (-\D1 \partial_t \vec{\eta})^T \M \partial_t \vec{u}
    =
    0.
  \end{equation}
  Hence,
  \begin{equation}
  \begin{aligned}
    \od{}{t} J^{\text{BBM-BBM}}_3(\vec{\eta}, \vec{u})
    &=
    (2\vec{\eta} + \vec{u}^2)^T \M \partial_t \vec{\eta}
    + 2 (\vec{u} + \vec{\eta} \vec{u})^T \M \partial_t \vec{u}
    \\
    &=
    (2\vec{\eta} + \vec{u}^2 - 2 \D1 \partial_t \vec{u})^T \M \partial_t \vec{\eta}
    + 2 (\vec{u} + \vec{\eta} \vec{u} - 2 \D1 \partial_t \vec{\eta})^T \M \partial_t \vec{u}
    \\
    &=
    (2\vec{\eta} + \vec{u}^2 - 2 \D1 \partial_t \vec{u})^T \M \partial_t \vec{\eta}
    + 2 (\vec{u} + \vec{\eta} \vec{u} - 2 \D1 \partial_t \vec{\eta})^T \proj{D}^T \M \partial_t \vec{u},
  \end{aligned}
  \end{equation}
  because of the strong imposition of the homogeneous Dirichlet
  boundary conditions for $\partial_t \vec{u}$.
  Inserting the semidiscretization \eqref{eq:bbm_bbm-dir-reflecting-SBP}
  and using again the homogeneous Dirichlet boundary conditions for
  $\vec{u}$ results in
  \begin{equation}
  \begin{aligned}
    \od{}{t} J^{\text{BBM-BBM}}_3(\vec{\eta}, \vec{u})
    &=
    - (2\vec{\eta} + \vec{u}^2 - 2 \D1 \partial_t \vec{u})^T \M ( \D1 ( \vec{u} + \vec{\eta} \vec{u}) - \D2{N} \partial_t \vec{\eta} )
    \\&\quad
    - (\vec{u} + \vec{\eta} \vec{u} - 2 \D1 \partial_t \vec{\eta})^T \proj{D}^T \M ( \D1 ( 2 \vec{\eta} + \vec{u}^2 ) - \D2{D} \partial_t \vec{u} )
    \\
    &=
    + (2\vec{\eta} + \vec{u}^2 - 2 \D1 \partial_t \vec{u})^T \D1^T \M \proj{D} ( ( \vec{u} + \vec{\eta} \vec{u}) + \D1 \partial_t \vec{\eta} )
    \\&\quad
    - (\vec{u} + \vec{\eta} \vec{u} - 2 \D1 \partial_t \vec{\eta})^T \proj{D}^T \M \D1 ( ( 2 \vec{\eta} + \vec{u}^2 ) - \D1 \partial_t \vec{u} )
    = 0.
  \qedhere
  \end{aligned}
  \end{equation}
\end{proof}

\begin{remark}
\label{rem:bbm_bbm-traveling-wave}
  A smooth traveling wave solution with speed $c = 1.2$ computed numerically
  using the Petviashvili method in the periodic domain $[-90, 90]$ was used to
  verify the conservation properties of the semidiscretizations
  \eqref{eq:bbm_bbm-inv-periodic-SBP} and \eqref{eq:bbm_bbm-inv-reflecting-SBP}.
  As expected, relaxation methods in time conserve all linear and the chosen
  nonlinear invariant up to roundoff errors; \cf Section~\ref{sec:bbm_bbm-reflecting}.
\end{remark}

\subsubsection{Convergence study in space}
\label{sec:bbm_bbm-convergence}

For the following convergence study, the method of manufactured solutions
is applied to
\begin{equation}
\label{eq:bbm_bbm-periodic-manufactured}
  \eta(t,x) = \e^{t} \cos(2 \pi (x - 2t)),
  \qquad
  u(t,x) = \e^{t/2} \sin(2 \pi (x - t/2)),
\end{equation}
with periodic boundary conditions in the domain $[0, 1]$ for $t \in [0, 1]$.
The results are shown in Figure~\ref{fig:bbm_bbm-periodic-manufactured-convergence}.

Similarly to the single BBM equation, central finite difference methods with
order of accuracy $p$ yield an $\text{EOC} \approx p$. Again, the results for
wide-stencil and narrow-stencil second-derivative operators are similar but
the narrow-stencil operators result again in smaller errors
(up to an order of magnitude).

The results for nodal continuous Galerkin methods using Lobatto-Legendre bases
are very similar to the ones for the single BBM equation.
Wide-stencil operators $\D2 = \D1^2$ yield
$\text{EOC} \approx p+1$ for odd polynomial degrees $p$ and
$\text{EOC} \approx p$ for even $p$.
In contrast, the usual narrow-stencil approximation results in an
$\text{EOC} \approx p+1$ for $p = 1$ and an
$\text{EOC} \approx p+2$ for $p > 1$.
However, in contrast to the single BBM equation, only the wide-stencil
operators yield energy-conservative methods.

A similar observation can be made for nodal discontinuous Galerkin methods.
There, wide-stencil operators $\D2 = \D1^2$ yield
$\text{EOC} \approx p+1$ for even polynomial degrees $p$ and
$\text{EOC} \approx p$ for odd $p$.
The narrow-stencil LDG operator $\D2 = \D1{+} \D1{-}$ results in an
$\text{EOC} \approx p+1$ for all $p$.
Again, only the narrow-stencil operators result in energy-conservative
methods.

\begin{figure}[htbp]
\centering
  \begin{subfigure}[t]{0.49\textwidth}
  \centering
    \includegraphics[width=\textwidth]{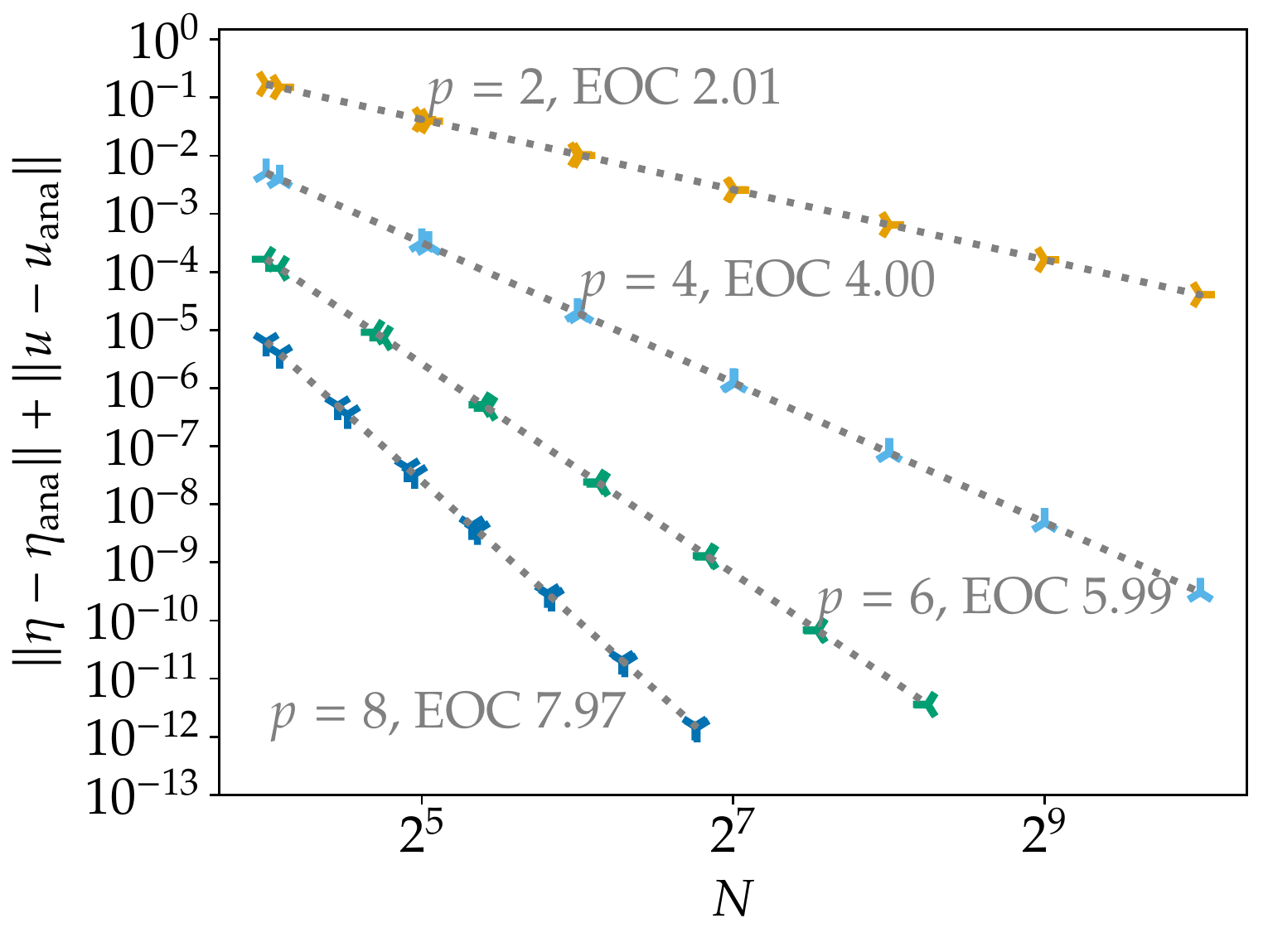}
    \caption{Finite difference methods, wide stencil $\D2 = \D1^2$.}
  \end{subfigure}%
  \hspace*{\fill}
  \begin{subfigure}[t]{0.49\textwidth}
  \centering
    \includegraphics[width=\textwidth]{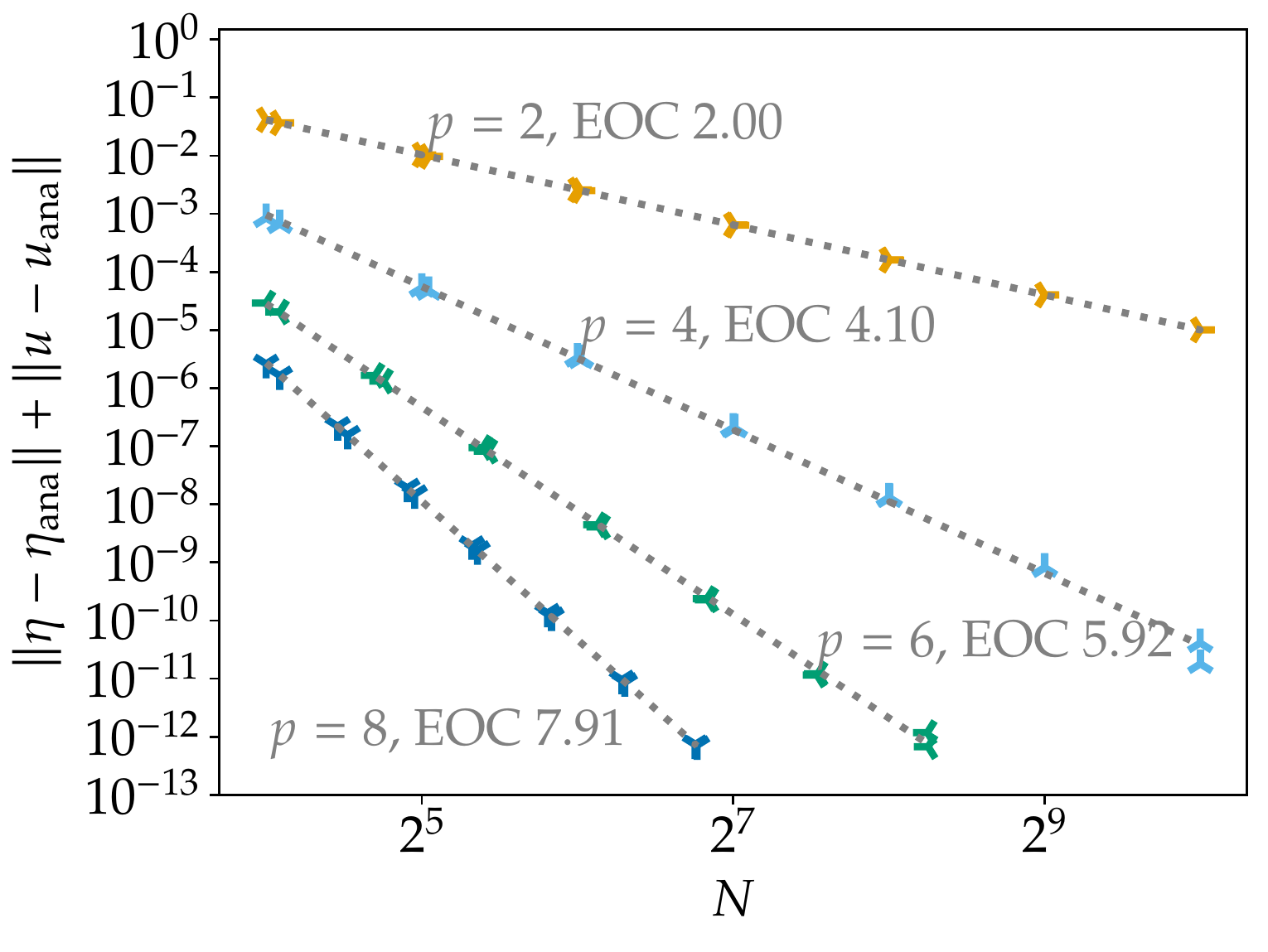}
    \caption{Finite difference methods, narrow stencil $\D2$.}
  \end{subfigure}%
  \\
  \begin{subfigure}[t]{0.8\textwidth}
  \centering
    \includegraphics[width=\textwidth]{figures/Galerkin_legend_p1_p6}
  \end{subfigure}%
  \\
  \begin{subfigure}[t]{0.49\textwidth}
  \centering
    \includegraphics[width=\textwidth]{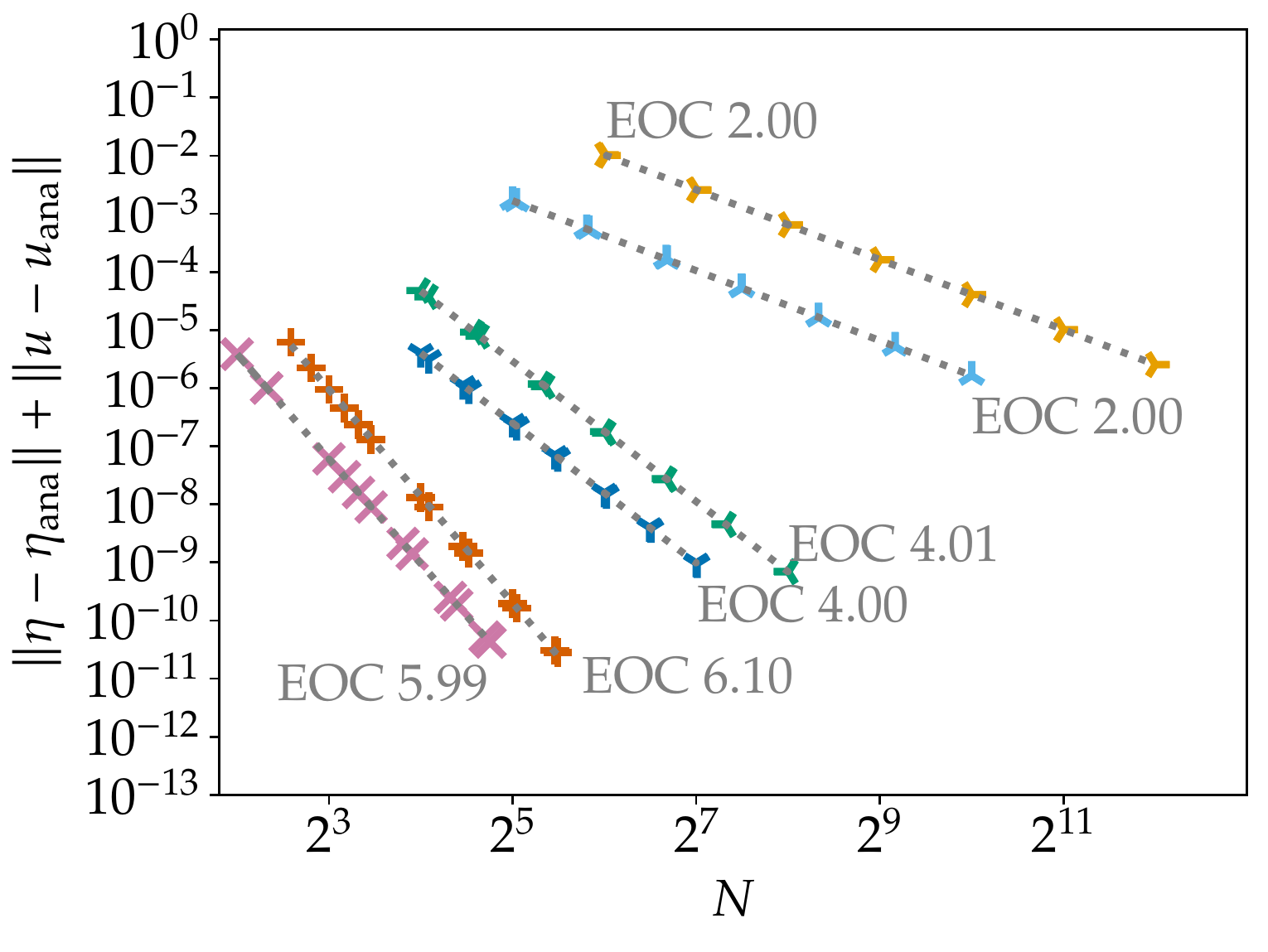}
    \caption{Continuous Galerkin methods, $\D2 = \D1^2$.}
  \end{subfigure}%
  \hspace*{\fill}
  \begin{subfigure}[t]{0.49\textwidth}
  \centering
    \includegraphics[width=\textwidth]{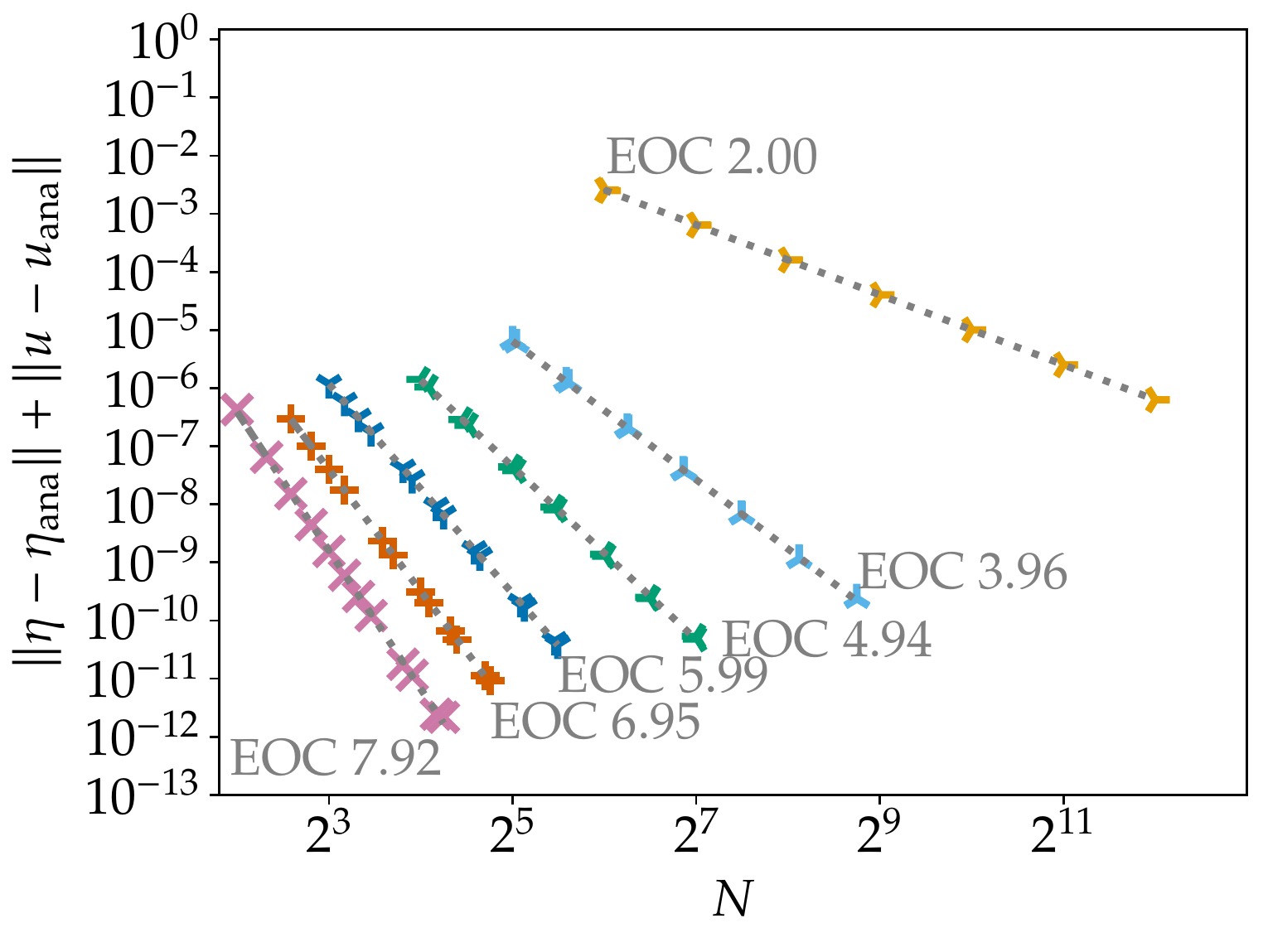}
    \caption{Continuous Galerkin methods, narrow stencil $\D2$.}
  \end{subfigure}%
  \\
  \begin{subfigure}[t]{0.49\textwidth}
  \centering
    \includegraphics[width=\textwidth]{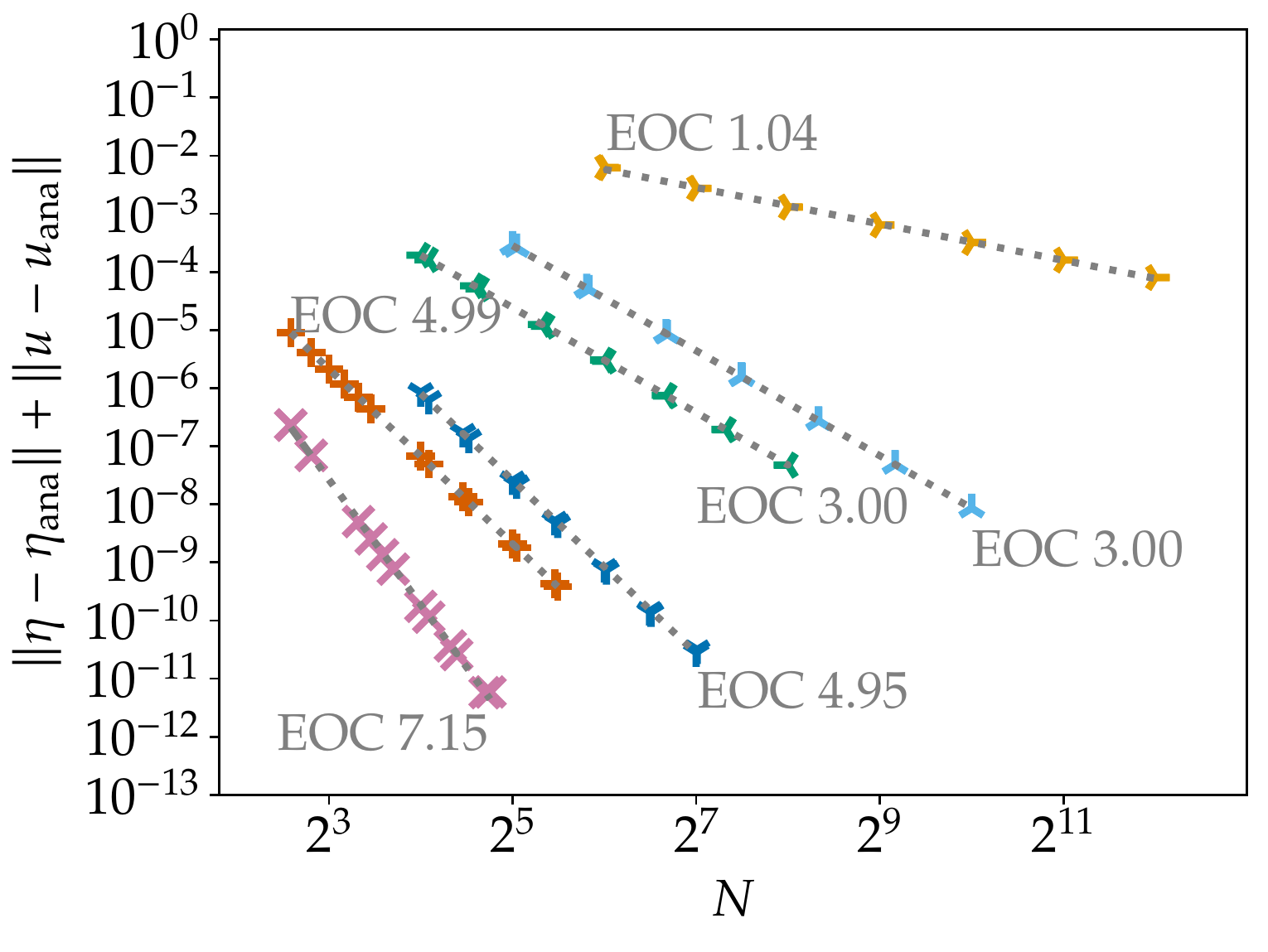}
    \caption{Discontinuous Galerkin methods, $\D2 = \D1^2$.}
  \end{subfigure}%
  \hspace*{\fill}
  \begin{subfigure}[t]{0.49\textwidth}
  \centering
    \includegraphics[width=\textwidth]{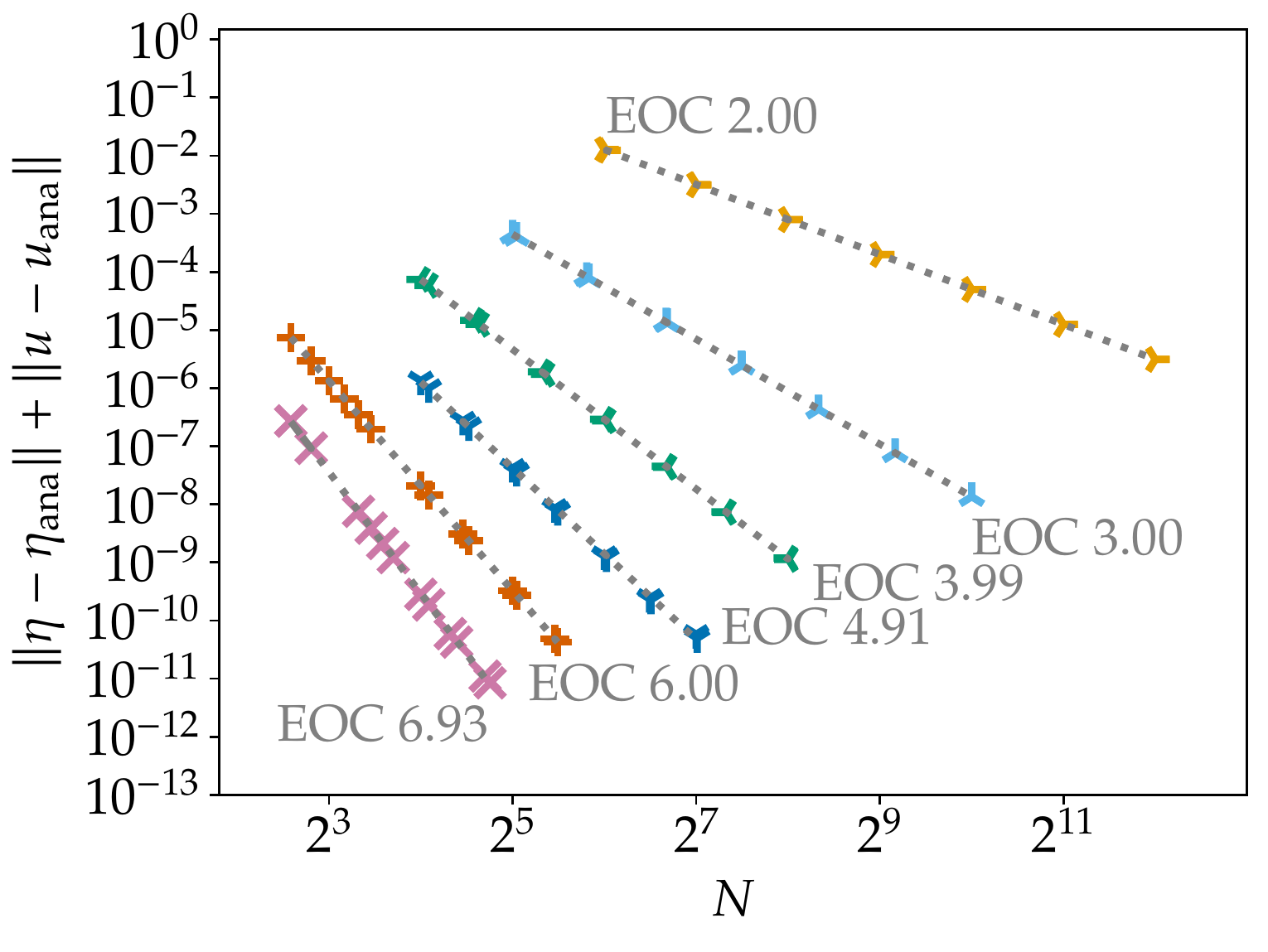}
    \caption{Discontinuous Galerkin methods, $\D2 = \D1{+} \D1{-}$.}
  \end{subfigure}%
  \caption{Convergence results of the spatial semidiscretizations
           \eqref{eq:bbm_bbm-inv-periodic-SBP}
           for the manufactured solution \eqref{eq:bbm_bbm-periodic-manufactured}
           of the BBM-BBM system.
           All of these semidiscretizations conserve the linear invariants
           \eqref{eq:bbm_bbm-invariants} of the BBM-BBM system
           \eqref{eq:bbm_bbm-dir-periodic}.
           The FD methods and the Galerkin methods with wide stencil $\D2$
           conserve the quadratic invariant as well.}
  \label{fig:bbm_bbm-periodic-manufactured-convergence}
\end{figure}

\subsubsection*{Reflecting boundary conditions}

A similar procedure is used for reflecting boundary conditions.
The method of manufactured solutions is applied to
\begin{equation}
\label{eq:bbm_bbm-reflecting-manufactured}
  \eta(t,x) = \e^{2 t} \cos(\pi x),
  \qquad
  u(t,x) = \e^{t} x \sin(\pi x),
\end{equation}
in the domain $[0, 1]$ for $t \in [0, 1]$. Results of a convergence study
for the semidiscretization \eqref{eq:bbm_bbm-dir-reflecting-SBP}
are shown in Figure~\ref{fig:bbm_bbm-reflecting-convergence}.

\begin{figure}[htbp]
\centering
  \begin{subfigure}{0.49\textwidth}
  \centering
    \includegraphics[width=\textwidth]{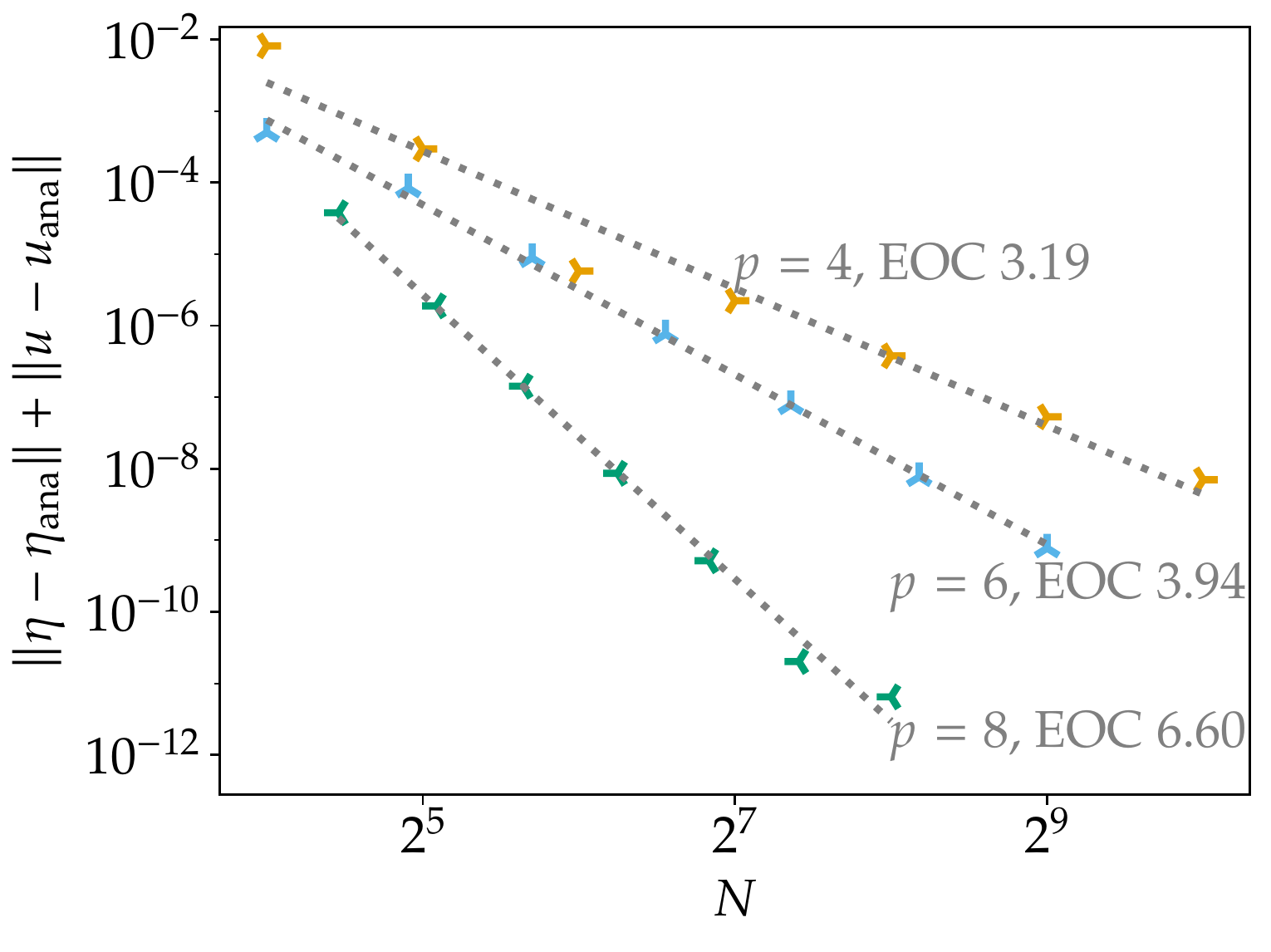}
    \caption{Accurate FD operators of \cite{mattsson2018boundary}, even $N$.}
  \end{subfigure}%
  \hspace*{\fill}
  \begin{subfigure}{0.49\textwidth}
  \centering
    \includegraphics[width=\textwidth]{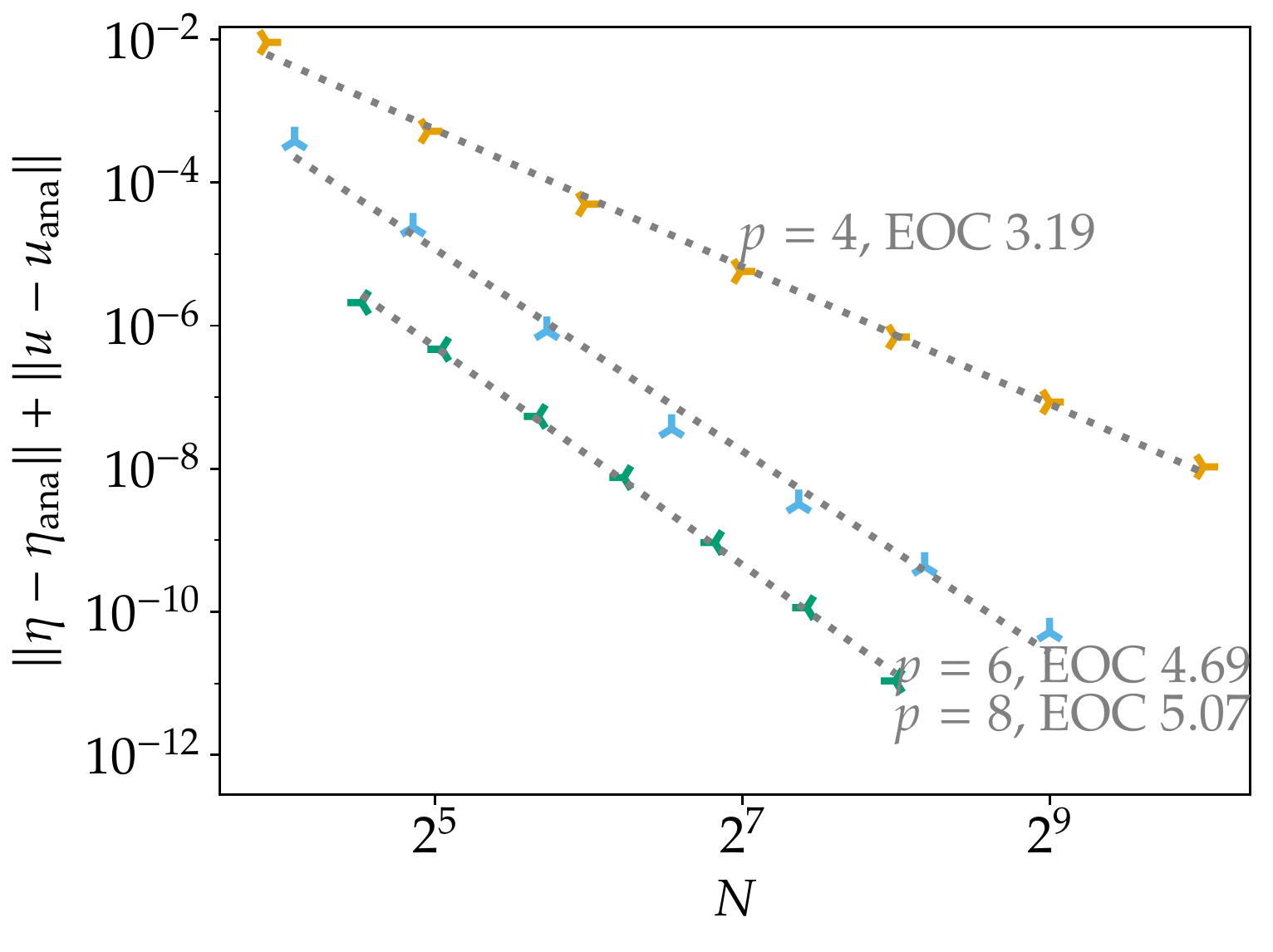}
    \caption{Accurate FD operators of \cite{mattsson2018boundary}, odd $N$.}
  \end{subfigure}%
  \\
  \begin{subfigure}{0.8\textwidth}
  \centering
    \includegraphics[width=\textwidth]{figures/Galerkin_legend_p1_p6}
  \end{subfigure}%
  \\
  \begin{subfigure}{0.49\textwidth}
  \centering
    \includegraphics[width=\textwidth]{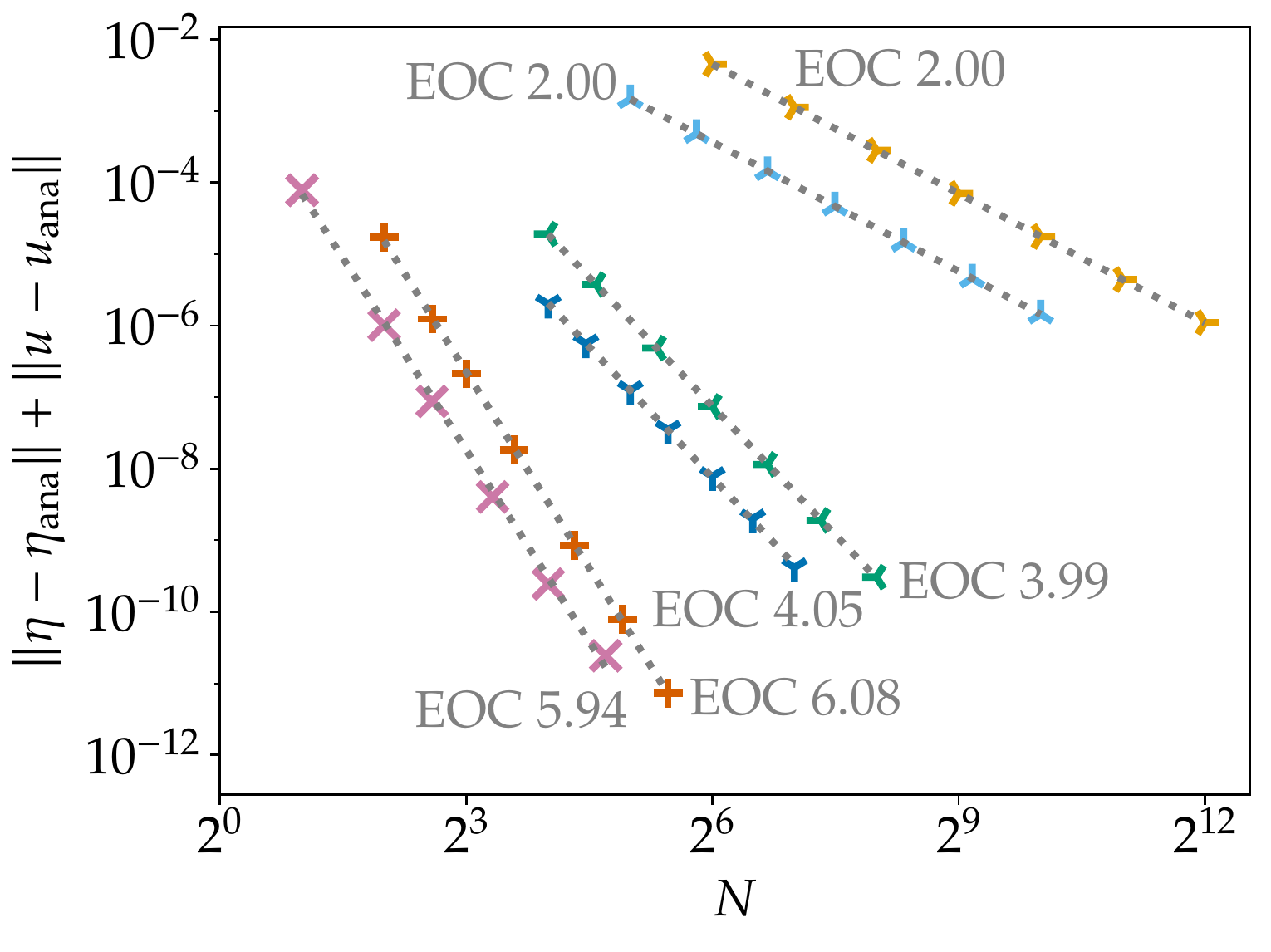}
    \caption{Continuous Galerkin methods, even $N$.}
  \end{subfigure}%
  \hspace*{\fill}
  \begin{subfigure}{0.49\textwidth}
  \centering
    \includegraphics[width=\textwidth]{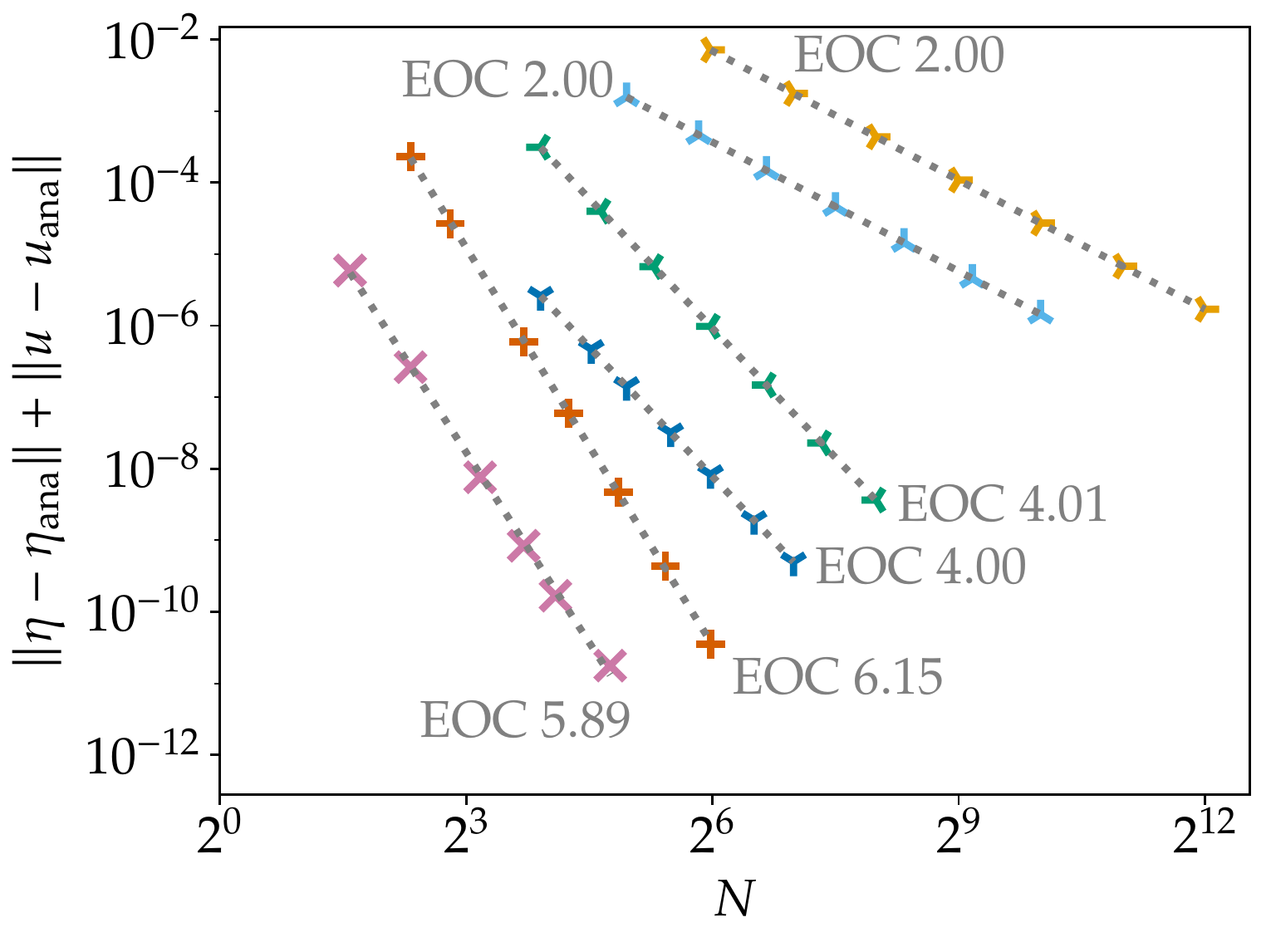}
    \caption{Continuous Galerkin methods, odd $N$.}
  \end{subfigure}%
  \\
  \begin{subfigure}{0.49\textwidth}
  \centering
    \includegraphics[width=\textwidth]{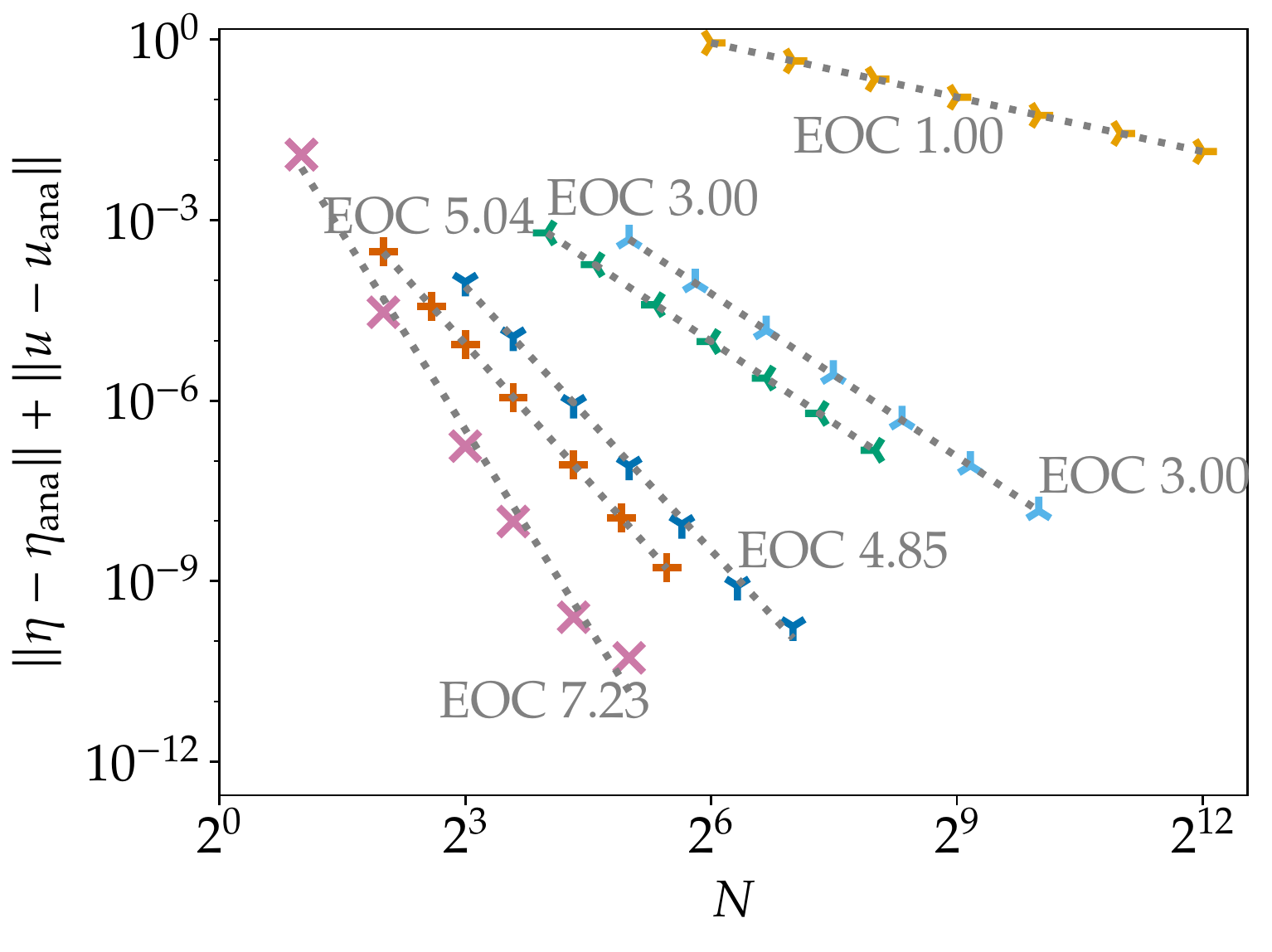}
    \caption{Discontinuous Galerkin methods, even $N$.}
  \end{subfigure}%
  \hspace*{\fill}
  \begin{subfigure}{0.49\textwidth}
  \centering
    \includegraphics[width=\textwidth]{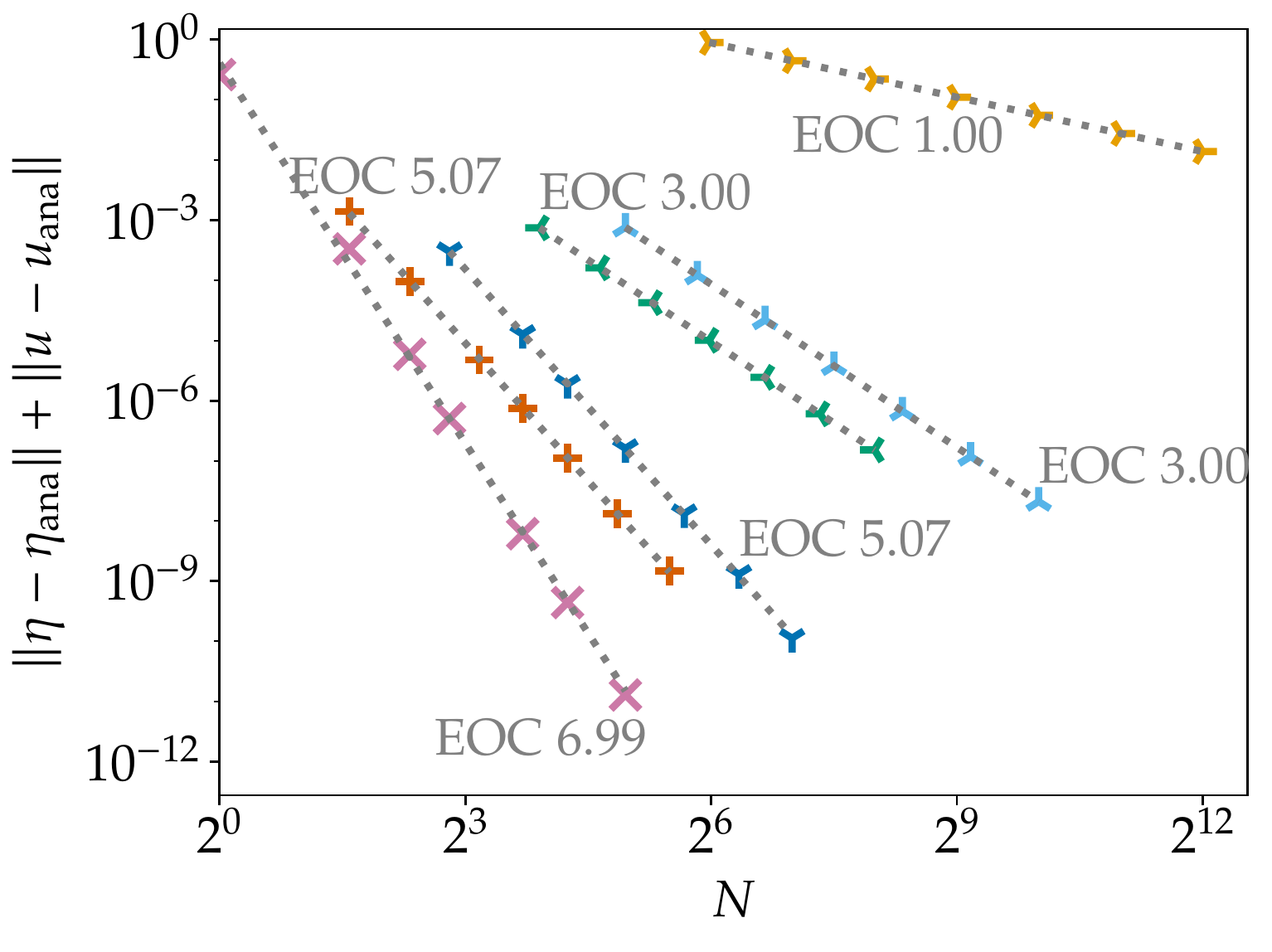}
    \caption{Discontinuous Galerkin methods, odd $N$.}
  \end{subfigure}%
  \caption{Convergence results of the spatial semidiscretizations
           \eqref{eq:bbm_bbm-dir-reflecting-SBP} for the manufactured
           solution \eqref{eq:bbm_bbm-reflecting-manufactured}
           of the BBM-BBM equation \eqref{eq:bbm_bbm-dir-reflecting}
           with reflecting boundary conditions.}
  \label{fig:bbm_bbm-reflecting-convergence}
\end{figure}

The FD methods can be expected to converge with an
$\text{EOC} \approx \nicefrac{p}{2} + 1$ because of the order of accuracy
$\nicefrac{p}{2}$ near the boundary.
However, there is a clear influence of the parity of the number of nodes $N$
for some operators:
For $p = 6$, odd $N$ yield more than an order of magnitude smaller errors and
a slightly bigger EOC. For $p = 8$, the behavior for even and odd $N$ is
the other way round. For $p = 4$, there is no significant influence.
In particular, it is noteworthy that using one node less or more can result in
an increase or decrease of the order of magnitude of the error.

For continuous Galerkin methods with wide-stencil operator $\D2 = \D1^2$,
$\text{EOC} \approx p+1$ for odd $p$ and $\text{EOC} \approx p$ for even $p$
as for periodic BCs.
There is a significant influence of the parity of the number of elements $N$
for the odd polynomial degrees $p \in \set{3, 5}$. An even number of elements
can reduce the error by an order of magnitude.

Finally, discontinuous Galerkin methods reproduce the $\text{EOC} \approx p+1$
for even $p$ and $\text{EOC} \approx p$ for odd $p$ as for periodic BCs and
wide-stencil second-derivative operators.
The influence of the parity of the number of elements is much less pronounced
than for CG methods.

\subsubsection{Convergence study for long-time simulations of traveling waves}
\label{sec:bbm_bbm-convergence-spacetime}

For long time simulations,
structure-preserving methods such as the conservative semidiscretizations
\eqref{eq:bbm_bbm-inv-periodic-SBP} with wide-stencil operators $\D2 = \D1^2$
coupled with relaxation methods in time can yield both qualitative and
quantitative improvements over standard methods. To demonstrate this, we consider
the traveling wave solutions
described in Remark~\ref{rem:bbm_bbm-traveling-wave}
with a final time $t = \num{7500}$, corresponding to $50$ periods.

In the following, CG methods with $p = 4$ and DG methods with $p = 3$ are used.
The sixth-order accurate Runge-Kutta method of \cite{verner2010numerically}
is used with a time step $\dt \propto \dx$ for a space-time convergence study.
The standard schemes use the narrow-stencil second-derivative operators and
the baseline time integration method. For the conservative schemes, the
wide-stencil second-derivative operators $\D2 = \D1^2$ are combined with
relaxation in time.

As can be seen in Figure~\ref{fig:bbm_bbm-convergence-spacetime}, the EOC of
the standard methods is higher but the absolute error of the conservative
methods is still lower for the applied range of parameters. Increasing the
resolution even further, it can be expected that the error will be smaller for
the standard methods. On the other hand, the conservative methods will probably
still result in a smaller error for even longer simulation times. Moreover,
they are more efficient at reasonably small error tolerances.

\begin{figure}[htbp]
\centering
  \begin{subfigure}{0.49\textwidth}
  \centering
    \includegraphics[width=\textwidth]{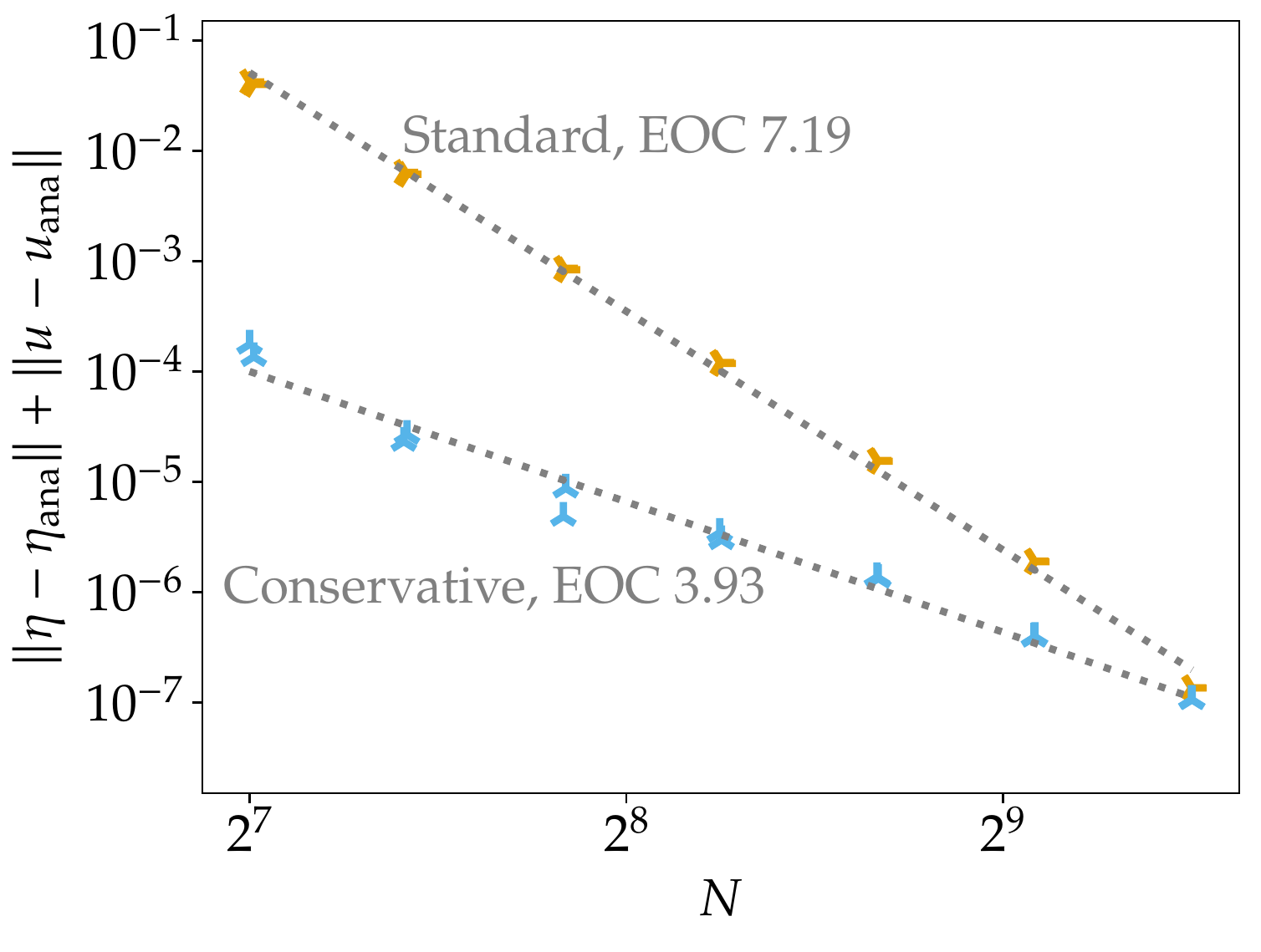}
    \caption{Continuous Galerkin methods, $p = 4$.}
  \end{subfigure}%
  \hspace*{\fill}
  \begin{subfigure}{0.49\textwidth}
  \centering
    \includegraphics[width=\textwidth]{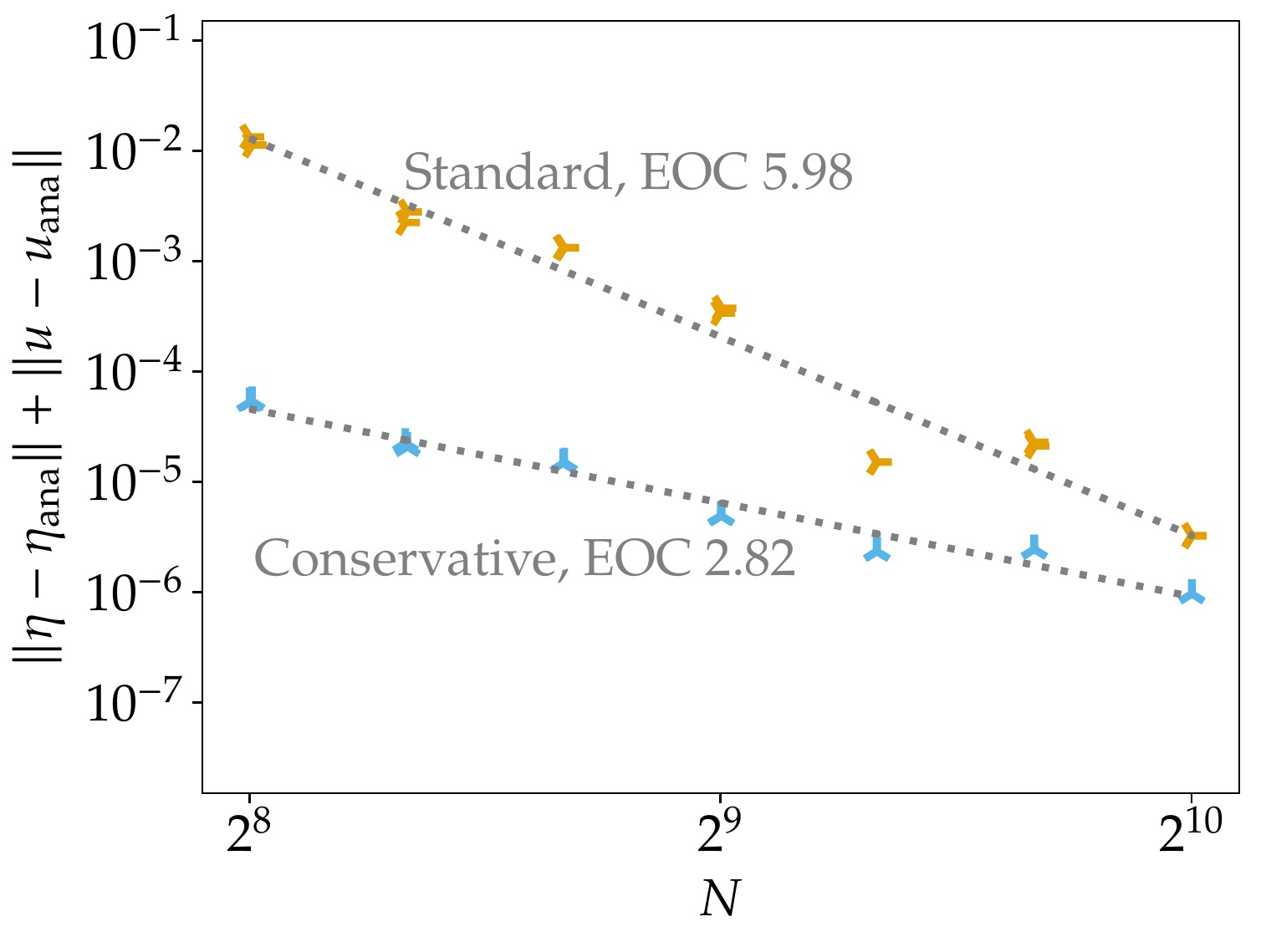}
    \caption{Discontinuous Galerkin methods, $p = 3$.}
  \end{subfigure}%
  \caption{Convergence results of the spatial semidiscretizations
           \eqref{eq:bbm_bbm-inv-periodic-SBP} for a long time simulation
           of a traveling wave solution of the BBM-BBM equation
           \eqref{eq:bbm_bbm-dir-periodic}.}
  \label{fig:bbm_bbm-convergence-spacetime}
\end{figure}

\subsubsection{Conservation of invariants for reflecting boundary conditions}
\label{sec:bbm_bbm-reflecting}

To test the method for reflecting boundary conditions, the traveling wave
initial condition described in Remark~\ref{rem:bbm_bbm-traveling-wave}
is used.
For SBP finite difference methods with interior order of accuracy $p = 6$,
results are visualized in Figure~\ref{fig:bbm_bbm-reflecting}. The classical
operators of \cite{mattsson2004summation} result in undesired oscillations of
small amplitude at the final time $t = 3050$ which vanish under grid refinement.
These are generated by the interaction of the wave with the boundary and
can also be reduced significantly by applying operators with improved accuracy
near the boundary, \eg the ones of \cite{mattsson2014optimal} or
\cite{mattsson2018boundary}.

Again, applying relaxation to conserve the energy improves the accuracy of
the fully-discrete methods. Choosing $\dt = 0.1$ for the accurate operators
of \cite{mattsson2018boundary}, the solutions with and without relaxation are
visually indistinguishable and further refinement causes no visible change.
Increasing the time step by a factor of ten, the solution of the baseline method is
clearly distorted with changes of the amplitude and phase while the relaxation
solution is barely affected.

\begin{figure}[htb]
\centering
  \begin{subfigure}{0.5\textwidth}
  \centering
    \includegraphics[width=\textwidth]{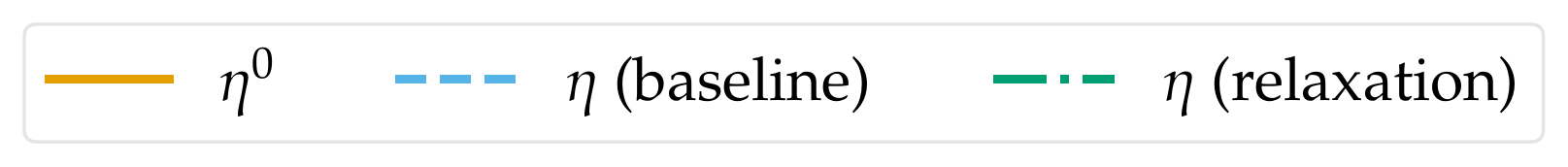}
  \end{subfigure}%
  \\
  \begin{subfigure}{0.33\textwidth}
  \centering
  \captionsetup{width=0.8\textwidth}
    \includegraphics[width=\textwidth]{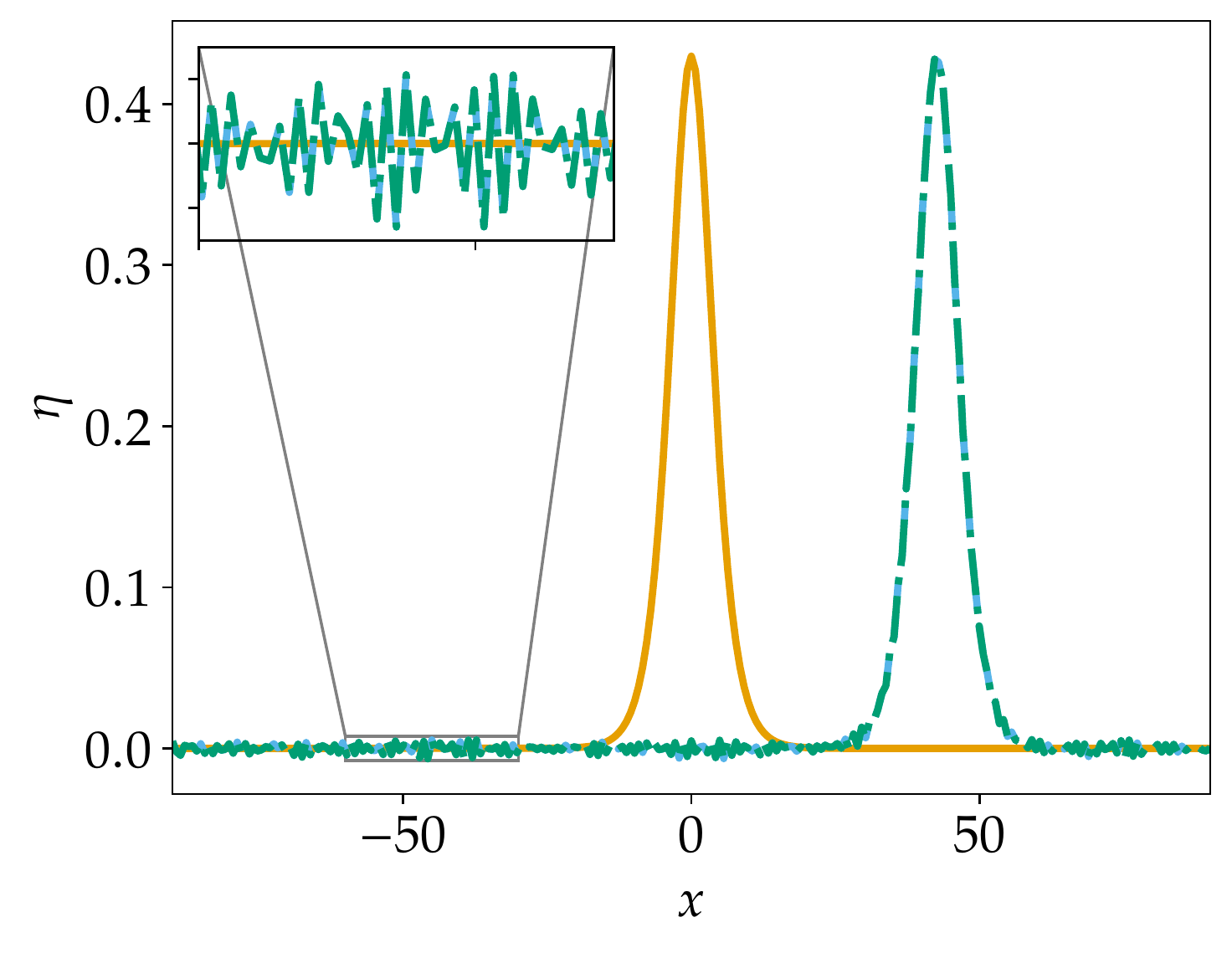}
    \caption{Classical operator of \cite{mattsson2004summation},
             $\dt = 0.1$.}
  \end{subfigure}%
  \hspace*{\fill}
  \begin{subfigure}{0.33\textwidth}
  \centering
  \captionsetup{width=0.8\textwidth}
    \includegraphics[width=\textwidth]{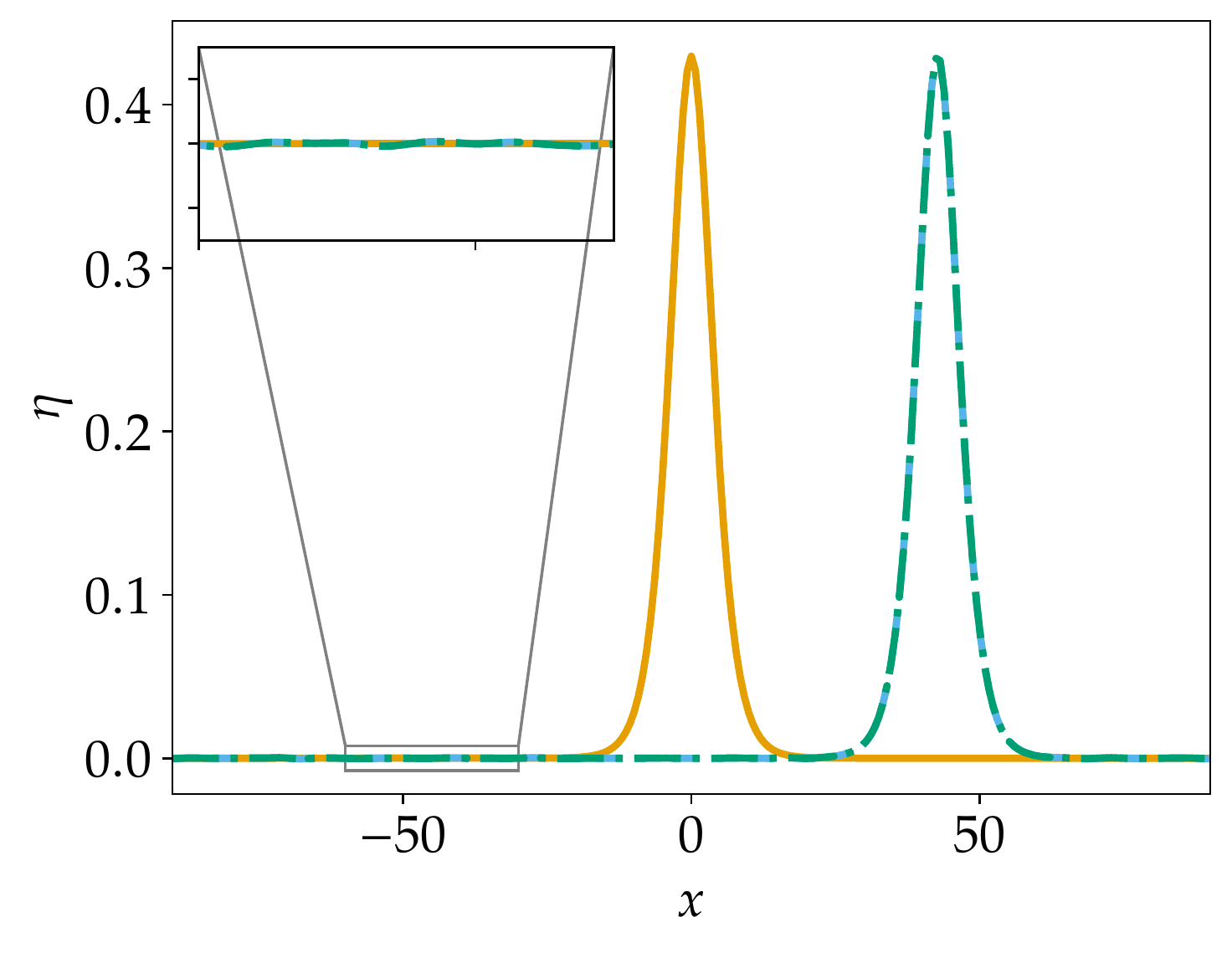}
    \caption{Accurate operator of \cite{mattsson2018boundary},
             $\dt = 0.1$.}
  \end{subfigure}%
  \hspace*{\fill}
  \begin{subfigure}{0.33\textwidth}
  \centering
  \captionsetup{width=0.8\textwidth}
    \includegraphics[width=\textwidth]{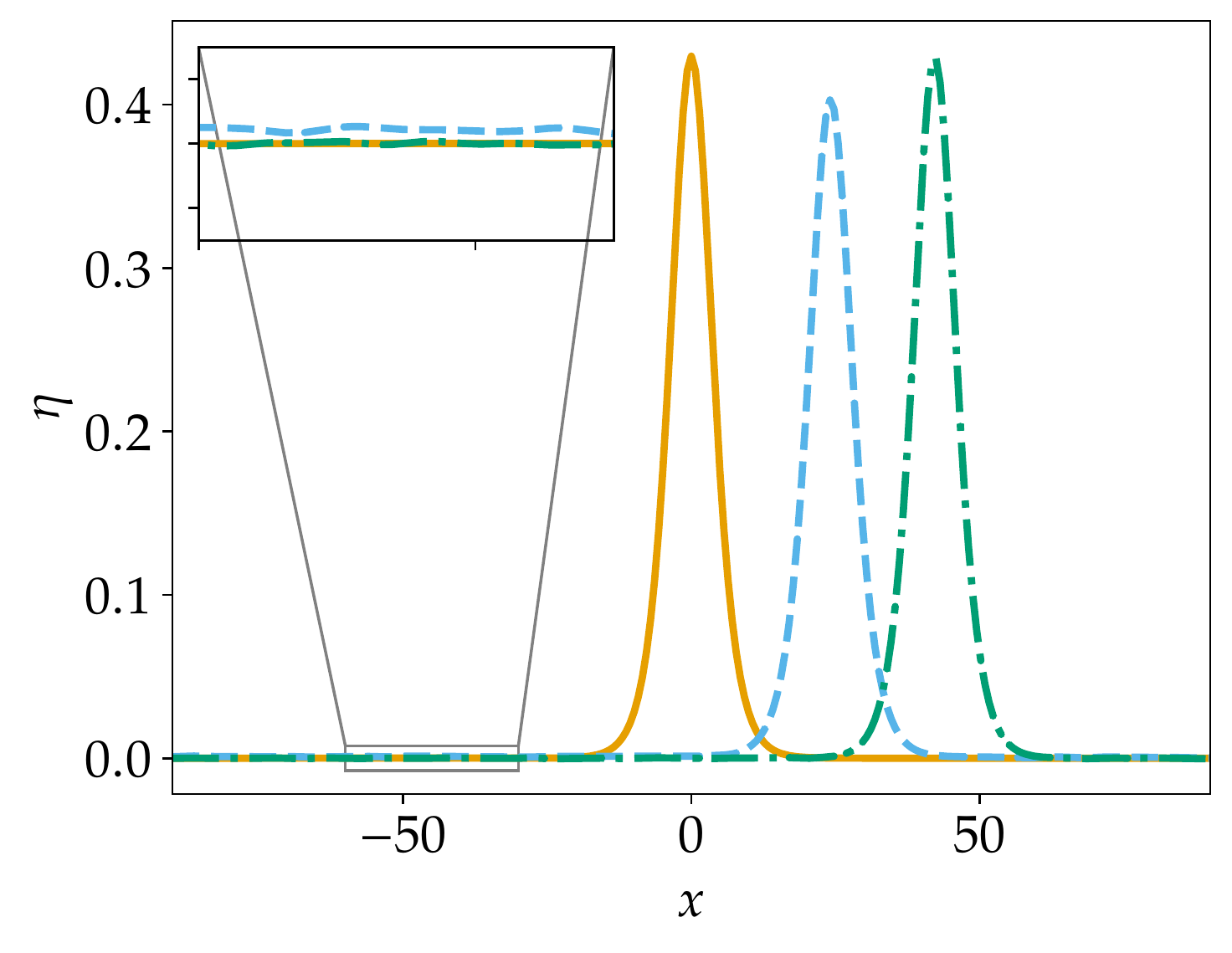}
    \caption{Accurate operator of \cite{mattsson2018boundary},
             $\dt = 1.0$.}
  \end{subfigure}%
  \\
  \begin{subfigure}{0.8\textwidth}
  \centering
    \includegraphics[width=\textwidth]{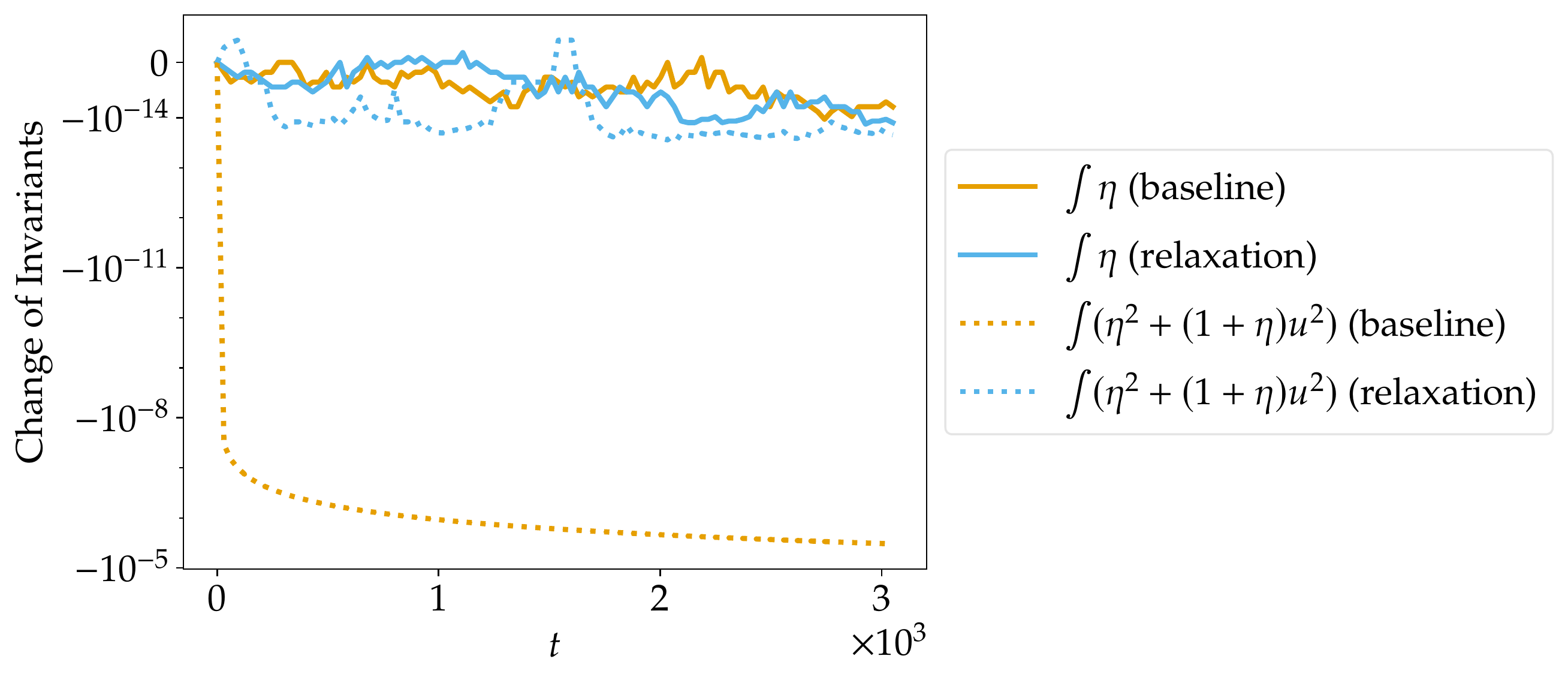}
    \caption{Change of the invariants \eqref{eq:bbm_bbm-invariants} for the
             accurate operator of \cite{mattsson2018boundary} with
             $\dt = 0.1$.}
  \end{subfigure}%
  \caption{Numerical solutions of the BBM-BBM system with reflecting boundary
           conditions \eqref{eq:bbm_bbm-dir-reflecting} obtained by finite
           difference methods and RK4 with and without relaxation to preserve
           the energy \eqref{eq:bbm_bbm-invariants-energy}.}
  \label{fig:bbm_bbm-reflecting}
\end{figure}

\section{Summary and conclusions}
\label{sec:summary}

We have further developed general tools to construct conservative methods
and applied them to a broad range of dispersive wave
equation models. These fully-discrete schemes combine summation by parts
operators in space with relaxation methods in time to conserve all linear
and one nonlinear invariant for each model. Because of the generality of the
SBP framework, the conservation properties of four different classes
of schemes, namely Fourier collocation, finite difference, continuous Galerkin,
and discontinuous Galerkin methods, can be analyzed simultaneously for six
different dispersive wave models studied in this article. Instead of
requiring $4 \cdot 6 = 24$ ad hoc approaches, we have established general results
first, allowing a unified analysis and relatively simple proofs of conservation
for all methods.  The proposed schemes do not require exact integration
(as long as the mass matrix is diagonal) and use time discretizations that
are explicit except for the solution of a scalar equation at each step.

While the application of the relaxation approach is straightforward given
the established results, the construction of conservative spatial
semidiscretizations requires some tuning for each model. Nevertheless,
the only required techniques are the application of split forms and the
special choice of higher-derivative operators. In bounded domains, the
carefully developed imposition of boundary conditions is also crucial.

Having developed a broad framework of conservative numerical methods for
dispersive wave equations, the present work will be extended in the future.
While the analysis of conservation properties for different classes of
methods can be conducted in the unifying SBP framework, the detailed study
of error estimates still seems to require specializations on the schemes
and does not fit into this manuscript.  Estimates of the order
of convergence resulting from numerical experiments are summarized in
Table~\ref{tab:summary}.

As alluded to in the introduction, conservative fully-discrete numerical
methods can have improved properties concerning the error growth in time.
This is related to the results shown in Section~\ref{sec:bbm_bbm-convergence-spacetime},
where standard numerical methods can have a higher order of convergence but
result still in bigger errors than conservative numerical methods. We will
focus on these aspects in the future.

\begin{table}[!htb]
\centering
  \caption{Summary of the experimental order of convergence (EOC)
           and conservation properties (Cons.) for various semidiscretizations
           of the dispersive wave equations considered in this manuscript.
           The methods conserve either the linear invariant(s) only (lin.),
           a chosen nonlinear invariant only (nonl.), or both.
           For FD methods, $p$ is the interior order of accuracy.
           For CG and DG methods, $p$ is the polynomial degree.
           If no EOC is given, the results of numerical experiments were
           not clear enough.}
  \label{tab:summary}
  \begin{minipage}{\textwidth}
  \begin{tabular*}{\linewidth}{@{\extracolsep{\fill}}cccclclcl@{}}
    \toprule
    \multicolumn{3}{c}{Method}
      & \multicolumn{2}{c}{BBM \eqref{eq:bbm-inv-periodic-SBP}}
      & \multicolumn{2}{c}{FW \eqref{eq:fw-inv-periodic-SBP}}
      & \multicolumn{2}{c}{CH \eqref{eq:ch-inv-periodic-SBP}, $\alpha = \nicefrac{1}{2}$} \\
    Class & $\D2$ stencil & $p$ & Cons. & EOC & Cons. & EOC & Cons. & EOC \\
    \midrule
      FD &                &          & both  & $\approx p$
                                     & both  & $\in [p-\nicefrac{1}{2}, p]$
                                     & both  & $\approx p$ \\
      \cmidrule{2-9}
      CG &  wide          &  odd $p$ & both  & $\approx p+1$
                                     & both  & $\in [p+\nicefrac{1}{2}, p+1]$
                                     & both  & $\approx p+1$ \\
         &                & even $p$ &       & $\approx p$
                                     &       & $\approx p+1$
                                     &       & $\approx p$ \\
         & narrow         &  odd $p$ & both  & $\approx p+2$\textsuperscript{a}
                                     & lin.  & $\in [p+\nicefrac{1}{2}, p+1]$
                                     & both  & $\approx p+1$ \\
         &                & even $p$ &       & $\approx p+2$
                                     &       & $\approx p+1$
                                     &       & $\approx p$ \\
      \cmidrule{2-9}
      DG &  wide          &  odd $p$ & both  & $\approx p$
                                     & both  & $\approx p$
                                     & both  & $\approx p$ \\
         &                & even $p$ &       & $\approx p+1$
                                     &       & $\approx p+1$
                                     &       & $\approx p+1$ \\
         & narrow         &  odd $p$ & both  & $\approx p+1$
                                     & lin.  & $\approx p$
                                     & both  & $\approx p$\textsuperscript{b} \\
         &                & even $p$ &       & $\approx p+1$
                                     &       & $\approx p+1$
                                     &       & $\approx p+1$ \\
    \bottomrule
  \end{tabular*}
  \begin{tabular*}{\linewidth}{@{\extracolsep{\fill}}cccclclcl@{}}
    \toprule
    \multicolumn{3}{c}{Method}
      & \multicolumn{2}{c}{DP \eqref{eq:dp-inv-periodic-SBP}}
      & \multicolumn{2}{c}{HH \eqref{eq:hh-inv-periodic-SBP}}
      & \multicolumn{2}{c}{BBM-BBM \eqref{eq:bbm_bbm-inv-periodic-SBP}} \\
    Class & $\D2$ stencil & $p$ & Cons. & EOC & Cons. & EOC & Cons. & EOC \\
    \midrule
      FD &                &          & both  & $\in [p-\nicefrac{1}{2}, p]$
                                     & both  & $\approx p$
                                     & both  & $\approx p$ \\
      \cmidrule{2-9}
      CG &  wide  &  odd $p$ & both  & $\in [p+\nicefrac{1}{2}, p+1]$
                                     & both  & $\approx p+1$
                                     & both  & $\approx p+1$ \\
         &                & even $p$ &       & $\approx p$
                                     &       & $\approx p$
                                     &       & $\approx p$ \\
         & narrow &  odd $p$ & both  & $\in [p+\nicefrac{1}{2}, p+1]$
                                     & nonl. &
                                     & lin.  & $\approx p+2$\textsuperscript{a} \\
         &                & even $p$ &       & $\approx p$
                                     &       &
                                     &       & $\approx p+2$ \\
      \cmidrule{2-9}
      DG &  wide  &  odd $p$ & both  & $\approx p$
                                     & both  &
                                     & both  & $\approx p$ \\
         &                & even $p$ &       & $\approx p+1$
                                     &       &
                                     &       & $\approx p+1$ \\
         & narrow &  odd $p$ & both  & $\approx p$
                                     & nonl. &
                                     & lin.  & $\approx p+1$ \\
         &                & even $p$ &       & $\approx p+1$
                                     &       &
                                     &       & $\approx p+1$ \\
    \bottomrule
  \end{tabular*}
  \end{minipage}
\begin{flushleft}
  \small\textsuperscript{a}$p+1=2$ for $p=1$.

  \small\textsuperscript{b}For $p = 1$ and $\alpha = \nicefrac{1}{2}$, this DG method does not converge for the manufactured solution. However, it converges for other $p$, other $\alpha$ such as $\alpha = 1$, and a traveling wave solution.
\end{flushleft}
\end{table}

\appendix
\section*{Acknowledgments}

Research reported in this publication was supported by the
King Abdullah University of Science and Technology (KAUST).
DM expresses his gratitude to KAUST for the hospitality and financial
support especially during his last visit where he met DK and HR.

\printbibliography

\end{document}